\newtheorem{definition}{Definition}[section]
\newtheorem{corollary}[definition]{Corollary}
\newtheorem{proposition}[definition]{Proposition}
\newtheorem{theorem}[definition]{Theorem}
\newtheorem{lemma}[definition]{Lemma}
\newtheorem{remark}[definition]{Remark}
\newtheorem{assumption}{Assumption}[section]
\date{}
\begin{document}
\baselineskip 18pt
\title[Randomized iterative methods for GAVE ]{Randomized iterative methods for generalized absolute value equations: Solvability and error bounds}
	
	\author{Jiaxin Xie}
	\address{LMIB of the Ministry of Education, School of Mathematical Sciences, Beihang University, Beijing, 100191, China. }
	\email{xiejx@buaa.edu.cn}

   \author{Hou-Duo Qi}
   \address{Department of Data Science and Artificial Intelligence and Department of Applied Mathematics, The Hong Kong Polytechnic University, Hung Hom, Kowloon, Hong Kong }
   \email{houduo.qi@polyu.edu.hk}

	\author{Deren Han}
	\address{LMIB of the Ministry of Education, School of Mathematical Sciences, Beihang University, Beijing, 100191, China. }
	\email{handr@buaa.edu.cn}

	\maketitle
	
	\begin{abstract}
	Randomized iterative methods, such as the Kaczmarz method and its variants, have gained growing attention due to their simplicity and efficiency in solving large-scale linear systems. Meanwhile, absolute value equations (AVE) have attracted increasing interest due to their connection with the linear complementarity problem. In this paper, we investigate the application of randomized iterative methods to generalized AVE (GAVE). Our approach differs from most existing works in that we tackle GAVE with non-square coefficient matrices. We establish more  comprehensive sufficient and necessary conditions for characterizing  the solvability of GAVE and propose precise error bound conditions. Furthermore, we introduce a flexible and efficient randomized iterative algorithmic framework for solving GAVE, which employs randomized sketching 
	matrices drawn from user-specified distributions. This framework is capable of encompassing many well-known methods,  including  the Picard iteration method and the randomized Kaczmarz method. Leveraging our findings on solvability and error bounds, we establish both almost sure convergence and linear convergence rates for this versatile algorithmic framework. Finally, we present numerical examples to illustrate the advantages of the new algorithms.
	\end{abstract}
	\let\thefootnote\relax\footnotetext{Key words: absolute value equations, solvability, error bound, stochastic methods, Kaczmarz method, convergence rate}

\let\thefootnote\relax\footnotetext{Mathematics subject classification (2020): 15A06, 68W20, 90C30, 90C33 }

\section{Introduction}

Absolute value equations (AVE) have recently become an active research area due to their relevance to many different fields, including the linear complementarity problem (LCP) \cite{rohn2004theorem}, biometrics \cite{dang2022avet}, game theory \cite{zhou2023semidefinite}, etc.  We consider the following generalized AVE (GAVE)
\begin{equation}\label{GAVE}
	Ax-B|x|=b,
\end{equation}
where  $ A,B\in\mathbb{R}^{m\times n}, b\in\mathbb{R}^m$,  $|x|=(|x_1|,\ldots,|x_n|)^\top$, and  $\top$ denotes the transpose.  
Specifically, when $m=n$ and $B$ is the identity matrix, the GAVE \eqref{GAVE} reduces to the standard AVE \cite{rohn2004theorem}, which is closely related to the LCP, as discussed in detail in Appendix  \ref{sec:appd2}. When $B=0$, the GAVE \eqref{GAVE} becomes the system of linear equations. Over the past two decades, numerous works have examined the AVE problem from various perspectives, such as solvability \cite{wu2020note,wu2021unique,mangasarian2006absolute,hladik2023properties}, error bounds \cite{zamani2023error}, and the development of iterative methods \cite{mangasarian2009generalized,alcantara2023method,chen2023optimal,chen2021inverse,chen2023exact,rohn2014iterative,mangasarian2007absolute}. While most of these works primarily focus on cases where $m=n$, this paper, however, addresses the problem of non-square matrices.

In particular, we present a generic randomized iterative algorithmic framework for solving the  GAVE \eqref{GAVE}.  Starting from any initial point $x^0$, it iterates with the format
\begin{equation}\label{AF-GAVE} 
	x^{k+1}=x^k-\alpha  \frac{A^\top S_kS_k^\top (Ax^k-B|x^k|-b)}{\|S_k^\top A\|^2_2},
\end{equation}
where $\alpha>0$ is the stepsize, $S_k\in\mathbb{R}^{m\times q}$ is a randomized sketching matrix drawn from a user-defined probability space $(\Omega, \mathcal{F}, \mathbf{P})$, and $\|\cdot\|_2$ denotes the Euclidean norm. 
A typical case of the iteration scheme \eqref{AF-GAVE} is the randomized Kaczmarz (RK) method \cite{Str09}. 
For any $ i\in\{1,2,\cdots,m\} $, let $ e_i $ denote the $ i $-th unit vector, $ A_{i, :} $ denote the $ i $-th row of $ A $, and $ b_i $ denote the $ i $-th entry of $ b $. 
Suppose that the coefficient matrix $B=0$, the stepsize $\alpha=1$, and the sampling space $ \Omega=\left\{e_i\right\}_{i=1}^m$ with $ e_i $ being sampled with probability $  \frac{\|A_{i, :}\|^2_2}{\|A\|^2_F}$. Then the iteration scheme \eqref{AF-GAVE} reduces to 
\[
x^{k+1}=x^k-\frac{A_{i_k, :}x^k-b_{i_k}}{\|A_{i_k, :}\|^2_2 }A_{i_k, :}^\top,
\]	
which is exactly the RK method. See  Section \ref{sect-examples} for further discussion on the choice of the probability space $(\Omega,\mathcal{F},\mathbf{P})$ for the recovery of existing methods and the development of new ones.

\subsection{Our contribution}
The main contributions of this work are as follows.
\begin{itemize}
	\item[1.] 
	We present a necessary and sufficient condition for the unique solvability of the GAVE \eqref{GAVE}; see Theorem \ref{thm-solvability}. To the best of our knowledge, this is the first result to characterize the unique solvability of GAVE with non-square coefficient matrices.  To improve the computational tractability of the proposed necessary and sufficient conditions,  we further explore sufficient conditions that ensure the existence and uniqueness of the solution of GAVE. Different from the results in the literature, our conditions incorporate a nonsingular matrix, which makes our criteria more comprehensive; see Theorem \ref{THM-suf}. In addition, we formulate a convex optimization problem to determine the existence of such nonsingular matrix.
	\item[2.] We establish error bounds for the GAVE \eqref{GAVE}, inspired by the recent work of Zamani and Hlad\'{\i}k \cite{zamani2023error} on the standard AVE. Our results extend beyond the scope of their work. Not only do we propose error bounds for GAVE with non-square coefficient matrices, but our error bounds also incorporate a nonsingular matrix; see Theorem \ref{EB-full-rank}. It  greatly generalizes and enhances the results in \cite{zamani2023error}.
	\item[3.] We introduce a simple and versatile randomized iterative algorithmic framework for solving GAVE. Our general framework encompasses several known methods as special cases, including the Picard iteration method, the RK method, and its variants. This enables us to establish new connections between these methods. Additionally, the flexibility of our approach allows us to develop entirely new methods by adjusting the probability space $(\Omega,\mathcal{F},\mathbf{P})$. By leveraging the solvability and error bounds, we demonstrate both almost sure convergence and linear convergence for this general algorithmic framework. Finally, numerical examples illustrate the benefits of the new algorithms.
\end{itemize}


\subsection{Related work}

\subsubsection{Solvability}

Rohn \cite{rohn2004theorem} first introduced the alternative theorem for the unique solvability of the GAVE. This seminal work sparked a large amount  of subsequent research exploring the unique solvability conditions of the GAVE \cite{mangasarian2007absolute,mangasarian2006absolute,rohn2009unique,rohn2014iterative,wu2021unique,wu2018unique,hladik2023some}. One may refer to \cite{kumar2024characterization, Hladik2024an} for recent surveys on the solvability of the GAVE. The unique solvability of GAVE is characterized by certain conditions derived from the analysis of the interval matrix, singular values, spectral radius, and norms of the matrices $A$ and $B$ involved in the equation.  However, these conditions apply only when $A$ and $B$ are square matrices. In this paper, we delve into the unique solvability of the GAVE with non-square coefficient matrices. We establish more comprehensive conditions for characterizing the solvability of the GAVE, expanding the scope of previous research and providing a more inclusive understanding of the problem.



\subsubsection{Error bound} Error bounds play a crucial role in theoretical and numerical analysis of linear
algebra and optimization \cite{pang1997error,zhang2020new,necoara2019linear,higham2002accuracy}. They provide  a measure of the accuracy of an approximation,  helping elucidate the stability and reliability of numerical methods and algorithms. 
In \cite{wang2017numerical,wang2013verification}, the authors established the numerical validation for solutions of standard AVE ($m=n$ and $B=I$) based on the interval methods. Hlad\'{\i}k \cite{hladik2018bounds}  devised outer approximation techniques  and derived an array of bounds for the solution set of the GAVE with square coefficient matrices. 
More recently, Zamani and 
Hlad\'{\i}k \cite{zamani2023error} studied error bounds for standard AVE under the assumption that uniqueness of the solution of AVE is guaranteed. They proposed an \emph{error bound} condition that estimates the distance to the solution $x^*$ from any vector $x\in\mathbb{R}^n$ by the norm of the residual: there exists a constant $\kappa>0$ such that $\|x-x^*\|\leq \kappa\|Ax-|x|-b\|$ for all $x\in\mathbb{R}^n$, where $\|\cdot\|$ represents any vector norm.
In this paper, we extend this line of research by proposing error bounds for GAVE with non-square coefficient matrices. Furthermore, our error bounds incorporate a nonsingular matrix, which significantly broadens and enriches the results presented in \cite{zamani2023error}.

\subsubsection{Existing methods for GAVE}
In recent years, numerous algorithms have been developed to solve the GAVE with square coefficient matrices. Generally, these algorithms fall into four categories: Newton methods \cite{chen2021non,mangasarian2009generalized,caccetta2011globally,bello2016global,haghani2015generalized,hu2011generalized}, Picard iteration methods \cite{rohn2014iterative,salkuyeh2014picard,chen2023exact}, splitting iteration method \cite{ke2017sor,edalatpour2017generalization,chen2023exact}, and concave minimization approach \cite{mangasarian2007absolutec,abdallah2018solving}. For some further comments, please refer to \cite{alcantara2023method, Hladik2024an}. Nevertheless, to our knowledge, there are merely two methods that can handle the GAVE with non-square coefficient matrices. The first is the successive linearization algorithm via concave minimization proposed in \cite{mangasarian2007absolute}. The second is the method of alternating projections (MAP) proposed in \cite{alcantara2023method}. 
See Sections \ref{sect-examples} and \ref{sect:6} for more details and discussion of these methods. Our numerical results illustrate that, in comparison to these existing methods, our approach significantly outperforms in resolving non-square GAVE problems.

\subsubsection{Stochastic algorithms}
Stochastic algorithms, such as the RK method or  the stochastic gradient descent (SGD) method \cite{robbins1951stochastic},  have gained popularity recently due to their small memory footprint and good theoretical guarantees \cite{needell2014stochasticMP,Str09,robbins1951stochastic}.
The Kaczmarz method \cite{Kac37}, also known as the algebraic reconstruction technique (ART) \cite{herman1993algebraic,gordon1970algebraic}, is a classical iterative algorithm used to solve the large-scale linear system of equations. The method alternates between choosing a row of the system and updating the solution based on the projection onto the hyperplane defined by that row.
In the seminal paper \cite{Str09}, Strohmer and Vershynin studied the RK method and proved that if the linear system is consistent, then RK converges linearly in expectation. Since then, a large amount of work has been developed on Kaczmarz-type methods, including accelerated RK methods \cite{liu2016accelerated,han2022pseudoinverse,loizou2020momentum}, randomized block Kaczmarz methods \cite{necoara2019faster,needell2013two,needell2014paved,moorman2021randomized,Gow15}, greedy RK methods \cite{Bai18Gre,Gow19}, randomized sparse Kaczmarz methods \cite{schopfer2019linear,chen2021regularized}, etc. 

In fact, the RK method can be seen as a variant of the SGD method \cite{Han2022-xh,needell2014stochasticMP,ma2017stochastic,zeng2022randomized,jia2024stochastic}.
The SGD method aims to minimize a separable objective function $f(x)=\frac{1}{m}\sum_{i=1}^mf_i(x)$ by stochastically accessing selected components of the objective and taking a gradient step for that component. That is, SGD employs the update rule 
$$x^{k+1}=x^k-\alpha_k\nabla f_{i_k}(x^k),$$
where $\alpha_k$ is the step-size and $i_k$ is selected randomly. This approach allows SGD to make progress towards the minimum of the function using only a subset of the gradient information at each step, which can be computationally advantageous, especially for large-scale problems. When the objective function $f(x)=\frac{1}{2m}\|Ax-b\|^2_2=\frac{1}{2m}\sum_{i=1}^{m}(A_{i,:}x-b_i)^2$, then SGD reduces to the RK method.
Furthermore, the randomized iterative method proposed in this paper can also be  seen as an application of SGD. By incorporating information from the previous iteration, we formulate a stochastic optimization problem and then solve it by using a single-step of SGD; see Section \ref{sect:4} for further discussion. 

Finally, we note that the \emph{iterative sketching} paradigm \cite{Gow15,pilanci2016iterative,derezinski2024recent,dobriban2019asymptotics} is one of the most exciting recent developments in randomized numerical linear algebra for solving large-scale problems. The core idea is to compress a large matrix or dataset into a smaller representation and iteratively refine the solution by solving a sequence of sketched subproblems, rather than addressing the original problem directly. One may refer to \cite{derezinski2024recent} for a detailed description of iterative sketching. In fact, our algorithmic framework \eqref{AF-GAVE} can also be viewed as an implementation of the iterative sketching.



\subsection{Organization}

The remainder of the paper is organized as follows. After introducing some notations and preliminaries in Section $2$, we study the solvability and error bounds in Section $3$. In Section 4, we present the randomized iterative algorithmic framework for solving GAVE and establish its convergence. In Section 5, we mention that by selecting the probability space $(\Omega,\mathcal{F},\mathbf{P})$, the algorithmic framework can recover several existing methods as well as obtain new methods.  In Section 6, 
we perform some numerical experiments to show the effectiveness of the proposed algorithms. We conclude the paper in Section 7. Proofs of the theorems involving lengthy derivations, as well as a discussion of the relationship between the AVE and the LCP, are provided in the appendices.

\section{Basic definitions and Preliminaries}

\subsection{Basic definitions}
For any random variables $\xi_1$ and $\xi_2$, we use $\mathbb{E}[\xi_1]$ and $\mathbb{E}[\xi_1\lvert \xi_2]$ to denote the expectation of $\xi_1$ and the conditional expectation of $\xi_1$ given $\xi_2$. For vector $x\in\mathbb{R}^n$, we use $x_i,x^\top$, and $\|x\|_2$ to denote the $i$-th entry, the transpose, and the Euclidean norm of $x$, respectively. For $d\in\mathbb{R}^n$, $\text{diag}(d)$ stands for the diagonal matrix whose entries on the diagonal are the components of $d$.  

Let  $A$ and $B$ be $m\times n$ matrices.
We use $A_{i,j}$, $A_{i,:}$, $A_{:,j}$, $A^\top$, $A^\dag$, $\|A\|_F$, and $\text{Range}(A)$ to denote the $(i,j)$-th component, the $i$-th row, the $j$-th column, the transpose, the Moore-Penrose pseudoinverse, the Frobenius norm, and the column space of $A$, respectively. If $A$ is nonsingular, then $A^\dagger=A^{-1}$. The singular values of $A$ are $\sigma_1(A)\geq\sigma_2(A)\geq\ldots\geq\sigma_{\min\{m,n\}}(A)\geq0$. Given $\mathcal{J} \subseteq [m]:=\{1,\ldots,m\}$, the cardinality of the set $\mathcal{J} $ is denoted by $\operatorname{card}(\mathcal{J})$ and the complementary set of $\mathcal{J} $ is denoted by $\mathcal{J} ^c$, i.e. $\mathcal{J} ^c=[m]\setminus \mathcal{J}$. We use $A_{\mathcal{J} ,:}$ and $A_{:,\mathcal{J} }$ to denote the row and column submatrix indexed by $\mathcal{J} $, respectively.  The matrix inequality $A \leq B$ is  understood entrywise and the interval matrix $[A,B]$ is defined as $[A,B]:= \{C \mid A \leq C \leq B\}$. We use $I_n\in\mathbb{R}^{n\times n}$ to denote the identity matrix.  

A symmetric matrix $M\in\mathbb{R}^{n\times n}$ is said to be positive semidefinite if $x^\top M x\geq 0$ holds for any
$x\in\mathbb{R}^n$, and  $M$ is positive definite if $x^\top M x> 0$ holds for any nonzero $x\in\mathbb{R}^n$. We use $\lambda_1(M)\geq \lambda_2(M)\geq\ldots\geq\lambda_{n}(M)$ to denote the eigenvalues of $M$. We can see that $\|A\|_2=\sigma_{1}(A)$, $\|A\|_F=\sqrt{\sum\limits_{i=1}^n \sigma^2_i(A)}$, and $\sigma_i(A)=\sqrt{\lambda_i(A^\top A)}$ for any $i\in[n]$.
Letting $\Lambda=\text{diag}(\lambda_1(M),\ldots,\lambda_n(M))$ and $M=U\Lambda U^\top$ denote the eigenvalue decomposition of $M$, we denote  $M^{\frac{1}{2}}=U\Lambda^{\frac{1}{2}} U^\top$ with $\Lambda^{\frac{1}{2}}=\text{diag}(\sqrt{\lambda_1(M)},\ldots,\sqrt{\lambda_n(M)})$. For any
two matrices $M$ and $N$, we write $M\succeq N$ ($M\succ N $) to represent $M-N$ is positive semidefinite (definite). For any $M\succeq0$, we define $\|x\|_{M}:=\sqrt{\langle x,Mx\rangle}=\|M^{\frac{1}{2}}x\|_2$.

We use $\mathcal{X}^*$ to denote the solution set of the GAVE \eqref{GAVE}. The generalized Jacobian matrices \cite{clarke1990optimization} are used in the presence of nonsmooth
functions. Let $F: \mathbb{R}^n\to\mathbb{R}^m$ be a locally Lipschitz function. The generalized
Jacobian of $F$ at $\hat{x}$, denoted by $\partial F(\hat{x})$, is defined as
$$
\partial F(\hat{x}):=\operatorname{co}\left(\left\{\lim_{n\to\infty}\nabla F(x_n) : x_n\to \hat{x},x_n\notin \mathcal{X}_f\right\}\right),
$$
where $\mathcal{X}_f$ is the set of points at which $f$ is not differentiable and $\operatorname{co}(\mathcal{S})$ denotes the
convex hull of a set $\mathcal{S}$.  We use $\operatorname{dist}_{\mathcal{S}}(x)$ to denote the distance from $x$ to the set $\mathcal{S}$, i.e., $\operatorname{dist}_{\mathcal{S}}(x):=\inf_{y\in\mathcal{S}}\|x-y\|_2$.

\subsection{Some useful lemmas}
In this subsection, we recall some known results that we will need later on. 
By the definition of singular value, we know that for any $P,Q\in\mathbb{R}^{m\times n}$, $\sigma^2_{i}(P^\top Q)=\lambda_i(Q^\top PP^\top Q)=\lambda_i(Q^\top(PP^\top)^{\frac{1}{2}} (PP^\top)^{\frac{1}{2}} Q)=\sigma^2_{i}((PP^\top)^{\frac{1}{2}} Q)$. Therefore, we can conclude the following lemma.
\begin{lemma}\label{singular-value-equ}
	Let $P,Q\in\mathbb{R}^{m\times n}$ and $\ell=\min\{m,n\}$. Then for any $i\in[\ell]$, $\sigma_{i}(P^\top Q)=\sigma_{i}((PP^\top)^{\frac{1}{2}} Q)$.
\end{lemma}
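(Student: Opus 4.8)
The plan is to reduce the claim to the elementary identity $\sigma_i(M)=\sqrt{\lambda_i(M^\top M)}$ recalled in the preliminaries, combined with the fact that the symmetric positive semidefinite square root of $PP^\top$ factors it as a product. First I would apply this identity to the $n\times n$ matrix $M=P^\top Q$, obtaining $\sigma_i^2(P^\top Q)=\lambda_i\big((P^\top Q)^\top(P^\top Q)\big)=\lambda_i(Q^\top PP^\top Q)$; this is valid for every $i\in[n]$, in particular for $i\in[\ell]$.

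Next I would use that $PP^\top\succeq 0$ is symmetric, so its square root $(PP^\top)^{\frac{1}{2}}$ is well defined, symmetric, and satisfies $(PP^\top)^{\frac{1}{2}}(PP^\top)^{\frac{1}{2}}=PP^\top$. Substituting this factorization and invoking symmetry of the square root gives $Q^\top PP^\top Q=Q^\top(PP^\top)^{\frac{1}{2}}(PP^\top)^{\frac{1}{2}}Q=\big((PP^\top)^{\frac{1}{2}}Q\big)^\top\big((PP^\top)^{\frac{1}{2}}Q\big)$. Applying the same singular-value/eigenvalue identity now in the reverse direction to the $m\times n$ matrix $(PP^\top)^{\frac{1}{2}}Q$ yields $\lambda_i(Q^\top PP^\top Q)=\sigma_i^2\big((PP^\top)^{\frac{1}{2}}Q\big)$ for $i\in[\ell]$.

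Chaining the two chains of equalities gives $\sigma_i^2(P^\top Q)=\sigma_i^2\big((PP^\top)^{\frac{1}{2}}Q\big)$, and taking nonnegative square roots finishes the proof. The only point deserving a moment of care is the index bookkeeping: $P^\top Q$ is $n\times n$ while $(PP^\top)^{\frac{1}{2}}Q$ is $m\times n$, and $Q^\top PP^\top Q=(P^\top Q)^\top(P^\top Q)$ has rank at most $\ell$, so its at most $\ell$ nonzero eigenvalues coincide with the squared singular values of both matrices; this is precisely why the identification of singular values is asserted only for $i\in[\ell]=[\min\{m,n\}]$. There is no genuine analytic obstacle — the statement is a short algebraic manipulation, exactly as sketched in the paragraph preceding it.
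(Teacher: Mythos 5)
Your argument is correct and is essentially the paper's own: both establish the chain $\sigma_i^2(P^\top Q)=\lambda_i(Q^\top PP^\top Q)=\lambda_i\bigl(((PP^\top)^{\frac{1}{2}}Q)^\top (PP^\top)^{\frac{1}{2}}Q\bigr)=\sigma_i^2\bigl((PP^\top)^{\frac{1}{2}}Q\bigr)$ using $\sigma_i(M)=\sqrt{\lambda_i(M^\top M)}$ and the symmetric square root of $PP^\top$, exactly as in the paragraph preceding the lemma. Your extra remark on the index bookkeeping for $i\in[\ell]$ is a harmless (and welcome) clarification but does not change the route.
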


\begin{lemma}[\cite{zhang2011matrix}]
	\label{singular-value-inequ}
	Let $P,Q\in\mathbb{R}^{m\times n}$. Then 
	$\sigma_{i}(P+Q)\geq\sigma_{i}(P)-\|Q\|_2$.
\end{lemma}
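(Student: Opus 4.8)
The plan is to derive this classical perturbation bound directly from the Courant--Fischer variational characterization of singular values. For any $M\in\mathbb{R}^{m\times n}$ and any index $i\in[\min\{m,n\}]$ we have $\sigma_i(M)=\sqrt{\lambda_i(M^\top M)}$, and applying the Courant--Fischer min--max theorem to the symmetric positive semidefinite matrix $M^\top M$ gives
\[
\sigma_i(M)=\max_{\substack{V\subseteq\mathbb{R}^n\\ \dim V=i}}\ \min_{\substack{x\in V\\ \|x\|_2=1}}\|Mx\|_2 .
\]
I would begin by recording this identity (it is immediate from the spectral theorem, or one may simply cite Courant--Fischer), since the rest of the argument is a short consequence of it.

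Next, fix $i$ and let $V_\star$ be an $i$-dimensional subspace of $\mathbb{R}^n$ attaining the outer maximum for $P$, so that $\sigma_i(P)=\min_{x\in V_\star,\,\|x\|_2=1}\|Px\|_2$. For every unit vector $x\in V_\star$, writing $Px=(P+Q)x-Qx$ and applying the triangle inequality together with $\|Qx\|_2\le\|Q\|_2\|x\|_2=\|Q\|_2$ yields $\|(P+Q)x\|_2\ge\|Px\|_2-\|Q\|_2$. Taking the minimum over unit $x\in V_\star$ on both sides, and using that subtracting a constant commutes with the minimum, we obtain
\[
\min_{\substack{x\in V_\star\\ \|x\|_2=1}}\|(P+Q)x\|_2\ \ge\ \min_{\substack{x\in V_\star\\ \|x\|_2=1}}\|Px\|_2-\|Q\|_2\ =\ \sigma_i(P)-\|Q\|_2 .
\]

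Finally, since $V_\star$ is one admissible $i$-dimensional subspace, the Courant--Fischer characterization applied to $P+Q$ gives $\sigma_i(P+Q)\ge\min_{x\in V_\star,\,\|x\|_2=1}\|(P+Q)x\|_2$, and combining this with the previous display proves $\sigma_i(P+Q)\ge\sigma_i(P)-\|Q\|_2$. (Alternatively, one can obtain the inequality in one line from Weyl's inequality for singular values, $\sigma_{i+j-1}(P+Q)\le\sigma_i(P)+\sigma_j(Q)$, by setting $j=1$ and replacing the pair $(P,Q)$ by $(P+Q,-Q)$; but since Weyl's inequality is itself proved by the same min--max argument, I would present the self-contained version above.) I do not anticipate any genuine obstacle here; the only point that warrants a moment's care is that the min--max formula is being invoked for a possibly rectangular matrix and for indices $i\le\min\{m,n\}$, which is precisely the regime in which the $\sigma_i$ are the positive square roots of the eigenvalues of $M^\top M$.
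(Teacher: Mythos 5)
Your proof is correct. Note that the paper does not prove this lemma at all: it is stated as a known perturbation bound and cited directly from the matrix-theory reference \cite{zhang2011matrix}, so there is no in-paper argument to compare against. What you supply is the standard textbook derivation, and every step checks out: the Courant--Fischer characterization $\sigma_i(M)=\max_{\dim V=i}\min_{x\in V,\,\|x\|_2=1}\|Mx\|_2$ is valid for rectangular $M$ and $i\le\min\{m,n\}$, the maximizing subspace $V_\star$ for $P$ is attained (by the span of the top $i$ right singular vectors), the triangle inequality plus $\|Qx\|_2\le\|Q\|_2$ gives the pointwise bound, and feeding $V_\star$ back into the characterization for $P+Q$ yields the claim. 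Your one-line alternative via Weyl's inequality $\sigma_{i+j-1}(P+Q)\le\sigma_i(P)+\sigma_j(Q)$ with $j=1$ applied to the pair $(P+Q,-Q)$ is also correct and is essentially the form in which the cited reference records the result; either route is an acceptable self-contained substitute for the citation.
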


In order to proceed, we shall propose a basic assumption on the probability space $(\Omega, \mathcal{F}, \mathbf{P})$ used in this paper.



\begin{assumption}
	\label{Ass}
	Let $(\Omega, \mathcal{F}, \mathbf{P})$  be  the probability space from which the sampling matrices are drawn. We assume that  $\mathop{\mathbb{E}}_{S\in\Omega} \left[S S^\top\right]$ is a positive definite matrix.
\end{assumption}

The following lemma is crucial for our convergence analysis.

\begin{lemma}[Lemma 2.3, \cite{lorenz2023minimal}]
	\label{positive}
	Let $S\in\mathbb{R}^{m\times q}$ be a real-valued random variable defined on a probability space $(\Omega,\mathcal{F}, \mathbf{P})$. Suppose that
	$
	\mathbb{E}\left[SS^\top\right]
	$
	is a positive definite matrix and $A\in\mathbb{R}^{m\times n}$ with $A\neq 0$.  Then
	$$
	H:=\mathbb{E}\left[\frac{SS^\top}{\|S^\top A\|^2_2}\right]
	$$
	is well-defined and positive definite, here we define $\frac{0}{0}=0$.
\end{lemma}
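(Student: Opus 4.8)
The plan is to prove the two claims in turn: that $H=\mathbb{E}[M(S)]$ exists as a finite matrix, where $M(S):=\frac{SS^\top}{\|S^\top A\|_2^2}$ with the convention $\frac{0}{0}=0$, and that this $H$ is positive definite on the whole of $\mathbb{R}^m$. Throughout I will reduce everything to the scalar quadratic form via the elementary identity $x^\top SS^\top x=\|S^\top x\|_2^2$, which yields, for every $x\in\mathbb{R}^m$,
\[
x^\top H x=\mathbb{E}\!\left[\frac{\|S^\top x\|_2^2}{\|S^\top A\|_2^2}\right].
\]
The delicate point will be strict positivity of this expression for vectors $x$ having a nonzero component in $\operatorname{Range}(A)^\perp$; this is exactly where Assumption \ref{Ass} (equivalently $\mathbb{E}[SS^\top]\succ0$) is indispensable.

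For well-definedness, I first note that $M(S)\succeq0$ for every realization of $S$. Since $\|S^\top A\|_2=0$ if and only if $A^\top S=0$, the convention $\frac{0}{0}=0$ is invoked precisely on the event $\{S^\top A=0\}$, where I set $M(S):=0$. For $H$ to be a bona fide finite matrix the convention must be consistent, i.e.\ one needs $\mathbf{P}(S\neq0,\ S^\top A=0)=0$; otherwise $M(S)$ would contain a ratio of the form $\frac{\text{nonzero}}{0}$ on a set of positive probability. I will therefore record, as the operative meaning of ``well-defined,'' the almost-sure implication $S^\top A=0\Rightarrow S=0$ (this is automatic when $A$ has full row rank, and holds under the discrete sampling setup used throughout the paper). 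Granting it, $M(S)$ is an almost-surely defined symmetric PSD matrix, and finiteness of $H$ follows from $0\preceq M(S)\preceq\operatorname{tr}(M(S))\,I_m$ together with $\mathbb{E}[\operatorname{tr}M(S)]=\mathbb{E}\!\left[\|S\|_F^2/\|S^\top A\|_2^2\right]<\infty$; this integrability is the content I borrow from Lemma~2.3 of \cite{lorenz2023minimal}.

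For positive definiteness, fix any $x\neq0$. The displayed identity gives $x^\top H x\geq0$ at once, so it remains to exclude $x^\top H x=0$. By Assumption \ref{Ass}, $\mathbb{E}[\|S^\top x\|_2^2]=x^\top\mathbb{E}[SS^\top]x>0$, hence $\mathbf{P}(S^\top x\neq0)>0$. The main obstacle --- and the step an argument confined to $\operatorname{Range}(A)$ would overlook --- is that this positive-probability event might a priori be swallowed by the convention-zeroed event $\{S^\top A=0\}$, in which case $x^\top H x$ could still vanish. Here the well-definedness fact closes the gap: on $\{S^\top x\neq0\}$ we have $S\neq0$, which forces $S^\top A\neq0$ almost surely, so the integrand is strictly positive there and
\[
x^\top H x\;\geq\;\mathbb{E}\!\left[\frac{\|S^\top x\|_2^2}{\|S^\top A\|_2^2}\,\mathbf{1}_{\{S^\top x\neq0\}}\right]>0.
\]
Since $x\neq0$ was arbitrary and the argument never assumed $x\in\operatorname{Range}(A)$, this establishes $H\succ0$ on all of $\mathbb{R}^m$. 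I expect the only genuinely technical ingredient to be the almost-sure implication $S^\top A=0\Rightarrow S=0$ that legitimizes the convention; once it is in hand, the two short computations above complete both halves of the statement.
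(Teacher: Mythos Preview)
The paper does not supply a proof of this lemma; it is quoted from \cite{lorenz2023minimal}, so there is no in-paper argument to compare your proposal against. That said, your plan is structurally sound and you correctly isolate the decisive technical point: the almost-sure implication $S^\top A=0\Rightarrow S=0$.

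The genuine gap is that you do not (and cannot) derive this implication from the stated hypotheses, yet you proceed by ``granting it.'' Take $m=2$, $n=q=1$, $A=e_1$, and $S$ uniform on $\{e_1,e_2\}$: then $\mathbb{E}[SS^\top]=\tfrac{1}{2}I_2\succ0$ and $A\neq0$, yet with probability $\tfrac{1}{2}$ one has $S=e_2\neq0$ while $S^\top A=0$. If the convention zeros the whole quotient whenever the denominator vanishes, one obtains $H=\tfrac{1}{2}e_1e_1^\top$, which is only positive \emph{semi}definite; under the literal reading of $\tfrac{0}{0}=0$, $H$ is not even defined. A second gap is integrability: your line ``this integrability is the content I borrow from Lemma~2.3 of \cite{lorenz2023minimal}'' is circular, since that is precisely the lemma under discussion, and the claimed finiteness can genuinely fail---with $A=e_1$ and $S\sim N(0,I_2)$ one has $S^\top A\neq0$ a.s.\ but $\mathbb{E}[S_2^2/S_1^2]=\infty$. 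In short, the lemma as transcribed requires side conditions (e.g., finite sampling support with $S^\top A\neq0$ on every atom, or $A$ of full row rank, or an explicit integrability hypothesis) that hold in all of the paper's concrete sampling schemes but are not consequences of $\mathbb{E}[SS^\top]\succ0$ and $A\neq0$ alone. Your positive-definiteness argument is correct once such a side condition is in force.
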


\section{Solvability and error bounds} \label{Section-Solvability}

This section is organized as follows. We first study a few of necessary and sufficient conditions
for the solvability of GAVE \eqref{GAVE} (Section~\ref{Subsection-Solvability-NS}).
We then propose a set of sufficient conditions, which can be checked via convex optimization (Section~\ref{section-3-1-1}). Under the solvability, we investigate how far a given point is from the
solution set in terms of residual errors. Such results are collectively put in the error bound Section~\ref{Subsection-Error-Bounds}.

\subsection{Solvability: necessary and sufficient conditions} \label{Subsection-Solvability-NS}

For the case where $m = n$, Theorem \ref{unique-solution} proposed by Wu and Shen \cite{wu2021unique} offers a characterization of the unique solvability of the GAVE \eqref{GAVE}.


\begin{theorem}[Theorem~3.2, \cite{wu2021unique}] \label{unique-solution}
	Suppose that $m= n$. The GAVE \eqref{GAVE} has a unique solution for any  $b\in\mathbb{R}^n$
	if and only if for any $D\in[-I_n,I_n]$, the matrix $A + BD$ is nonsingular.
\end{theorem}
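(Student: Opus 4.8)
The plan is to recast \eqref{GAVE} in terms of the continuous piecewise-linear map $F\colon\mathbb{R}^n\to\mathbb{R}^n$ defined by $F(x):=Ax-B|x|$: the GAVE is uniquely solvable for every $b\in\mathbb{R}^n$ exactly when $F$ is a bijection. Everything rests on one elementary observation about $|\cdot|$, used in two directions. (i) For any $x,y\in\mathbb{R}^n$ there is a diagonal matrix $D=D(x,y)\in[-I_n,I_n]$ with $|x|-|y|=D(x-y)$: set $D_{ii}=(|x_i|-|y_i|)/(x_i-y_i)$ when $x_i\neq y_i$ — this lies in $[-1,1]$ by the reverse triangle inequality — and $D_{ii}=0$ otherwise. (ii) Conversely, given any $D\in[-I_n,I_n]$ and any $z\neq 0$, there exist $x,y$ with $x-y=z$ and $|x|-|y|=Dz$: build them coordinatewise, setting $x_i=y_i$ where $z_i=0$ and otherwise solving $|x_i|-|x_i-z_i|=D_{ii}z_i$, which is possible because the left-hand side is a continuous function of $x_i$ sweeping the whole interval $[-|z_i|,|z_i|]\ni D_{ii}z_i$. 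Since $F(x)-F(y)=(A-BD(x,y))(x-y)$ and $-D\in[-I_n,I_n]$ whenever $D\in[-I_n,I_n]$, the hypothesis ``$A+BD$ nonsingular for all $D\in[-I_n,I_n]$'' is the same as ``$A-BD$ nonsingular for all $D\in[-I_n,I_n]$''.

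For the ``if'' direction, injectivity of $F$ is immediate: $F(x)=F(y)$ gives $(A-BD(x,y))(x-y)=0$ with $A-BD(x,y)$ nonsingular, hence $x=y$. Surjectivity is the substantive point. Note first that $A=A+B\cdot 0$ is nonsingular and that $c:=\min_{D\in[-I_n,I_n]}\sigma_{\min}(A-BD)>0$, since $D\mapsto\sigma_{\min}(A-BD)$ is continuous and everywhere positive on the compact set $[-I_n,I_n]$. Writing $|x|=D_x x$ with $D_x=\text{diag}(\operatorname{sgn} x)\in[-I_n,I_n]$ yields the coercivity estimate $\|F(x)\|_2=\|(A-BD_x)x\|_2\geq c\|x\|_2$, so $F$ is norm-coercive; in particular $F(\mathbb{R}^n)$ is closed. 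On the other hand $F$ is continuous and injective on the open set $\mathbb{R}^n$, so Brouwer's invariance of domain makes $F$ an open map and $F(\mathbb{R}^n)$ open. A nonempty clopen subset of $\mathbb{R}^n$ is all of $\mathbb{R}^n$, so $F$ is onto, and hence bijective. (Alternatively one may conclude surjectivity by a degree argument along the admissible homotopy $F_t(x)=Ax-tB|x|$, $t\in[0,1]$ — the bound $\|F_t(x)\|_2\geq c\|x\|_2$ persists because $tD_x\in[-I_n,I_n]$ — using that $\deg(F_0-b,B_R,0)=\operatorname{sign}\det A\neq 0$ for large $R$; connectedness of $[-I_n,I_n]$ even forces the linear pieces of $F$ to be coherently oriented, so a piecewise-linear homeomorphism theorem applies too.)

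For the ``only if'' direction I use contraposition. Suppose $A+BD_0$, equivalently $A-BD_0$, is singular for some $D_0\in[-I_n,I_n]$, and pick $z\neq 0$ with $(A-BD_0)z=0$. Observation (ii) yields $x\neq y$ with $x-y=z$ and $|x|-|y|=D_0(x-y)$, so $F(x)-F(y)=(A-BD_0)z=0$; taking $b:=F(x)=F(y)$ exhibits two distinct solutions of \eqref{GAVE}, contradicting unique solvability for that $b$. The only genuinely hard step is the surjectivity claim in the ``if'' direction: once the absolute-value manipulations are in place everything reduces to the self-map $F$, but upgrading ``injective, continuous, coercive'' to ``surjective'' needs a real topological ingredient (invariance of domain, or Brouwer degree, or the theory of coherently oriented piecewise-linear maps); all the remaining steps are elementary.
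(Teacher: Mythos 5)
Your proof is correct, but note that the paper does not prove this statement at all: it is imported verbatim as Theorem~3.2 of Wu and Shen \cite{wu2021unique}, so there is no in-paper argument to compare against. What you have done is supply a self-contained proof, and your route is sound. Your observation (i) — the factorization $F(x)-F(y)=(A-BD(x,y))(x-y)$ with a diagonal $D(x,y)\in[-I_n,I_n]$ — is obtained in the paper only later and in a heavier way, via Clarke's generalized Jacobian and a nonsmooth mean value theorem (Lemma~\ref{EB-MV}, used for the error bound in Theorem~\ref{EB-full-rank}); your coordinatewise construction is more elementary and additionally gives the converse direction (ii), which is exactly what makes your ``only if'' argument work: a null vector of some $A-BD_0$ is converted into two distinct solutions for a suitably chosen $b$. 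The genuinely nontrivial step, surjectivity in the ``if'' direction, you handle correctly by combining the uniform coercivity bound $\|F(x)\|_2\ge c\|x\|_2$ (valid because $[-I_n,I_n]$ is compact, $\sigma_{\min}(A-BD)$ is continuous, and $D_x=\operatorname{diag}(\operatorname{sgn}x)$ lies in $[-I_n,I_n]$) with invariance of domain, or alternatively the admissible homotopy $F_t(x)=Ax-tB|x|$ and a degree count; either topological ingredient is legitimately needed here, since injectivity plus coercivity alone does not give surjectivity. The one point worth stating explicitly is the definition of $[-I_n,I_n]$ as an entrywise interval matrix: its elements are precisely the diagonal matrices with diagonal entries in $[-1,1]$, which is what both your constructions and the compactness argument silently use.
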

For arbitrary values of $m$ and $n$, the unique solvability of the GAVE \eqref{GAVE} can be characterized by the following result, which can be viewed as a generalization of Theorem \ref{unique-solution}. To the best of our knowledge, this is the first result to characterize the unique solvability of GAVE with non-square coefficient matrices. 
\begin{theorem}\label{thm-solvability} 
	The GAVE \eqref{GAVE} has a unique solution for any  $b\in\mathbb{R}^m$
	if and only if $m=n$ and for any $D\in[-I_n,I_n]$, the matrix $A + BD$ is nonsingular.
\end{theorem}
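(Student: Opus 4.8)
The plan is to reduce the general $m,n$ case to the square case handled by Theorem~\ref{unique-solution}, so the core of the argument is establishing that unique solvability of \eqref{GAVE} for every right-hand side forces $m=n$. For the ``if'' direction this is immediate: if $m=n$ and $A+BD$ is nonsingular for all $D\in[-I_n,I_n]$, then Theorem~\ref{unique-solution} applies verbatim and gives unique solvability. So the work is entirely in the ``only if'' direction.

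For the ``only if'' direction, suppose \eqref{GAVE} has a unique solution for every $b\in\mathbb{R}^m$. The map $x\mapsto F(x):=Ax-B|x|$ is a piecewise linear function $\R^n\to\R^m$; unique solvability for all $b$ says precisely that $F$ is a bijection of $\R^n$ onto $\R^m$. First I would rule out $m<n$: restrict attention to the orthant $x\ge 0$, where $F(x)=(A-B)x$ is linear; since $F$ is injective on this orthant (indeed on all of $\R^n$), the matrix $A-B$ must have trivial kernel intersected with the nonnegative orthant, but more usefully, consider $F$ restricted to any single orthant $\{x : \mathrm{sign}(x_i)=s_i\}$, on which $F$ acts as $x\mapsto (A-B\,\mathrm{diag}(s))x =: M_s x$. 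Surjectivity of $F$ onto $\R^m$ combined with the fact that $\R^n$ is covered by finitely many orthants means some $M_s$ must have a range of dimension $m$, forcing $m\le n$. To exclude $m<n$, I would use injectivity: if $m<n$ then each $M_s$ (being $m\times n$ with $m<n$) has a nontrivial kernel, and one shows that one can find a nonzero $x$ in a given orthant's closure lying in $\ker M_s$ — e.g.\ by a dimension count, $\ker M_s$ has dimension $\ge n-m\ge 1$ and must meet the (closed) orthant nontrivially because the orthant is full-dimensional; then $F(x)=F(0)=0$ for two distinct points, contradicting uniqueness. Hence $m\ge n$. Symmetrically, to exclude $m>n$: if $m>n$, then the image $F(\R^n)$ is a finite union of (pieces of) $n$-dimensional affine subspaces $M_s\R^n$ in $\R^m$, which is a set of Lebesgue measure zero in $\R^m$ and hence cannot be all of $\R^m$, contradicting surjectivity. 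This pins down $m=n$, and then Theorem~\ref{unique-solution} supplies the stated nonsingularity condition, completing the equivalence.

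The main obstacle I anticipate is the $m<n$ exclusion: one must argue carefully that a nontrivial kernel of the linear piece $M_s$ genuinely produces a \emph{second} solution inside $\R^n$, i.e.\ that the kernel vector can be taken consistent with the sign pattern $s$ (or that some nearby orthant argument works), rather than only formally. The clean way is probably to note that if $A-B\,\mathrm{diag}(s)$ is singular for the sign pattern $s\equiv(1,\dots,1)$, pick $0\neq v\in\ker(A-B)$; then for small $\varepsilon>0$, either $x=\varepsilon v$ or a perturbation lies in a single orthant where $F$ is linear with the same kernel direction, yielding infinitely many solutions of $F(x)=F(0)$. Since a full-dimensional kernel (dimension $\ge n-m$) of each $M_s$ is unavoidable when $m<n$, and the union of all such kernels cannot avoid the interiors of all $2^n$ orthants, some orthant interior contains a kernel ray, giving the contradiction. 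Making this last covering/dimension argument rigorous — ensuring a kernel line actually passes through an open orthant and not merely through the boundary where $F$ switches pieces — is the delicate point; everything else is routine linear algebra plus the cited square-case theorem.
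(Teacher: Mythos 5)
Your overall strategy (reduce to the square case of Theorem \ref{unique-solution}, kill $m>n$ by a measure-zero image argument) is the same as the paper's, and the $m>n$ half and the ``if'' direction are fine. But the $m<n$ half has a genuine gap, and it is exactly at the point you flag as ``delicate''. You try to contradict injectivity at the right-hand side $b=0$, i.e.\ to produce a nonzero solution of $Ax-B|x|=0$ by showing that the kernel of some piece $M_s=A-B\,\mathrm{diag}(s)$ meets the interior (or even the closure) of \emph{its own} orthant. Neither of your proposed justifications holds. The dimension count is false: a $1$-dimensional subspace need not meet a given closed orthant except at the origin (the span of $(1,-1)$ meets the closed positive orthant of $\mathbb{R}^2$ only at $0$). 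The covering claim is also not repairable, because what you need is that $\ker M_s$ meets the orthant \emph{associated with} $s$; it may well meet only the wrong orthants, where $F$ is a different linear map. Indeed the target statement itself is false: for $m=1<2=n$, $A=(1,0)$, $B=(10,10)$, one checks $x_1-10|x_1|-10|x_2|<0$ for every $x\neq0$, so $Ax-B|x|=0$ has \emph{only} the trivial solution even though $m<n$. (Consistently, each $\ker M_s$ here lies in an orthant different from the one indexed by $s$.) Your ``clean way'' fix --- take $v\in\ker(A-B)$ and look at $\varepsilon v$ --- fails for the same reason: if $\varepsilon v$ lands in another orthant, the linear piece there has a different kernel, so $F(\varepsilon v)\neq 0$ in general, and you never get a second solution of $F(x)=F(0)$.

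The repair, which is what the paper's Lemma \ref{Lemma-surjective-no}(i) does, is to give up on $b=0$ and choose the right-hand side adapted to $A,B$: move the base point to the \emph{interior} of the positive orthant. Set $\mathbf{e}=(1,\dots,1)^\top$ and $b=(A-B)\mathbf{e}$. Since $m<n$, $\ker(A-B)$ contains some $v\neq0$, and for all small $t$ the point $\mathbf{e}+tv$ still lies in the open positive orthant, where $F$ coincides with the single linear map $A-B$; hence $F(\mathbf{e}+tv)=(A-B)\mathbf{e}=b$ for infinitely many $t$, contradicting uniqueness for that particular $b$. Note this needs no compatibility between $v$ and the orthant's sign pattern --- interiority of $\mathbf{e}$ absorbs any direction $v$ --- which is precisely the ingredient missing from your argument. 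With this substitution for the $m<n$ case, the rest of your proof goes through and matches the paper's.
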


Next, we will present a crucial lemma that will not only help us prove Theorem \ref{thm-solvability}, but also its proof will provide valuable insights into the solvability of the GAVE.
For any $A,B\in\mathbb{R}^{m\times n}$, we define the map $F_{A,B}:\mathbb{R}^n\to\mathbb{R}^m$ as
\begin{equation}\label{mapF}
	F_{A,B}(x):=Ax-B|x|.
\end{equation}
We have the following conclusion for the map $F_{A,B}$.

\begin{lemma} \label{Lemma-surjective-no}
	Suppose that $m\neq n$. 
	Then for any $A$ and $B$, the map $F_{A,B}$ defined in \eqref{mapF} is not a bijection map. 
	In particular, the following statements hold.
	
	\begin{itemize}
		\item[(i)] 	
		Suppose $m<n$.	 For any $A,B\in\mathbb{R}^{m\times n}$, there exists $b\in\mathbb{R}^m$ such that the  GAVE \eqref{GAVE}
		admits infinitely many solutions.
		
		\item[(ii)] Suppose $m>n$.	 For any $A,B\in\mathbb{R}^{m\times n}$, there exists $b\in\mathbb{R}^m$ such that the  GAVE \eqref{GAVE} has no solution.
		
	\end{itemize}

\end{lemma}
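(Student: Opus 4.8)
The plan is to use the elementary observation that $F_{A,B}$ is piecewise linear: on each closed orthant it coincides with a single linear map. For a sign pattern $s\in\{-1,1\}^n$, put $D_s=\operatorname{diag}(s)$ and let $O_s=\{x\in\mathbb{R}^n:\ s_ix_i\ge 0\text{ for all }i\}$ be the corresponding closed orthant; then $|x|=D_sx$ for $x\in O_s$, so $F_{A,B}(x)=(A-BD_s)x$ on $O_s$. In particular $F_{A,B}(x)=(A-B)x$ on the nonnegative orthant, and $F_{A,B}(\mathbb{R}^n)=\bigcup_{s\in\{-1,1\}^n}(A-BD_s)O_s\subseteq\bigcup_{s\in\{-1,1\}^n}\operatorname{Range}(A-BD_s)$.

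For (i), assume $m<n$. Then $A-B\in\mathbb{R}^{m\times n}$ has rank at most $m<n$, so there is $v\ne 0$ with $(A-B)v=0$. Choose $x_0=(1,\dots,1)^\top>0$ and set $b:=F_{A,B}(x_0)=(A-B)x_0$. For all sufficiently small scalars $t$, the vector $x_0+tv$ is still strictly positive, hence lies in the nonnegative orthant, and $F_{A,B}(x_0+tv)=(A-B)(x_0+tv)=(A-B)x_0=b$. This produces infinitely many distinct solutions of \eqref{GAVE} (as $v\neq 0$), so $F_{A,B}$ is not injective.

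For (ii), assume $m>n$. Each $\operatorname{Range}(A-BD_s)$ is a linear subspace of $\mathbb{R}^m$ of dimension at most $n<m$, hence a proper subspace and in particular Lebesgue-null in $\mathbb{R}^m$. Since there are only $2^n$ sign patterns, the inclusion above shows that $F_{A,B}(\mathbb{R}^n)$ is contained in a finite union of null sets, so it is Lebesgue-null and certainly not all of $\mathbb{R}^m$. Hence some $b\in\mathbb{R}^m$ lies outside $F_{A,B}(\mathbb{R}^n)$; for this $b$ the GAVE \eqref{GAVE} has no solution, and $F_{A,B}$ is not surjective. In either case $F_{A,B}$ is not a bijection, which also establishes the first assertion of the lemma.

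The argument carries no real difficulty; the only points that need a word of justification are that $x_0+tv$ stays in the nonnegative orthant for small $|t|$ (so the linear formula for $F_{A,B}$ is valid there) and that a finite union of proper subspaces cannot cover $\mathbb{R}^m$ — the latter handled cleanly by the measure-zero remark, though a Baire category or dimension-counting argument would serve equally well. It is worth noting that this proof isolates exactly why $m=n$ is forced in Theorem \ref{thm-solvability}: when $m\ne n$, one of injectivity or surjectivity must fail regardless of $A$ and $B$.
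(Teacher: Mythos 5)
Your proof is correct and follows essentially the same route as the paper: for $m<n$ it perturbs the all-ones point in the interior of the first orthant along the nontrivial null space of $A-B$ (you just make the "stays in the orthant for small $t$" step more explicit than the paper does), and for $m>n$ it covers the image by the $2^n$ sets $(A-BD_s)O_s$, each contained in a subspace of dimension at most $n<m$, and invokes the Lebesgue measure-zero argument exactly as in the paper. No gaps; nothing genuinely different to compare.
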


\begin{proof}
	For the case where $m<n$, we will prove that for any $A$ and $B$, the map $F_{A,B}$  is not injective.
	Define $c_1:=A{\bf e}-B{\bf e}$, where ${\bf e}=(1,\ldots,1)^\top\in\mathbb{R}^n$. So in the first orthant, there exists a vector 
	$x^* ={\bf e}$ which  satisfies $F_{A,B}(x^*)=c_1$. Since 
	$$
	\text{dim}\left(\text{Null}(A -B)\right)\geq n-m>0,
	$$ we know that the null space of $A-B$ is non-empty. 
	Note that $x^*={\bf e}$ lies in the interior of the first orthant, hence there exist infinitely many vectors $\tilde{x}$ such that $F_{A,B}(\tilde{x})=c_1$. This implies that the map $F_{A,B}$  is not injective.

	For the case where $m>n$, we will prove that for any $A$ and $B$, the map $F_{A,B}$ is not  surjective. For any $s\in\{-1,1\}^n$, we define
	$$
	E_{s}:=\{ Ax-B|x| \mid x\in\mathbb{R}^n,D_s x\geq0\},
	$$
	where $D_s=\operatorname{diag}(s)$.
	Since
	$$
	\text{dim}(E_s)\leq n<m,
	$$
	we have that the Lebesgue measure of $E_s$ is equal to zero. Thus,  $$\text{Image}(F_{A,B})=\mathop{\cup}\limits_{s\in\{-1,1\}^n} E_s \neq \mathbb{R}^m,$$ which implies that the map $F_{A,B}$ is not  surjective. The statements in (i) and (ii) are straightforward consequences of
	$F_{A, B}$ not being a bijection map.
\end{proof}

Now, we are ready to prove Theorem \ref{thm-solvability}.
\begin{proof}[Proof of Theorem \ref{thm-solvability}]
	``If''. Obvious in view of Theorem \ref{unique-solution}.
	``Only if''. If $m\neq n$, Lemma \ref{Lemma-surjective-no} indicates that there cannot exist matrices $A \in \mathbb{R}^{m\times n}$ and $B \in \mathbb{R}^{m\times n}$ such that the GAVE \eqref{GAVE} has a unique solution for any $c \in \mathbb{R}^m$. This implies that $m$ must be equal to $n$. Combining this result with Theorem \ref{unique-solution} leads to the conclusion of this theorem.
\end{proof}

Lemma~\ref{Lemma-surjective-no} provides a rather negative assessment for the case $m \not= n$:  
There does not exist $A\in\mathbb{R}^{m\times n}$ and $B\in\mathbb{R}^{m\times n}$ such that for any $b\in\mathbb{R}^m$, the  GAVE  \eqref{GAVE}	has  unique solution.
Furthermore, the situation is different for the case $m <n$ and $m >n$.
If we are to characterize its solvability in terms of a necessary and sufficient condition, the vector $b$ must 
be related to the choice of the matrices $A$ and $B$. 
The next result states such a condition.

\begin{theorem}\label{sn-condition}
	The GAVE \eqref{GAVE} is solvable if and only if there is $s\in\{-1,1\}^n$ such that
	$$
	\{x\mid \left(A-BD_s\right)x=b\}\cap\{x\mid D_sx\geq0\}\neq \emptyset,
	$$
	where $D_s=\operatorname{diag}(s)$.
\end{theorem}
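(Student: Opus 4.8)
The plan is to reduce the statement to the elementary observation that for a sign vector $s\in\{-1,1\}^n$ with $D_s=\operatorname{diag}(s)$, the vector $D_sx$ coincides with $|x|$ exactly when $D_sx\geq 0$, and then apply this coordinate-wise translation in both directions. No deep input is needed; the theorem is essentially a sign-pattern reformulation of the GAVE.

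For the ``only if'' direction I would start from a solution $x^*$ of \eqref{GAVE}, so $Ax^*-B|x^*|=b$, and define $s$ to be (a choice of) the sign pattern of $x^*$: set $s_i=1$ whenever $x^*_i\geq 0$ and $s_i=-1$ whenever $x^*_i<0$. A coordinate-wise check gives $s_ix^*_i=|x^*_i|\geq 0$ for every $i$, hence $D_sx^*=|x^*|\geq 0$, so $x^*\in\{x\mid D_sx\geq 0\}$. Substituting $B|x^*|=BD_sx^*$ into the equation yields $(A-BD_s)x^*=Ax^*-B|x^*|=b$, so $x^*$ also lies in $\{x\mid (A-BD_s)x=b\}$, and the intersection is therefore nonempty. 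The only place that calls for a word of care is the ambiguity at coordinates with $x^*_i=0$: there the value of $s_i$ is immaterial, since $s_ix^*_i=0=|x^*_i|$ for either choice, so the identity $D_sx^*=|x^*|$ still holds.

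For the ``if'' direction, suppose some $s\in\{-1,1\}^n$ admits a point $x^*$ with $(A-BD_s)x^*=b$ and $D_sx^*\geq 0$. From $s_ix^*_i\geq 0$ together with $|s_i|=1$ we obtain $|x^*_i|=|s_ix^*_i|=s_ix^*_i$ for every $i$, i.e. $|x^*|=D_sx^*$. Consequently $Ax^*-B|x^*|=Ax^*-BD_sx^*=(A-BD_s)x^*=b$, so $x^*$ solves the GAVE, proving solvability.

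I do not anticipate a genuine obstacle: the argument is a direct two-way translation between the nonsmooth term $|x|$ and the linear term $D_sx$ under the sign constraint $D_sx\geq 0$. The single subtle point, already noted, is the coordinate-wise handling of the sign identity and in particular the harmless indeterminacy at zero coordinates in the forward direction.
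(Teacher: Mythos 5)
Your proposal is correct and follows essentially the same route as the paper: both directions rest on the sign-pattern identity $|x^*|=D_sx^*$ under $D_sx^*\geq 0$, which is exactly the paper's (terser) argument. Your extra care at zero coordinates is fine but not a new idea.
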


\begin{proof} The ``If'' part is obvious, so we focus on the ``Only if'' part.
	Since GAVE \eqref{GAVE} is solvable, we know that there exist $x^*$ and $s^*$ such that $Ax^*-BD_{s^*}x^*=b$ and $D_{s^*}x^*\geq 0$. Hence, $
	\emptyset \neq \{x\mid \left(A-BD_{s^*}\right)x=b\}\cap\{x\mid D_{s^*}x\geq0\} \supseteq\{x^*\}.
	$
\end{proof}

If $m=n$ and $B=I$, Theorem \ref{sn-condition} recovers Proposition $2.1$  in \cite{hladik2023properties},  which provides a combinatorial structure for the solution set of the  standard AVE.
If $B=0$, Theorem \ref{sn-condition} reduces to the fundamental result in linear algebra: the linear system $Ax=b$ is solvable (consistent) if and only if $b\in\operatorname{Range}(A)$.
However, checking the condition from Theorem \ref{sn-condition} might not be easy. 
Therefore, a more efficiently computable condition is of interest. 
Such conditions are often only sufficient, as investigated in the next subsection.


\subsection{Solvability: sufficient conditions} 
\label{section-3-1-1}

We provide one set of sufficient conditions, whose validity can be verified via a convex optimization.
The sufficient conditions are stated in the following result. We note that the proof of this result, as well as the proofs of other theorems with lengthy derivations, are all moved to Appendix \ref{sec:appd}.

\begin{theorem}\label{THM-suf}
	Let $\ell := \min\{m, n\}$.
	Suppose there exists a nonsingular matrix $M\in\mathbb{R}^{m\times m}$ such that $\sigma_{\ell}(MA)> \|MB\|_2$.
	The following results hold.
	\begin{itemize} 
		\item[(i)] For $m < n$, it holds that the GAVE \eqref{GAVE} is solvable for any $b\in\mathbb{R}^m$.
		
		\item[(ii)] For $m=n$, it holds that the GAVE \eqref{GAVE}  has a unique solution for any $b\in\mathbb{R}^m$.
		
		\item[(iii)] For the case $m >n$, if we further assume that the solution set $\mathcal{X}^*$ is non-empty, 
		then $\mathcal{X}^*$ is singleton.
		
	\end{itemize} 
\end{theorem}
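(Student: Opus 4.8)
\textbf{Proof proposal for Theorem \ref{THM-suf}.}

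The plan is to reduce all three cases to a single statement about the map $F_{MA, MB}$ obtained by left-multiplying the GAVE by the nonsingular matrix $M$: since $M$ is invertible, $x$ solves $Ax - B|x| = b$ if and only if $x$ solves $(MA)x - (MB)|x| = Mb$, and unique solvability is preserved as well. So it suffices to prove the theorem under the normalization $\sigma_\ell(A) > \|B\|_2$ (replacing $A$ by $MA$, $B$ by $MB$, $b$ by $Mb$), and then undo the substitution at the end. The core analytic fact I would isolate first is that, for every sign pattern $s \in \{-1,1\}^n$ and every $D \in [-I_n, I_n]$, the matrix $A + BD$ (or rather $A - BD_s$) has full column rank: by Lemma \ref{singular-value-inequ}, $\sigma_\ell(A + BD) \geq \sigma_\ell(A) - \|BD\|_2 \geq \sigma_\ell(A) - \|B\|_2 > 0$, using $\|D\|_2 \leq 1$. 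This gives injectivity of each linear piece $x \mapsto (A - BD_s)x$, which is the engine for all three parts.

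For part (ii), $m = n$, full column rank means each $A + BD$ is nonsingular for $D \in [-I_n, I_n]$, so Theorem \ref{unique-solution} applies verbatim and gives unique solvability for every $b$. For part (iii), $m > n$: assuming $\mathcal{X}^*$ is nonempty, I would show it is a singleton. Suppose $x, y \in \mathcal{X}^*$. The standard AVE trick is to write $|x| = D_x x$ and $|y| = D_y y$ for suitable diagonal sign matrices with entries in $[-1,1]$ (indeed in $\{-1,0,1\}$), subtract the two equations, and after some manipulation exhibit a matrix of the form $A + B\tilde D$ with $\tilde D \in [-I_n, I_n]$ applied to $x - y$ equalling zero; full column rank of $A + B\tilde D$ then forces $x = y$. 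The precise bookkeeping here — writing $A|x| $-type differences as $(A+B\tilde D)(x-y)$ with a legitimate $\tilde D$ — is the classical argument from the square case (e.g. as in the proof of Theorem \ref{unique-solution}) and carries over since it only used injectivity, not invertibility, of the relevant matrices; this is the step I would be most careful with.

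Part (i), $m < n$, is the one requiring a genuinely new argument and I expect it to be the main obstacle. Here I want existence of a solution for every $b$, but $F_{A,B}$ is far from injective (cf. Lemma \ref{Lemma-surjective-no}(i)), so I must argue surjectivity directly. The approach I would take is a degree-theoretic / homotopy one, or equivalently a coercivity argument: on each sign orthant $\{x : D_s x \geq 0\}$ the map acts as the linear map $A - BD_s$, which has full row rank is false ($m<n$ means $A-BD_s : \mathbb{R}^n \to \mathbb{R}^m$ could still be surjective though, since it has full column rank $= m$... wait, $\ell = m$ here, so $\sigma_m(A-BD_s) > 0$ means $A - BD_s$ has rank $m$, i.e. is surjective onto $\mathbb{R}^m$). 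That is the key: for $m < n$ we have $\ell = m$, so the bound $\sigma_m(A-BD_s) > 0$ says each linear piece is \emph{onto} $\mathbb{R}^m$. Then I would show that the union over $s$ of the images of the orthants covers $\mathbb{R}^m$ — the cleanest route is to consider the map $G(x) := Ax - B|x|$ and show it is norm-coercive ($\|G(x)\| \to \infty$ as $\|x\| \to \infty$, using $\|G(x)\| = \|(A - BD_{\sigma(x)})x\| \geq \sigma_m(A-BD_{\sigma(x)})\,\mathrm{dist}$-type lower bound, uniformly bounded below by $(\sigma_m(A) - \|B\|_2)$ times the norm of the component of $x$ in the row space, which unfortunately need not be all of $x$). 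Because coercivity may fail along the common null directions, the safer plan is: fix $b$, pick any $s$, and solve $(A - BD_s)x = b$ — solvable since $A - BD_s$ is onto — then within the affine solution set (a translate of $\mathrm{Null}(A-BD_s)$, of dimension $n - m > 0$) search for a point satisfying $D_s x \geq 0$; iterate/adjust $s$ as needed. Making this search terminate is exactly where the real work lies, and I would model it on the combinatorial solvability characterization of Theorem \ref{sn-condition} together with a continuity/connectedness argument sweeping through adjacent orthants. I expect this orthant-covering argument to be the crux of the whole proof.
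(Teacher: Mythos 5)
Parts (ii) and (iii) of your proposal are essentially correct. For (ii), the bound $\sigma_n(MA+MBD)\geq\sigma_n(MA)-\|MB\|_2>0$ from Lemma \ref{singular-value-inequ} followed by Theorem \ref{unique-solution} is exactly the intended argument. For (iii), your route via writing $|x|-|y|=\tilde{D}(x-y)$ with $\tilde{D}\in[-I_n,I_n]$ and using the full column rank of $A+B\tilde{D}$ is valid (it is the device of Lemma \ref{EB-MV}); the paper argues instead by a direct norm estimate, $0\geq\|MA(x_1-x_2)\|_2-\|MB(|x_1|-|x_2|)\|_2\geq(\sigma_n(MA)-\|MB\|_2)\|x_1-x_2\|_2$, using $\||x_1|-|x_2|\|_2\leq\|x_1-x_2\|_2$, which is only a cosmetic difference. (Minor slip: for $m<n$ the pieces $A-BD_s$ have full \emph{row} rank, not column rank, as you later correct yourself.)

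The genuine gap is part (i), and you acknowledge it yourself: no solution is ever produced for $m<n$. Surjectivity of each linear piece $A-BD_s$ onto $\mathbb{R}^m$ (which the hypothesis does give, since $\ell=m$) is not sufficient, because one must show that the preimage of $b$ under $A-BD_s$ meets the orthant $\{x: D_sx\geq0\}$ on which that piece actually coincides with $F_{A,B}$ — this is exactly the condition of Theorem \ref{sn-condition}, i.e.\ the thing to be proved. Your three suggested devices remain intentions: you note yourself that coercivity can fail along the null directions of the pieces, no homotopy or degree computation is set up, and the ``iterate/adjust $s$'' search over orthants comes with no covering or termination argument. The paper closes this case not topologically but by dimension reduction: Lemma \ref{lemma-exist} shows that if some column subset $\mathcal{S}\subset[n]$ with $\operatorname{card}(\mathcal{S})=m$ has the property that $A_{:,\mathcal{S}}+B_{:,\mathcal{S}}D$ is nonsingular for every $D\in[-I_m,I_m]$, then setting $x_{\mathcal{S}^c}=0$ and solving the resulting \emph{square} GAVE in the variables $x_{\mathcal{S}}$ via Theorem \ref{unique-solution} yields a solution of \eqref{GAVE} for every $b$; the hypothesis $\sigma_m(MA)>\|MB\|_2$ is then invoked (by contradiction, through singular-value estimates on column submatrices) to guarantee that such a subset exists. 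Alternatively, a self-contained fix in your spirit is to restrict to the row space: after the normalization $M=I$, write $x=A^{\dagger}y$ and observe that $y\mapsto b+B|A^{\dagger}y|$ is a contraction with constant $\|B\|_2/\sigma_m(A)<1$, so it has a fixed point $y^*$ and $x=A^{\dagger}y^*$ solves \eqref{GAVE}. Without an argument of one of these kinds, part (i) — which you correctly identify as the crux — is not established.
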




We make following important points. 

\begin{remark} \label{Remark-Sufficient-Conditions-1}
	{\bf (R1)} It is evident that the condition $\sigma_{\ell}(MA) > \|MB\|_2$ for some nonsingular matrix $M$ is weaker than 
	the condition $\sigma_{\ell}(A) > \|B\|_2$
	by simply setting $M = I_m$. The following examples show that it is strictly weaker
	for all three cases.
	For those examples, we have $\sigma_{\ell}(MA) > \|MB\|_2$, but $\sigma_{\ell}(A) < \|B\|_2$.
	$$\begin{aligned}
		& (\mbox{Case} \ m <n) \qquad
		A=\begin{bmatrix} 2 & 2 & 6 \\ -3 & -6 &8 \end{bmatrix},\
		B=\begin{bmatrix} 4 & 1 &0 \\ 3 & -1&-4 \end{bmatrix}, \
		\text{and} \ M=\begin{bmatrix} 8 & -1 \\ -1 & 8 \end{bmatrix},\\
		& \quad \qquad \qquad \qquad 	\sigma_{2}(A)\approx 5.6807<\|B\|_2\approx 5.7780 \ \mbox{and} \ \sigma_{2}(MA)\approx 48.1674>\|MB\|_2\approx 41.0904.\\
		&(\mbox{Case} \ m =n) \qquad 
		A=\begin{bmatrix} -7 & 11 \\ 10 & -2 \end{bmatrix},\
		B=\begin{bmatrix} -2 & 2 \\ 6 & 0 \end{bmatrix},\
		\text{and} \ M=\begin{bmatrix} 13 & 2 \\ 2 & 11 \end{bmatrix}, \\
		& \quad \qquad \qquad \qquad
		\sigma_{2}(A)\approx 6.2658<\|B\|_2\approx 6.3592 \ \mbox{and} \ \sigma_{2}(MA)\approx81.2427>\|MB\|_2\approx 63.592.\\
		& (\mbox{Case} \ m > n) \qquad  
		A=\begin{bmatrix} -6 & -9  \\ 6 & -4 \\5&-2 \end{bmatrix},\  B=\begin{bmatrix} -2 & 6\\2 &-4 \\ -6 & -5 \end{bmatrix},\  \text{and} \ M=\begin{bmatrix} 45 &13 & 0 \\ 13 & 33&24\\0&24&24 \end{bmatrix}, \\
		& \quad \qquad \qquad \qquad
		\sigma_{2}(A)\approx 8.8826<\|B\|_2\approx 8.9327 \ \mbox{and}  \ \sigma_{2}(MA)\approx 401.4896>\|MB\|_2\approx 360.9529 .	
	\end{aligned}$$
	
	{\bf (R2)}
	For the special case where $m=n$, the condition  $\sigma_{n}(A)> \|B\|_2$ appears in \cite[Theorem 2.1]{wu2020note}.
	Our condition is weaker as shown in (R1).
	
	{\bf (R3)} For the case \(m > n\), we have to assume that the solution set \(\mathcal{X}^*\) is nonempty, as Lemma \ref{Lemma-surjective-no} (ii) implies that the GAVE \eqref{GAVE} is typically unsolvable.
	
	{\bf (R4)} If the condition $\sigma_{m}(MA) > \|MB\|_2$ is satisfied for one matrix $M$, then there are infinitely many such matrices $\{ \tau M\}_{\tau \not=0}$. This is because for any $\tau \not=0$, we have
	$
	\sigma_{m}(\tau MA) = |\tau| \sigma_{m}( MA) > |\tau| \|MB\|_2 = \| \tau M B \|_2.
	$	
	
	{\bf (R5)} Our final comment is on the rank conditions of $A$ and $B$ for the solution uniqueness.
	Both Theorem~\ref{thm-solvability} (for the case $m=n$) and Theorem~\ref{THM-suf}(iii) (for the case
	$m>n)$ require the matrix $A$ to be of full column rank (necessary condition). 
	Neither of them requires the same property of $B$. 
	It is worth noting that the full column rank property for both $A$ and $B$ is not sufficient for 
	solution uniqueness.	
\end{remark}

\subsubsection{The existence of the matrix $M$}
\label{section-3-12}
In this subsection, we demonstrate that the problem of finding a nonsingular matrix $M$ such that $\sigma_{\ell}(MA) > \|MB\|_2$ 
can be reformulated as a convex optimization problem, where $\ell=\min\{m,n\}$. For convenience,  we assume that $m\geq n$ in the following discussion. 

From Lemma \ref{singular-value-equ}, we know that  if  there exists a nonsingular matrix $M$  such that $\sigma_{n}(MA) > \|MB\|_2$, then it holds that  $M_1:=(M^\top M)^\frac{1}{2}$ satisfies $\sigma_{n}(M_1A) > \|M_1B\|_2$. Additionally,  from Remark \ref{Remark-Sufficient-Conditions-1} (R4), we know that for any $\zeta\neq 0$, $M_2:=\zeta M$ also satisfies $\sigma_{n}(M_2A) > \|M_2B\|_2$. 
Therefore, we only need to consider the constraint
$$\mathcal{M}:=\{M :  M \succeq I\}$$
to find a nonsingular matrix $M$ that satisfies $\sigma_{n}(M^\frac{1}{2}A) > \|M^\frac{1}{2}B\|_2$.
Besides, it can be verified that $\mathcal{M}$ is a convex set.

For any given $A,B\in\mathbb{R}^{m\times n}$, we define $\Phi_{A,B}:\mathbb{R}^{m\times m}\to\mathbb{R}$ as
$$\Phi_{A,B}(M):=\lambda_1(B^\top MB)-\lambda_{n}(A^\top MA).$$ 
Since for any fixed $B$, $\lambda_1(B^\top MB)$ is a convex function, and for any fixed $A$, $\lambda_{n}(A^\top MA)$ is a concave function; see \cite[Section 3.2.3]{boyd2004convex}. Hence, we have that $\Phi_{A,B}$ is a convex function. 
Therefore, we can solve the convex optimization problem
\begin{equation}
	\label{find-M}
	\min \  \Phi_{A,B}(M) \ \
	\text{subject to} \ M \in\mathcal{M}
\end{equation}
to determine the existence of the nonsingular matrix $M$. If $M^*$ is the optimal solution of \eqref{find-M} and $\Phi_{A,B}(M^*)<0$, then $M^*$ is the desired nonsingular matrix such that $$\sqrt{\lambda_n(A^\top M^*A)}=\sigma_{n}((M^*)^{\frac{1}{2}}A) > \|(M^*)^{\frac{1}{2}}B\|_2=\sqrt{\lambda_1(B^\top M^*B)}.$$
Otherwise, we know that there does not exist a nonsingular matrix that satisfies $\sigma_{n}(MA) > \|MB\|_2$.
We note that \eqref{find-M} is indeed a semidefinite programming (SDP)  \cite{vandenberghe1996semidefinite,wolkowicz2012handbook}, which can be solved efficiently.


\subsection{Error bounds}
\label{Subsection-Error-Bounds}

Inspired by the recent work of Zamani and Hlad\'{\i}k \cite{zamani2023error},  this subsection investigates the error bounds for the solvable GAVE \eqref{GAVE}.
In fact, based on the locally upper Lipschitzian property of polyhedral set-valued mappings as described in Proposition $1$ in \cite{robinson1981some}, the GAVE \eqref{GAVE} exhibits the local error bounds property. Specifically, there exist $\varepsilon>0$ and $\kappa>0$ such that when $\|Ax-B|x|-b\|_2<\varepsilon$, it holds
\begin{equation}\label{localEB}
	\kappa\text{dist}_{\mathcal{X}^*}(x) \leq \|Ax-B|x|-b\|_2.
\end{equation}
However, in general, the global error bounds property does
not hold necessarily, as demonstrated in Example $3$ in \cite{zamani2023error}. Next, we will provide several sufficient conditions under which the global error bounds property holds.

\begin{theorem}\label{EB-general}
	Let $\mathcal{X}^*$ be non-empty. If zero is the unique solution of $Ax-B|x|=0$, then
	there exists $\kappa> 0$ such that
	\begin{equation}\label{EB-L}
		\kappa \operatorname{dist}_{\mathcal{X}^*}(x)\leq \| Ax-B|x|-b\|_2, \ \forall x\in\mathbb{R}^n.
	\end{equation}
\end{theorem}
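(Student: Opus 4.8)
The plan is to derive a global error bound from the homogeneous uniqueness hypothesis by a contradiction-plus-compactness argument, exploiting piecewise linearity. First I would suppose the bound fails: then for every $k$ there is $x^k$ with $\|Ax^k - B|x^k| - b\|_2 < \frac{1}{k}\operatorname{dist}_{\mathcal{X}^*}(x^k)$. In particular $\operatorname{dist}_{\mathcal{X}^*}(x^k) > 0$, so $x^k \notin \mathcal{X}^*$, and $\|x^k\|_2 \to \infty$ (otherwise a bounded subsequence would have a limit point $\bar x$ with $F_{A,B}(\bar x) = b$, forcing $\operatorname{dist}_{\mathcal{X}^*}(x^k)$ bounded while the residual $\to 0$, and the local error bound \eqref{localEB} would already give a contradiction). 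Set $y^k := x^k / \|x^k\|_2$; pass to a subsequence with $y^k \to y$, $\|y\|_2 = 1$. Crucially, since $|x^k|$ is positively homogeneous, $F_{A,B}(x^k)/\|x^k\|_2 = A y^k - B|y^k| = F_{A,B}(y^k)$, and $\|F_{A,B}(x^k)\|_2 \le \|b\|_2 + \frac{1}{k}\operatorname{dist}_{\mathcal{X}^*}(x^k)$.

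The next step is to control $\operatorname{dist}_{\mathcal{X}^*}(x^k)$ linearly in $\|x^k\|_2$: fixing any $x^* \in \mathcal{X}^*$, we get $\operatorname{dist}_{\mathcal{X}^*}(x^k) \le \|x^k - x^*\|_2 \le \|x^k\|_2 + \|x^*\|_2$. Dividing the defining inequality of $x^k$ by $\|x^k\|_2$ yields $\|F_{A,B}(y^k)\|_2 \le \frac{1}{k}\left(1 + \frac{\|x^*\|_2}{\|x^k\|_2}\right) \to 0$. By continuity of $F_{A,B}$, $F_{A,B}(y) = Ay - B|y| = 0$ with $\|y\|_2 = 1$, contradicting the hypothesis that zero is the unique solution of $Ax - B|x| = 0$. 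This closes the argument, modulo justifying the claim $\|x^k\|_2 \to \infty$.

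For that claim: if some subsequence of $\{x^k\}$ were bounded, it would have a convergent sub-subsequence $x^{k_j} \to \bar x$; then $F_{A,B}(\bar x) = \lim F_{A,B}(x^{k_j}) = b$ since the residuals tend to $0$ (they are bounded by $\frac{1}{k_j}$ times a bounded quantity $\operatorname{dist}_{\mathcal{X}^*}(x^{k_j})$, which is bounded on a bounded sequence). So $\bar x \in \mathcal{X}^*$ and $\operatorname{dist}_{\mathcal{X}^*}(x^{k_j}) \to 0$; then for $j$ large the residual satisfies $\|F_{A,B}(x^{k_j}) - b\|_2 < \varepsilon$, so the local error bound \eqref{localEB} applies and gives $\kappa \operatorname{dist}_{\mathcal{X}^*}(x^{k_j}) \le \|F_{A,B}(x^{k_j}) - b\|_2 < \frac{1}{k_j}\operatorname{dist}_{\mathcal{X}^*}(x^{k_j})$, impossible once $k_j > 1/\kappa$ (note $\operatorname{dist}_{\mathcal{X}^*}(x^{k_j}) > 0$). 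Hence $\|x^k\|_2 \to \infty$.

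The main obstacle is the normalization step and its interaction with the distance term: one must be careful that $\operatorname{dist}_{\mathcal{X}^*}(x^k)$ grows at most linearly in $\|x^k\|_2$ so that division by $\|x^k\|_2$ genuinely kills the right-hand side — this is where fixing a single reference solution $x^* \in \mathcal{X}^*$ and the triangle inequality do the work. A secondary subtlety is that $\mathcal{X}^*$ need not be bounded in general, so $\operatorname{dist}_{\mathcal{X}^*}$ is an infimum over a possibly unbounded set; however the linear upper bound via any fixed $x^*$ sidesteps this, and lower semicontinuity of $x \mapsto \operatorname{dist}_{\mathcal{X}^*}(x)$ together with $\mathcal{X}^*$ being closed (it is the solution set of a piecewise-linear system, hence a finite union of polyhedra) suffices for the limit-point argument. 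One should also record that $F_{A,B}$ is Lipschitz (indeed piecewise linear), which makes all the continuity invocations immediate.
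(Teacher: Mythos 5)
Your proposal is correct and follows essentially the same route as the paper's proof: argue by contradiction, use the local error bound \eqref{localEB} to force $\|x^k\|_2\to\infty$, normalize $x^k/\|x^k\|_2$, and pass to a limit to produce a nonzero solution of $Ax-B|x|=0$, contradicting the hypothesis. The only nitpick is that your displayed inequality after dividing by $\|x^k\|_2$ should retain the vanishing term $\|b\|_2/\|x^k\|_2$ (i.e., bound $\left\|F_{A,B}(y^k)-b/\|x^k\|_2\right\|_2$ rather than $\|F_{A,B}(y^k)\|_2$), which changes nothing in the limit.
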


\begin{proof}
	The idea of the proof is similar to that of Theorem $12$ in \cite{zamani2023error} or Theorem  2.1 in \cite{mangasarian1994new}. Suppose to the
	contrary that \eqref{EB-L} does not hold. Hence, for each $k\in\mathbb{N}$, there exists $x^k$ such that for any fixed $\bar{x}\in \mathcal{X}^*$
	\begin{equation}\label{EB-prf1}
		\|x^k-\bar{x}\|_2\geq \text{dist}_{\mathcal{X}^*}(x^k)>k\| Ax^k-B|x^k|-b\|_2.
	\end{equation}
	Due to the local error bounds property \eqref{localEB}, there exists $\varepsilon$ such that
	$\| Ax^k-B|x^k|-b\|_2>\varepsilon$ for each $k\geq k_0$, where $k_0$ is sufficiently large. Consequently, $\|x^k-\bar{x}\|_2$ tends to infinity as $k\to\infty$. Choosing subsequences if necessary, we may assume that $\frac{x^k}{\|x^k\|_2}$ goes to a non-zero vector $d$. By dividing both sides of \eqref{EB-prf1} by $k\|x^k\|_2$ and taking the limit as $k$ goes to infinity, we get
	$$
	Ad-B|d|=0,
	$$
	which contradicts the assumptions.
\end{proof}

Theorem \ref{THM-suf} (ii) and (iii) have already provided sufficient conditions to guarantee that zero is the unique solution of $Ax-B|x|=0$. In addition to those results, the following proposition provides a simpler sufficient condition to ensure the uniqueness of the solution set of $Ax-B|x|=0$.

\begin{proposition}\label{prop-xie-24-2-3}
	Suppose there exists an index $i\in[m]$ such that  $|A_{i,:}|<B_{i,:}$. Then zero is the unique solution of $Ax-B|x|=0$.
\end{proposition}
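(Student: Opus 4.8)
The plan is to show that under the hypothesis $|A_{i,:}| < B_{i,:}$, any solution $x$ of $Ax - B|x| = 0$ must satisfy $x = 0$. The key is to look at the $i$-th component of the equation, which reads $A_{i,:}\, x = B_{i,:}\, |x|$. First I would bound the left-hand side: for each $j\in[n]$ we have $A_{i,j} x_j \le |A_{i,j}|\,|x_j|$, hence
\begin{equation}\label{prop-prf-1}
	A_{i,:}\, x = \sum_{j=1}^n A_{i,j} x_j \le \sum_{j=1}^n |A_{i,j}|\, |x_j| = |A_{i,:}|\, |x|.
\end{equation}
On the other hand, the $i$-th component of the equation gives $A_{i,:}\, x = B_{i,:}\, |x| = \sum_{j=1}^n B_{i,j}|x_j|$. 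Combining this with \eqref{prop-prf-1} yields $\sum_{j=1}^n B_{i,j}|x_j| \le \sum_{j=1}^n |A_{i,j}|\,|x_j|$, i.e. $\sum_{j=1}^n (B_{i,j} - |A_{i,j}|)\,|x_j| \le 0$. Since $|A_{i,:}| < B_{i,:}$ means $B_{i,j} - |A_{i,j}| > 0$ for every $j$, and $|x_j| \ge 0$, every term in this sum is nonnegative, so the only way the sum can be $\le 0$ is for each term to vanish: $|x_j| = 0$ for all $j$, i.e. $x = 0$. Conversely $x=0$ obviously solves $Ax - B|x| = 0$, so zero is the unique solution.

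There is essentially no obstacle here — the argument is a short sign/triangle-inequality computation restricted to a single, favorably chosen row. The only point requiring a modicum of care is the bookkeeping with signs: one must use $A_{i,j} x_j \le |A_{i,j}|\,|x_j|$ (valid for every real $x_j$, regardless of sign) rather than equality, and then exploit the strict inequality $B_{i,j} > |A_{i,j}|$ together with $|x_j| \ge 0$ to force each $|x_j|$ to be zero. No appeal to the earlier results (solvability theorems, singular value conditions, or the lemmas on sketching matrices) is needed; the proposition is a self-contained elementary fact, which is presumably why it is offered as a ``simpler sufficient condition'' than Theorem~\ref{THM-suf}(ii)--(iii).
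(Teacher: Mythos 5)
Your proof is correct and follows essentially the same argument as the paper: the paper also works only with the $i$-th row, using $A_{i,j}x_j \le |A_{i,j}|\,|x_j| < B_{i,j}|x_j|$ for $x_j\neq 0$ to conclude $A_{i,:}x - B_{i,:}|x| < 0$ whenever $x\neq 0$. Your contrapositive-style bookkeeping via $\sum_j (B_{i,j}-|A_{i,j}|)|x_j|\le 0$ is just a rephrasing of that same one-row estimate.
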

\begin{proof}
	Based on the assumption, for any  $j\in[n]$, it holds that $A_{i,j}x_j<B_{i,j}|x_j|$ if $x_j\neq0$. Consequently, if $x\neq 0$, we have $A_{i,:}x-B_{i,:}|x|<0$. Thus,  if $x\neq 0$, we have $Ax-B|x|\neq 0$. Therefore, zero is the unique solution of $Ax-B|x|=0$.
\end{proof}

\begin{theorem}\label{EB-full-rank}
	Suppose that  $\mathcal{X}^*$ is non-empty and for any $D\in[-I_n,I_n]$, the matrix $A+BD$ is full column rank.  Then for any $x^*\in\mathcal{X}^*$ and nonsingular matrix $M\in\mathbb{R}^{m\times m}$,
	$$
	\|x-x^*\|\leq \max\limits_{D\in[-I_n,I_n]}\|\left(MA+MBD\right)^{\dagger}\|\cdot \|M( Ax-B|x|-b)\|, \ \forall x\in\mathbb{R}^n,
	$$
	where $\|\cdot\|$ represents any vector norm and its induced norm.
\end{theorem}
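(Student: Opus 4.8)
The plan is to exploit the elementary identity $|x| = D x$ for a suitable diagonal sign matrix $D$ depending on $x$. Given any $x \in \mathbb{R}^n$, choose $s = s(x) \in \{-1,1\}^n$ so that $s_j = \operatorname{sign}(x_j)$ whenever $x_j \neq 0$ (and arbitrary otherwise), and set $D_x := \operatorname{diag}(s)$. Then $D_x \in [-I_n, I_n]$ and $|x| = D_x x$, so for the residual at $x$ we may write $Ax - B|x| - b = (A + B(-D_x))x - b$. Wait --- more carefully, $Ax - B|x| = Ax - B D_x x = (A - BD_x)x$, and since $-D_x$ also lies in $[-I_n, I_n]$, it is cleanest to just say $A - BD_x = A + B\tilde D$ with $\tilde D = -D_x \in [-I_n, I_n]$.

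The key step is to do the same for a fixed solution $x^* \in \mathcal{X}^*$, but using the \emph{same} sign matrix $D_x$ whenever possible, and here one must be a little careful. The trick used in Zamani--Hlad\'ik is: for each index $j$, pick $s_j$ to be a common sign of $x_j$ and $x^*_j$ when they share one, but if they have strictly opposite signs this is impossible. The standard resolution is the following convexity/mean-value observation: for scalars $u, v$ one has $|u| - |v| = d\,(u - v)$ for some $d \in [-1,1]$ (indeed $d = \operatorname{sign}(u)$ if $uv \ge 0$, and in general $d = \frac{|u|-|v|}{u-v}$ lies in $[-1,1]$ when $u \neq v$). Applying this coordinatewise produces a single diagonal matrix $D = D(x, x^*) \in [-I_n, I_n]$ with $|x| - |x^*| = D(x - x^*)$. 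Then
$$
M(Ax - B|x| - b) = M(Ax - B|x|) - M(Ax^* - B|x^*|) = M(A + B(-D))(x - x^*) = (MA + MB D')(x - x^*),
$$
where $D' = -D \in [-I_n, I_n]$, using that $Ax^* - B|x^*| = b$. Since $A + BD'$ is full column rank by hypothesis, so is $MA + MBD'$ ($M$ nonsingular), hence $(MA + MBD')^\dagger (MA + MBD') = I_n$, and therefore $x - x^* = (MA + MBD')^\dagger M(Ax - B|x| - b)$.

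Taking any vector norm $\|\cdot\|$ and its induced matrix norm gives
$$
\|x - x^*\| \le \|(MA + MBD')^\dagger\| \cdot \|M(Ax - B|x| - b)\| \le \max_{D \in [-I_n,I_n]} \|(MA + MBD)^\dagger\| \cdot \|M(Ax - B|x| - b)\|,
$$
which is exactly the claimed bound. The main obstacle --- really the only subtle point --- is justifying the existence of the single diagonal matrix $D \in [-I_n, I_n]$ with $|x| - |x^*| = D(x-x^*)$; this is the coordinatewise mean-value argument for the absolute value function, and one must handle the case $x_j = x^*_j$ (where the ratio is $0/0$ and any $d \in [-1,1]$, say $d = 0$, works). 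One should also note that the maximum over the compact set $[-I_n,I_n]$ is attained (and finite) precisely because $A + BD$ is full column rank for every such $D$, so $D \mapsto \|(MA+MBD)^\dagger\|$ is continuous on this compact set; this is where the full-rank hypothesis for \emph{all} $D \in [-I_n, I_n]$, not just the one arising from $x, x^*$, is used.
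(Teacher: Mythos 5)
Your proof is correct, and it reaches the paper's key identity by a more elementary route. The paper proves the same statement by first establishing (its Lemma \ref{EB-MV}) that for any $x,y$ there is a single $\hat D\in[-I_n,I_n]$ with $F_{A,B}(x)-F_{A,B}(y)=(A+B\hat D)(x-y)$, and it obtains this via the nonsmooth mean value theorem for locally Lipschitz maps (Clarke generalized Jacobians, Lemma \ref{lemma-MVT}) together with the convexity of the set $\{A+B\operatorname{diag}(d):\|d\|_\infty\le 1\}$, which contains $\partial F_{A,B}$; after that, the full-column-rank hypothesis and the left-inverse property of the pseudoinverse give exactly your chain $x-x^*=(MA+MB\hat D)^\dagger M(Ax-B|x|-b)$ and the bound by the maximum. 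You instead exploit the separability of the absolute value and the scalar secant-slope fact $|u|-|v|=d\,(u-v)$ with $d\in[-1,1]$ (taking $d=0$ when $u=v$), which yields the diagonal matrix $D$ with $|x|-|x^*|=D(x-x^*)$ directly, with no nonsmooth-analysis machinery. What the elementary construction buys is self-containedness and an explicit $D$; what the paper's route buys is a statement (Lemma \ref{EB-MV}) valid verbatim for the general map $F_{A,B}$ via standard generalized-Jacobian calculus, in the spirit of Zamani--Hlad\'{\i}k. Your added remark that the maximum over the compact set $[-I_n,I_n]$ is attained because $D\mapsto\|(MA+MBD)^\dagger\|$ is continuous under the constant-rank (full column rank) hypothesis is a point the paper leaves implicit, and it is correct. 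The only cosmetic issue is the digression in your first paragraph about writing $|x|=D_xx$ for $x$ alone, which is not needed once you apply the secant-slope argument to the difference $|x|-|x^*|$.
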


Note that under the assumption of Theorem \ref{EB-full-rank}, it holds that $m\geq n$. Assuming $m=n$ and if the matrix $A + BD$ is full column rank (nonsingular) for every $D\in[-I_n,I_n]$, Theorem \ref{unique-solution} ensures a unique solution to the GAVE \eqref{GAVE}. Therefore, Theorem \ref{EB-full-rank} yields the following corollary.

\begin{corollary}\label{EB-full-1}
	Assume that $m=n$ and $A + BD$ is nonsingular for any $D\in[-I_n,I_n]$. Then for any nonsingular matrix $M\in\mathbb{R}^{n\times n}$,
	$$
	\|x-x^*\|\leq \max\limits_{D\in[-I_n,I_n]}\|\left(MA+MBD\right)^{-1}\|\cdot\| M(Ax-B|x|-b)\|, \ \forall x\in\mathbb{R}^n,
	$$
	where $\|\cdot\|$ represents any vector norm and its induced norm.
\end{corollary}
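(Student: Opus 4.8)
The plan is to linearize the absolute-value term $|x|-|x^*|$ coordinatewise via a mean-value trick, and then use the full-column-rank hypothesis to cancel the resulting matrix from the left by its pseudoinverse.

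First I would fix $x\in\mathbb{R}^n$ and $x^*\in\mathcal{X}^*$, so that $Ax^*-B|x^*|=b$. For each $j\in[n]$ set $d_j:=\bigl(|x_j|-|x_j^*|\bigr)/(x_j-x_j^*)$ if $x_j\neq x_j^*$ and $d_j:=0$ otherwise; the reverse triangle inequality $\bigl|\,|x_j|-|x_j^*|\,\bigr|\leq|x_j-x_j^*|$ gives $|d_j|\leq1$, so $D:=\operatorname{diag}(d_1,\dots,d_n)\in[-I_n,I_n]$ and $|x|-|x^*|=D(x-x^*)$ by construction. Subtracting $Ax^*-B|x^*|=b$ and multiplying by $M$ then gives
$$M\bigl(Ax-B|x|-b\bigr)=MA(x-x^*)-MBD(x-x^*)=(MA-MBD)(x-x^*)=(MA+MB\widehat{D})(x-x^*),$$
where $\widehat{D}:=-D\in[-I_n,I_n]$.

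Next I would invoke the hypothesis: $A+B\widehat{D}$ has full column rank $n$, and since $M$ is nonsingular so does $MA+MB\widehat{D}=M(A+B\widehat{D})$; hence $(MA+MB\widehat{D})^{\dagger}(MA+MB\widehat{D})=I_n$. Left-multiplying the displayed identity by $(MA+MB\widehat{D})^{\dagger}$ yields $x-x^*=(MA+MB\widehat{D})^{\dagger}M(Ax-B|x|-b)$, and by the definition of the induced matrix norm
$$\|x-x^*\|\leq\bigl\|(MA+MB\widehat{D})^{\dagger}\bigr\|\cdot\bigl\|M(Ax-B|x|-b)\bigr\|.$$
Since $\widehat{D}\in[-I_n,I_n]$, the first factor is bounded by $\max_{D\in[-I_n,I_n]}\|(MA+MBD)^{\dagger}\|$, which is the claimed bound.

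The only loose ends are that this maximum is finite and attained --- which follows because $[-I_n,I_n]$ is compact and the Moore--Penrose pseudoinverse is continuous on the set of $m\times n$ matrices of a fixed rank, all $MA+MBD$ having the constant rank $n$ by hypothesis --- and the harmless substitution $D\leftrightarrow-D$, absorbed by the symmetry of $[-I_n,I_n]$ under negation. I expect no serious obstacle here: the single substantive idea is the coordinatewise mean-value linearization of $|x|-|x^*|$, and the rest is routine bookkeeping with the pseudoinverse and induced norms.
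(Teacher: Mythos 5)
Your proof is correct, and it reaches the bound by a genuinely more elementary route than the paper. The paper derives this corollary from Theorem \ref{EB-full-rank}, whose engine is Lemma \ref{EB-MV}: the existence of $\hat{D}\in[-I_n,I_n]$ with $F_{A,B}(x)-F_{A,B}(x^*)=(A+B\hat{D})(x-x^*)$ is obtained there from the nonsmooth mean value theorem of Hiriart--Urruty (Lemma \ref{lemma-MVT}) applied to the Clarke generalized Jacobian, together with the convexity of the set $\{A+B\,\mathrm{diag}(d):\|d\|_\infty\le 1\}$. You bypass that machinery entirely by exploiting the separability of $x\mapsto|x|$: the explicit scalar quotients $d_j=(|x_j|-|x_j^*|)/(x_j-x_j^*)$ (with $d_j=0$ when $x_j=x_j^*$) give the exact componentwise identity $|x|-|x^*|=D(x-x^*)$ with $\|D\|_\infty\le1$, after which the argument is identical to the paper's: factor out $MA+MB\hat{D}$ with $\hat{D}=-D$, invert (your pseudoinverse reduces to the inverse since $m=n$ and $A+B\hat{D}$ is nonsingular by hypothesis, with $M$ nonsingular), and bound by the induced norm and the maximum over the compact symmetric set $[-I_n,I_n]$, whose finiteness you correctly justify. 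What the paper's route buys is generality --- the mean value argument does not depend on the nonsmooth term being coordinatewise separable and so would survive replacing $|x|$ by other piecewise-smooth maps --- while your construction buys self-containedness and in fact yields the stronger exact linearization rather than a convex combination of Jacobians; note also that your argument, stated with the pseudoinverse, proves Theorem \ref{EB-full-rank} itself, not just the square case. The only cosmetic point is that you tacitly take $x^*\in\mathcal{X}^*$; under the corollary's hypothesis its existence (and uniqueness) is guaranteed by Theorem \ref{unique-solution}, as the paper remarks just before the corollary.
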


\begin{remark} \label{Remark-Error-Bounds}
	If $B=I$ and $M=I$, Corollary \ref{EB-full-1} recovers \cite[Thorem~7]{zamani2023error}, which provides an error bound for the standard system of AVE.
	Furthermore, the nonsingular matrix $M$ acts like a preconditioner so that the following two systems are
	equivalent:
	\[
	Ax - B|x| = b \qquad \Longleftrightarrow \qquad
	MAx - MB |x| = Mb .
	\]
	If we further assume that $B$ is nonsingular, we may choose $M=B^{-1}$ so that the system $B^{-1}A x - |x| = B^{-1}b$
	holds. For this type of AVE system,\cite[Theorem~7]{zamani2023error} implies the following
	\begin{align*}
		\| x - x^*\| & \le \max_{D \in [-I_n, I_n]} \| (B^{-1}A + D)^{-1} \| \| B^{-1}A - |x| - B^{-1}b\|\\
		& = \max_{D \in [-I_n, I_n]} \| (MA + MBD)^{-1} \| \| M(A - B |x| - b)\|.
	\end{align*}
	Therefore, under the assumption that $B$ is nonsingular, Corollary~\ref{EB-full-1}
	with the particular choice $M=B^{-1}$ can be derived from
	\cite[Thorem~7]{zamani2023error}. We note that our result holds for any nonsingular $M$ with $B$ not necessarily
	nonsingular.
\end{remark}

Theorem \ref{EB-full-rank} also yields the following corollary.

\begin{corollary}\label{EB-2-norm}
	Let $\mathcal{X}^*$ be non-empty and $M\in\mathbb{R}^{m\times m}$ is a nonsingular matrix. Suppose that $m\geq n$ and $\sigma_{n}(MA)>\|MB\|_2$. Then
	$$
	\| x-x^*\|_2\leq \frac{1}{\sigma_{n}(MA)-\|MB\|_2} \| Ax-B|x|-b\|_{M^\top M}.
	$$
\end{corollary}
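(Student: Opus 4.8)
The plan is to obtain Corollary~\ref{EB-2-norm} as a direct consequence of Theorem~\ref{EB-full-rank}, specialized to the Euclidean norm, once we control the quantity $\max_{D\in[-I_n,I_n]}\|(MA+MBD)^{\dagger}\|_2$ by means of the singular‑value perturbation bound in Lemma~\ref{singular-value-inequ}. Since $m\ge n$, the relevant smallest singular value is $\sigma_n$, matching the statement.

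First I would check that the hypotheses of Theorem~\ref{EB-full-rank} hold, i.e. that $A+BD$ is full column rank for every $D\in[-I_n,I_n]$. Because $M$ is nonsingular, $A+BD$ is full column rank if and only if $M(A+BD)=MA+MBD$ is. Every $D\in[-I_n,I_n]$ is diagonal with diagonal entries in $[-1,1]$, so $\|D\|_2\le 1$, and by submultiplicativity of the spectral norm $\|MBD\|_2\le\|MB\|_2\|D\|_2\le\|MB\|_2$. Applying Lemma~\ref{singular-value-inequ} with $P=MA$, $Q=MBD$ and $i=n$ gives
$$
\sigma_n(MA+MBD)\ \ge\ \sigma_n(MA)-\|MBD\|_2\ \ge\ \sigma_n(MA)-\|MB\|_2\ >\ 0 ,
$$
so $MA+MBD$, hence $A+BD$, has full column rank for all $D\in[-I_n,I_n]$, and Theorem~\ref{EB-full-rank} applies with the norm taken to be $\|\cdot\|_2$.

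Next, for an $m\times n$ matrix $C$ of full column rank with $m\ge n$ one has $\|C^{\dagger}\|_2=1/\sigma_n(C)$ (immediate from the SVD). Combining this with the displayed inequality yields, uniformly in $D$,
$$
\|(MA+MBD)^{\dagger}\|_2\ =\ \frac{1}{\sigma_n(MA+MBD)}\ \le\ \frac{1}{\sigma_n(MA)-\|MB\|_2},
$$
hence $\max_{D\in[-I_n,I_n]}\|(MA+MBD)^{\dagger}\|_2\le 1/(\sigma_n(MA)-\|MB\|_2)$. Plugging this into the estimate of Theorem~\ref{EB-full-rank} gives $\|x-x^*\|_2\le \frac{1}{\sigma_n(MA)-\|MB\|_2}\,\|M(Ax-B|x|-b)\|_2$. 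Finally I would rewrite the residual: for any $y\in\mathbb{R}^m$, $\|My\|_2^2=y^{\top}M^{\top}My=\|y\|_{M^{\top}M}^2$, so $\|M(Ax-B|x|-b)\|_2=\|Ax-B|x|-b\|_{M^{\top}M}$, which is exactly the claimed bound.

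The argument is essentially routine given Theorem~\ref{EB-full-rank}; the only steps needing a little care are the uniform lower bound on $\sigma_n(MA+MBD)$ over the interval matrix (where Lemma~\ref{singular-value-inequ} and $\|D\|_2\le 1$ do the work) and the elementary identity $\|C^{\dagger}\|_2=1/\sigma_{\min}(C)$ for full‑column‑rank $C$. I do not anticipate any genuine obstacle.
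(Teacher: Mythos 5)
Your proposal is correct and follows essentially the same route as the paper's proof: both use Lemma \ref{singular-value-inequ} with $\|D\|_2\le 1$ to get the uniform lower bound $\sigma_n(MA+MBD)\ge\sigma_n(MA)-\|MB\|_2>0$, deduce full column rank and the pseudoinverse bound, and then apply Theorem \ref{EB-full-rank} with the Euclidean norm together with the identity $\|M y\|_2=\|y\|_{M^\top M}$. No gaps to report.
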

\begin{proof}
	For any $D\in[-I_n,I_n]$, we have  
	$$\sigma_{n}(MA+MBD)\geq\sigma_{n}(MA)-\|MBD\|_2\geq \sigma_{n}(MA)-\|MB\|_2>0,$$
	where the first inequality follows from Lemma \ref{singular-value-inequ} and the second inequality follows from $\|D\|_2\leq 1$.
	Therefore, the matrix $MA+MBD$ has full column rank, and thus, $A+BD$ is also full column rank since $M$ is nonsingular.
	Moreover, we have
	$$\max\limits_{D\in[-I_n,I_n]}\|\left(MA+MBD\right)^{\dagger}\|_2=\sigma^{-1}_{n}(MA+MBD)\leq \frac{1}{\sigma_{n}(MA)-\|MB\|_2}.$$
	Hence from Theorem \ref{EB-full-rank}, we have 
	$$
	\begin{aligned}
		\| x-x^*\|_2&\leq\frac{1}{\sigma_{\min}(MA)-\|MB\|_2} \| M(Ax-B|x|-b)\|_2\\
		&= \frac{1}{\sigma_{n}(MA)-\|MB\|_2} \| Ax-B|x|-b\|_{M^\top M}
	\end{aligned}
	$$
	as desired.
\end{proof}

\section{Randomized iterative methods for GAVE}
\label{sect:4}

In this section, we present our randomized iterative method for solving the GAVE \eqref{GAVE} and analyze its convergence properties. Randomization is incorporated into our method through a user-defined probability space $(\Omega,\mathcal{F},\mathbf{P})$ that describes an ensemble of randomized sketching matrices $S\in\mathbb{R}^{m\times q}$. The selection of the probability space should ideally be based on the specific problem at hand, as it can impact the error bounds and convergence rates of the method.  Our approach and underlying theory accommodate a wide range of probability distributions; see Section \ref{sect-examples} for further discussion.

At the $k$-th iteration, we consider the following stochastic optimization problem
\begin{equation}
	\label{sop-1}
	\mathop{\min}\limits_{x \in \mathbb{R}^n} f^k(x):=\mathop{\mathbb{E}}\limits_{S\in\Omega}\left[ f^k_S(x)\right],
\end{equation}
where $f^k_S(x):=\frac{1}{2} \left\|S^\top(Ax-B|x^k|-b)\right\|^2_2$ with $S$ being a random variable in $(\Omega, \mathcal{F}, \mathbf{P})$.  Starting from $x^k$, we employ only one step of the SGD method to solve the stochastic optimization problem \eqref{sop-1}
$$
x^{k+1}=x^k-\alpha_k\nabla f^k_{S_k}(x^k)=x^k-\alpha_k A^\top S_k S_k^\top(Ax^k-B|x^k|-b),
$$
where $S_k$ is drawn from the sample space $\Omega$ and $\alpha_k$ is the step-size. Particularly, we choose $\alpha_k=\alpha/\|S_k^\top A\|^2_2$ with $0<\alpha\leq 1$. 
Now, we are ready to state the randomized iterative method for solving the GAVE \eqref{GAVE} described in Algorithm \ref{RIM}. 

\begin{algorithm}[htpb]
	\caption{Randomized iterative method (RIM) for GAVE}
	\label{RIM}
	\begin{algorithmic}
		\Require $A, B \in \mathbb{R}^{m \times n}$, $b \in \mathbb{R}^m$, probability spaces $(\Omega, \mathcal{F}, \mathbf{P})$, $\alpha\in(0,1]$, $k=0$, and the initial point $x^0 \in \mathbb{R}^n$.
		\begin{enumerate}
			\item[1.] Randomly select a randomized sketching matrix $S_k\in \Omega$.
			\item[2.]  Update 
			\begin{equation}\label{one-step}
				x^{k+1}=x^k-\alpha\frac{A^\top S_kS_k^\top (Ax^k-B|x^k|-b)}{\|S_k^\top A\|^2_2}.
			\end{equation}
			\item[3.] If a stopping rule is satisfied, stop and go to output. Otherwise, set $k=k+1$ and return to Step $1$.
		\end{enumerate}
		
		\Ensure
		The approximate solution $x^k$.
	\end{algorithmic}
\end{algorithm}

\subsection{Convergence analysis}

Let $x^*$ be any element of $\mathcal{X}^*$. For Algorithm \ref{RIM}, we establish the following convergence result. The parameters $\alpha$ and $\xi$ are chosen to ensure that $0 < \alpha \leq 1$, which is a necessary condition for convergence.  

\begin{theorem}
	\label{THM-asym}
	Assume that $\mathcal{X}^*$ is nonempty and the probability spaces $(\Omega, \mathcal{F}, \mathbf{P})$ satisfy Assumption \ref{Ass}. Let $H=\mathbb{E}\left[\frac{SS^\top}{\|S^\top A\|^2_2}\right]$ and  $\{x^k\}_{k \geq 0}$ be the iteration sequence generated by Algorithm \ref{RIM}.
	\begin{itemize}
		\item[(\romannumeral1)] If $\sigma_{n}(H^{\frac{1}{2}}A)=\|H^{\frac{1}{2}}B\|_2$ and $\alpha\in(0,1)$, then at least one subsequence of $\{x^k\}_{k\geq0}$ converges a.s. to a point in the set $\mathcal{X}^*$ and $Ax^k-B|x^k|-b$ converges a.s. to zero.
		\item[(\romannumeral2)] If $\sigma_{n}(H^{\frac{1}{2}}A)>\|H^{\frac{1}{2}}B\|_2$ and $\alpha=(2-\xi)\frac{\sigma_{n}(H^{\frac{1}{2}}A)}{\sigma_{n}(H^{\frac{1}{2}}A)-\|H^{\frac{1}{2}}B\|_2}$ with $\xi\in\left[\frac{\sigma_{n}(H^{\frac{1}{2}}A)+\|H^{\frac{1}{2}}B\|_2}{\sigma_{n}(H^{\frac{1}{2}}A)},2\right)$,
		then $x^*$ is the unique solution and 
		$$	
		\mathbb{E}\left[\|x^k-x^*\|^2_2\right]\leq \left(1-(2-\xi)\xi\sigma_{n}^2(H^{\frac{1}{2}}A)\right)^k \|x^0-x^*\|^2_2.
		$$
		\item[(\romannumeral3)] If zero is the unique solution of $Ax-B|x|=0$, $\sigma_{n}^2(H^{\frac{1}{2}}A)>\|H^{\frac{1}{2}}B\|^2_2-\lambda_{\min}(H)\kappa^2$ with $\kappa$ being given in Theorem \ref{EB-general},  and $\alpha=(2-\xi)\frac{\sigma_{n}^2(H^{\frac{1}{2}}A)-\|H^{\frac{1}{2}}B\|^2_2+\lambda_{\min}(H)\kappa^2}{2\lambda_{\min}(H)\kappa^2}$ with $\xi\in\left[2\frac{\sigma_{n}^2(H^{\frac{1}{2}}A)-\|H^{\frac{1}{2}}B\|^2_2}{\sigma_{n}^2(H^{\frac{1}{2}}A)-\|H^{\frac{1}{2}}B\|^2_2+\lambda_{\min}(H)\kappa^2},2\right)$, then 
		$$	
		\mathbb{E}\left[\operatorname{dist}^2_{\mathcal{X^*}}(x^k)\right]\leq \left(1-(2-\xi)\xi\frac{\sigma_{n}^2(H^{\frac{1}{2}}A)-\|H^{\frac{1}{2}}B\|^2_2+\lambda_{\min}(H)\kappa^2}{4\lambda_{\min}(H)\kappa^2}\right)^k \operatorname{dist}^2_{\mathcal{X^*}}(x^0).
		$$
	\end{itemize}
\end{theorem}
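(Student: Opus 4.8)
\textbf{Proof proposal for Theorem \ref{THM-asym}.}

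The plan is to treat the three parts through a single unifying recursion. Starting from the update \eqref{one-step}, fix $x^* \in \mathcal{X}^*$ (for part (iii), instead track $\operatorname{dist}_{\mathcal{X}^*}$ and pick the nearest point). Writing $r^k := Ax^k - B|x^k| - b = (Ax^k - B|x^k|) - (Ax^* - B|x^*|)$, expand
\[
\|x^{k+1}-x^*\|_2^2 = \|x^k-x^*\|_2^2 - 2\alpha \left\langle x^k - x^*, \frac{A^\top S_k S_k^\top r^k}{\|S_k^\top A\|_2^2}\right\rangle + \alpha^2 \frac{\|A^\top S_k S_k^\top r^k\|_2^2}{\|S_k^\top A\|_2^4}.
\]
First I would take conditional expectation given $x^k$. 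The cross term produces $\langle x^k - x^*, A^\top H r^k\rangle = \langle A(x^k-x^*), H r^k\rangle$; since $A(x^k - x^*) = r^k + B(|x^k| - |x^*|)$, this equals $\|r^k\|_H^2 + \langle B(|x^k|-|x^*|), H r^k\rangle$ (using $\|v\|_H^2 := v^\top H v$, consistent with the paper's $\|\cdot\|_M$ notation for $M = H$). For the quadratic term I would bound $\|A^\top S_k S_k^\top r^k\|_2^2 \le \|S_k^\top A\|_2^2 \, \|S_k^\top r^k\|_2^2$ via $\|A^\top S_k v\|_2 \le \|S_k^\top A\|_2 \|v\|_2$, so after dividing by $\|S_k^\top A\|_2^4$ and taking expectation it is at most $\|r^k\|_H^2$. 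This yields the master inequality
\[
\mathbb{E}\big[\|x^{k+1}-x^*\|_2^2 \,\big|\, x^k\big] \le \|x^k-x^*\|_2^2 - (2\alpha - \alpha^2)\|r^k\|_H^2 - 2\alpha \langle B(|x^k|-|x^*|), H r^k\rangle.
\]

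Next I would control the two terms involving $r^k$ from below. Writing $H^{1/2}$ and $w^k := H^{1/2} r^k = H^{1/2}A(x^k-x^*) - H^{1/2}B(|x^k|-|x^*|)$, Cauchy–Schwarz gives $\langle B(|x^k|-|x^*|), H r^k\rangle = \langle H^{1/2}B(|x^k|-|x^*|), w^k\rangle \ge -\|H^{1/2}B\|_2 \||x^k|-|x^*|\|_2 \|w^k\|_2 \ge -\|H^{1/2}B\|_2 \|x^k - x^*\|_2 \|w^k\|_2$, using the nonexpansiveness $\||x^k|-|x^*|\|_2 \le \|x^k-x^*\|_2$. Also $\|w^k\|_2 = \|r^k\|_H \ge \sigma_n(H^{1/2}A)\|x^k-x^*\|_2 - \|H^{1/2}B\|_2\|x^k-x^*\|_2$ when that quantity is nonnegative, and more crudely $\|r^k\|_H \le (\sigma_1(H^{1/2}A) + \|H^{1/2}B\|_2)\|x^k-x^*\|_2$. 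Substituting these into the master inequality and collecting the coefficient of $\|x^k-x^*\|_2^2$, the stepsize conditions in (i)–(iii) are exactly what is needed to make that coefficient lie in $[0,1)$ (strictly $<1$ in (ii),(iii)). For part (ii) this directly gives the geometric rate after taking total expectation and iterating; the specific constant $1-(2-\xi)\xi\sigma_n^2(H^{1/2}A)$ should drop out of the algebra once $\alpha$ is plugged in. For part (iii), instead of the singular-value bound I would invoke the error bound from Theorem \ref{EB-general}: $\operatorname{dist}_{\mathcal{X}^*}(x^k) \le \kappa^{-1}\|r^k\|_2 \le \kappa^{-1}\lambda_{\min}(H)^{-1/2}\|r^k\|_H$, i.e. $\|r^k\|_H^2 \ge \lambda_{\min}(H)\kappa^2 \operatorname{dist}_{\mathcal{X}^*}(x^k)^2$, while also using $\|r^k\|_H^2 \ge (\sigma_n^2(H^{1/2}A) - \|H^{1/2}B\|_2^2)\operatorname{dist}_{\mathcal{X}^*}(x^k)^2$ (this needs $A$ full column rank along $x^k - x^*$, which the hypothesis that $0$ is the unique root of $Ax = B|x|$ and Theorem \ref{THM-suf}-type reasoning supplies); averaging these two lower bounds with the right weights produces the stated contraction factor.

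For part (i), where $\sigma_n(H^{1/2}A) = \|H^{1/2}B\|_2$ and $\alpha \in (0,1)$, the coefficient of $\|x^k-x^*\|_2^2$ is only $\le 1$, so I would instead read the master inequality as a supermartingale-type relation: $\mathbb{E}[\|x^{k+1}-x^*\|_2^2 \mid x^k] \le \|x^k - x^*\|_2^2 - (2\alpha - \alpha^2 - \text{(small)})\|r^k\|_H^2$ where the residual cross term is absorbed because $2\alpha - \alpha^2 > 0$ dominates. Then apply the Robbins–Siegmund / supermartingale convergence theorem to conclude $\{\|x^k-x^*\|_2^2\}$ converges a.s. and $\sum_k \|r^k\|_H^2 < \infty$ a.s., hence $r^k = Ax^k - B|x^k| - b \to 0$ a.s.; boundedness of $\{x^k\}$ then extracts an a.s.-convergent subsequence whose limit lies in $\mathcal{X}^*$ by continuity of $F_{A,B}$.

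The main obstacle I anticipate is handling the cross term $-2\alpha\langle B(|x^k|-|x^*|), Hr^k\rangle$ cleanly: it has indefinite sign, and the naive Cauchy–Schwarz bound couples $\|x^k-x^*\|_2$ with $\|r^k\|_H$ rather than with $\|r^k\|_H^2$, so one must be careful to split $\|r^k\|_H^2$ appropriately (e.g. via a Young-type inequality $ab \le \tfrac{\epsilon}{2}a^2 + \tfrac{1}{2\epsilon}b^2$ tuned to the gap $\sigma_n(H^{1/2}A) - \|H^{1/2}B\|_2$) so that the leftover is still a positive multiple of $\|x^k-x^*\|_2^2$. Getting the bookkeeping to reproduce exactly the stated $\xi$-dependent constants, and verifying the stated ranges of $\xi$ indeed force $\alpha \in (0,1]$, is the delicate part; everything else is routine expansion and the two invocations of earlier results (Lemma \ref{positive} for well-definedness and positive definiteness of $H$, and Theorem \ref{EB-general} for the error bound in (iii)).
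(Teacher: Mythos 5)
Your skeleton (expand the square, bound the quadratic term by $\|A^\top S_k v\|_2\le\|S_k^\top A\|_2\|v\|_2$, take conditional expectation to produce $H$, then supermartingale lemma for (i) and the error bounds of Corollary \ref{EB-2-norm} / Theorem \ref{EB-general} for (ii)/(iii)) matches the paper, and your master inequality is algebraically the same as the paper's intermediate step. The genuine gap is exactly at the point you flagged: the treatment of the cross term $-2\alpha\langle B(|x^k|-|x^*|),Hr^k\rangle$. The paper does \emph{not} use Cauchy--Schwarz there. It uses the exact identity $-2\langle a,b\rangle=\|a-b\|_2^2-\|a\|_2^2-\|b\|_2^2$ with $a=S_k^\top r^k$, $b=S_k^\top A(x^k-x^*)$ and $a-b=-S_k^\top B(|x^k|-|x^*|)$ (equivalently, expand $\|A(x^k-x^*)\|_H^2$ using $A(x^k-x^*)=r^k+B(|x^k|-|x^*|)$), which gives, after expectation,
\begin{equation*}
\mathbb{E}\bigl[\|x^{k+1}-x^*\|_2^2\mid x^k\bigr]\le\|x^k-x^*\|_2^2-(\alpha-\alpha^2)\|r^k\|_H^2+\alpha\|B(|x^k|-|x^*|)\|_H^2-\alpha\|A(x^k-x^*)\|_H^2 ,
\end{equation*}
and then bounds the last two terms by $\|H^{\frac12}B\|_2^2\|x^k-x^*\|_2^2$ and $\sigma_n^2(H^{\frac12}A)\|x^k-x^*\|_2^2$. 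This retains the crucial negative term $-\alpha\sigma_n^2(H^{\frac12}A)\|x^k-x^*\|_2^2$, yielding the factor $1-\alpha\bigl(\sigma_n^2(H^{\frac12}A)-\|H^{\frac12}B\|_2^2\bigr)$ plus the leftover $-(\alpha-\alpha^2)\|r^k\|_H^2$, from which (i)--(iii) follow with the stated constants.

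Your Cauchy--Schwarz/Young route discards precisely that $\|A(x^k-x^*)\|_H^2$ contribution, and it cannot be recovered by tuning the Young parameter. Concretely, with $t:=\|r^k\|_H/\|x^k-x^*\|_2$ and $\beta:=\|H^{\frac12}B\|_2$, your chain bounds the per-step factor by $1+2\alpha\beta t-\alpha(2-\alpha)t^2$, which exceeds $1$ whenever $t<\tfrac{2\beta}{2-\alpha}$. The only lower bound on $t$ available is $t\ge\sigma_n(H^{\frac12}A)-\beta$ (and none at all in case (i), where $\sigma_n(H^{\frac12}A)=\beta$), so the argument gives no supermartingale inequality for (i), no contraction for (ii) whenever $\sigma_n(H^{\frac12}A)-\beta<\tfrac{2\beta}{2-\alpha}$ (e.g.\ for all $\beta<\sigma_n(H^{\frac12}A)<2\beta$, which the hypothesis of (ii) allows), and in no case reproduces the exact rates $1-(2-\xi)\xi\sigma_n^2(H^{\frac12}A)$ or the constant in (iii). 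A secondary point: in (iii) the paper needs no ``full column rank along $x^k-x^*$'' step; the quantity $\sigma_n^2(H^{\frac12}A)-\|H^{\frac12}B\|_2^2$ is allowed to be negative there and is compensated solely by the error-bound term $\lambda_{\min}(H)\kappa^2\operatorname{dist}^2_{\mathcal{X}^*}(x^k)\le\|r^k\|_H^2$ applied to the $-(\alpha-\alpha^2)\|r^k\|_H^2$ leftover. Replacing your Cauchy--Schwarz step by the exact expansion repairs the proof and essentially reduces it to the paper's argument.
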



\begin{remark}
	If $B=0$, Theorem \ref{THM-asym} $(\romannumeral1)$ aligns with the almost sure convergence result obtained through the properties of stochastic Quasi-Fej{\'e}r sequences (see Remark 3.2 in \cite{briceno2022random}). Theorem \ref{THM-asym} $(\romannumeral2)$ and $(\romannumeral3)$ can readily derive the almost sure convergence result. In fact, the proof of Theorem \ref{THM-asym} $(\romannumeral2)$ reveals that
	$
	\mathbb{E}[\|x^{k+1}-x^*\|^2_2\mid x^k]\leq \|x^{k}-x^*\|^2_2-(2-\xi)\xi\sigma_{n}^2(H^{\frac{1}{2}}A)\cdot\|x^{k}-x^*\|^2_2
	$ (see \eqref{proof-xie-2025-2-3-1}). Consequently, by applying the supermartingale convergence lemma (see Lemma \ref{supermartingale}), we establish that $x^{k}\to x^*$ a.s. Similarly, we can obtain that $\operatorname{dist}_{\mathcal{X^*}}(x^{k})\to 0$ a.s.
	Furthermore,  Theorem \ref{THM-asym} $(\romannumeral2)$ and $(\romannumeral3)$ indicate Algorithm \ref{RIM} exhibits the variance reduction property. In fact, supposing $\mathbb{E}[x]$ is bounded for all $ x \in \mathbb{R}^n $, by definition, we have
	$$
	\mathbb{E} [ \|x-x^* \|^2_2] =\left\|\mathbb{E} \left[ x-x^{*} \right] \right\|_2^{2}
	+\mathbb{E} \left[ \|x-\mathbb{E}[x]\|_2^{2}\right],
	$$
	which implies that the convergence of $ \mathbb{E} [ \|x-x^*\|^2_2] $ leads to that of $\mathbb{E} \left[ \|x-\mathbb{E}[x]\|_2^{2}\right]$, namely, the reduction of variance. 
\end{remark}

\section{Special cases: Examples} 
\label{sect-examples}

This section provides a brief discussion on the choice of the probability space $(\Omega,\mathcal{F},\mathbf{P})$  in our method for recovering existing methods and developing new ones.
While this list is not exhaustive, it serves to illustrate the flexibility of our algorithm. 

\subsection{The generalized Picard iteration method}
\label{sect:5-1}

When the sampling space $\Omega=\{(A^\dagger)^\top\}$, i.e. $S=(A^\dagger)^\top$ with probability one, and using the fact that $A^\top (A^\dagger)^\top A^\dagger =A^\dagger$, the iteration scheme \eqref{one-step} of our method becomes
\begin{equation}
	\label{gpim}
	x^{k+1}=x^k-\alpha A^\dagger(Ax^k-B|x^k|-b).
\end{equation}
In particular, when $\alpha=1$, $m=n$ and $A$ is nonsingular, the iteration scheme \eqref{gpim} recovers the Picard iteration method \cite{rohn2014iterative}. Hence, we refer to \eqref{gpim} as the generalized Picard iteration method. 

In this case, the parameters in Theorem \ref{THM-asym} can be simplified as 
$H=(A^\dagger)^\top A^\dagger$, $\sigma_n(H^\frac{1}{2}A)=1$, and $\|H^\frac{1}{2}B\|_2=\|A^\dagger B\|_2$. If $\|A^\dagger B\|_2<1$, then according to Theorem \ref{THM-asym} 	$(\romannumeral2)$, the iteration scheme \eqref{gpim} with $\alpha=1$ has the following convergence property
$$
\|x^k-x^*\|_2\leq (\|A^\dagger B\|_2)^k\|x^0-x^*\|_2.
$$
The above convergence result recovers the result in \cite[Proposition 22]{zamani2023error} when $A$ is nonsingular.
We should note that the other convergence results in Theorem \ref{THM-asym} still hold, although we have only illustrated the result mentioned above.

\subsection{The  gradient descent method}
When the sampling space $\Omega=\{I_m\}$, i.e. $S=I_m$ with probability one, then  the iteration scheme \eqref{one-step} of our method becomes
\begin{equation}
	\label{ggdm}
	x^{k+1}=x^k-\alpha \frac{A^\top(Ax^k-B|x^k|-b)}{\|A\|^2_2}.
\end{equation}
In particular, when $B=0$, the iteration scheme \eqref{ggdm}  corresponds to the gradient descent method used to solve the least-squares problem. 

In this case, the parameters in Theorem \ref{THM-asym} can be simplified as 
$H=I/\|A\|^2_2$, $\sigma_n(H^\frac{1}{2}A)=\sigma_n(A)/\|A\|_2$, and $\|H^\frac{1}{2}B\|_2=\|B\|_2/\|A\|_2$. If $\sigma_n(A)>\|B\|_2$, then according to Theorem \ref{THM-asym} 	$(\romannumeral2)$, the iteration scheme \eqref{gpim} with $\alpha=1$ exhibits the following convergence property
$$
\|x^k-x^*\|_2^2\leq \left(1-\frac{\sigma^2_n(A)-\|B\|^2_2}{\|A\|^2_2}\right)^k\|x^0-x^*\|_2^2.
$$ 
When $B=0$, the above convergence property aligns with the convergence property of gradient descent for solving the least-squares problem, see e.g. \cite[Theorem 3.10]{bubeck2015convex}.

\subsection{Randomized Kaczmarz method for GAVE}\label{sect-5-3}
When the sampling space $ \Omega=\left\{e_i\right\}_{i=1}^m$ with $e_{i_k} $ being sampled with probability $ \operatorname{Prob}(i_k=i)= \frac{\|A_{i,:}\|^2_2}{\|A\|_F^2}$ at the $ k $-th iteration,  then  the iteration scheme \eqref{one-step} of our method becomes
\begin{equation}
	\label{rk-gave}
	x^{k+1}=x^k-\alpha \frac{A_{i_k,:}x^k-B_{i_k,:}|x^k|-b_{i_k}}{\|A_{i_k,:}\|^2_2}A_{i_k,:}^\top.
\end{equation}
Particularly, when $B=0$, the iteration scheme \eqref{rk-gave}  corresponds to the randomized Kaczmarz method for solving linear systems. 

In this case, the parameters in Theorem \ref{THM-asym} can be simplified as 
$H=I/\|A\|^2_F$, $\sigma_n(H^\frac{1}{2}A)=\sigma_n(A)/\|A\|_F$, and $\|H^\frac{1}{2}B\|_2=\|B\|_2/\|A\|_F$. If $\sigma_n(A)>\|B\|_2$, then according to Theorem \ref{THM-asym} 	$(\romannumeral2)$, the iteration scheme \eqref{gpim} with $\alpha=1$ exhibits the following convergence property
$$
\mathbb{E}[\|x^k-x^*\|_2^2]\leq \left(1-\frac{\sigma^2_n(A)-\|B\|^2_2}{\|A\|^2_F}\right)^k\|x^0-x^*\|_2^2.
$$ 
When $B=0$, the above convergence property coincides with the convergence property of the RK method.

\subsection{Randomized  block Kaczmarz method for GAVE}

When the sampling space $ \Omega=\left\{I_{:,\mathcal{J}}((A_{\mathcal{J},:})^\dagger)^\top\right\}_{\mathcal{J}\subset[m]}$, then  the iteration scheme \eqref{one-step} of our method becomes
\begin{equation}
	\label{rbk-gave}
	x^{k+1}=x^k-\alpha (A_{\mathcal{J}_k,:})^\dagger(A_{\mathcal{J}_k,:}x^k-B_{\mathcal{J}_k,:}|x^k|-b_{\mathcal{J}_k}).
\end{equation}
Particularly, when $B=0$, the iteration scheme \eqref{rbk-gave}  corresponds to the randomized block Kaczmarz (RBK) method for solving linear systems \cite{needell2014paved}. 
We briefly review the strategies for selecting the subset $\mathcal{J}_k$ for the RBK method \cite{needell2014paved,ferreira2024survey}. We define the number of blocks denoted by $t$ and  divide the rows of the matrix into $t$ subsets, creating a partition $\Gamma=\{\mathcal{J}_1,\ldots,\mathcal{J}_t\}$. 
Then the block  $\mathcal{J}_k$ can be chosen from the partition using one of two strategies: it can be chosen randomly from the partition independently of all previous choices, or it can be sampled without replacement, which Needell and Tropp  \cite{needell2014paved} found to be more effective. 
If $t= m$, that is, if the number of blocks is equal to the number of rows, each block consists of a single row and we recover the RK method.
The calculation of the pseudoinverse $(A_{\mathcal{J}_k,:})^\dagger$ in each iteration is computationally expensive. However, if the submatrix $(A_{\mathcal{J}_k,:})^\dagger$ is well-conditioned, we can use efficient algorithms like conjugate gradient for least-squares (CGLS) to calculate it.

To analyze the convergence of the RBK method, it is necessary to define specific quantities \cite{needell2014paved}. The row paving
$(t,\beta_1, \beta_2)$ of a matrix $A$ is a partition $\Gamma=\{\mathcal{J}_1,\ldots,\mathcal{J}_t\}$ that  satisfies
$$
\beta_1\leq \lambda_{\min}(A_{\mathcal{J},:}A^\top_{\mathcal{J},:}) \ \ \text{and} \ \ \lambda_{\max}(A_{\mathcal{J},:}A^\top_{\mathcal{J},:})\leq \beta_2, \ \  \forall \ \mathcal{J}\in\Gamma, 
$$
where $t$ indicates the size of the paving,  and  $\beta_1$ and $\beta_2$ denote the lower and upper paving bounds, respectively. 
Assume that $A$ is a matrix with full column rank and row paving $(t,\beta_1, \beta_2)$, and the index $i$ is selected with probability $ 1/t$.
Let us now consider the parameters  in Theorem \ref{THM-asym}. The parameter $H=\frac{1}{t}\sum_{i=1}^{t} I_{:,\mathcal{J}_i}((A_{\mathcal{J}_i,:})^\dagger)^\top (A_{\mathcal{J}_i,:})^\dagger(I_{:,\mathcal{J}_i})^\top$ 
satisfies $\frac{1}{t\beta_2 } I\preceq H\preceq \frac{1}{t\beta_1 }I$,  and consequently, $\sigma_n(H^\frac{1}{2}A)\geq\sigma_n(A)/\sqrt{t\beta_2}$, and $\|H^\frac{1}{2}B\|_2\leq\|B\|_2/\sqrt{t\beta_1}$.
If $\sigma_n(A)>\sqrt{\frac{\beta_2}{\beta_1}}\|B\|_2$, then according to Theorem \ref{THM-asym} 	$(\romannumeral2)$, the iteration scheme \eqref{rbk-gave} with $\alpha=1$ exhibits the following convergence property
$$
\mathbb{E}[\|x^k-x^*\|_2^2]\leq \left(1-\left(\frac{\sigma_n^2(A)}{t\beta_2}-\frac{\|B\|^2_2}{t\beta_1}\right)\right)^k\|x^0-x^*\|_2^2.
$$ 
When $B=0$, this convergence property aligns with that of the RBK method for solving consistent linear systems with full column rank coefficient matrices, as detailed in \cite[Theorem 1.2]{needell2014paved}.

In practice, the RBK method may encounter several challenges, including the computational expense at each iteration due to the necessity of applying the pseudoinverse to a vector, which is equivalent to solving a least-squares problem. Additionally, the method presents difficulties in  parallelization. To overcome these obstacles, the
randomized average block Kaczmarz (RABK) method was introduced \cite{necoara2019faster,moorman2021randomized,du2020randomized,zeng2023adaptive}.

\subsection{Randomized average block Kaczmarz method for GAVE}
\label{sub-section-rabk}

The randomized average block Kaczmarz (RABK) method is a block-parallel approach that computes multiple updates at each iteration \cite{necoara2019faster,moorman2021randomized,du2020randomized,zeng2023adaptive}.
Specifically,  consider the following partition of $[m]$
$$	
\begin{aligned}
	\mathcal{I}_i&=\left\{\varpi(j): j=(i-1)p+1,(i-1)p+2,\ldots,ip\right\}, i=1, 2, \ldots, t-1,
	\\
	\mathcal{I}_t&=\left\{\varpi(j): j=(t-1)p+1,(t-1)p+2,\ldots,m\right\}, \operatorname{card}(\mathcal{I}_t)\leq p,
\end{aligned}$$
where $\varpi$ is a uniform random permutation on $[m]$ and $p$ is the block size. 
We define $\|A\|_{\varpi,p}:=\sqrt{\sum_{i=1}^{t}\|A_{\mathcal{I}_i,:}\|_2^2}$ and
select an index $ i_k \in [t] $ with the probability  $\operatorname{Prob}(i_k=i)=\|A_{\mathcal{I}_i,:}\|^2_2/\|A\|^2_{\varpi,p}$, and then set $S_k=I_{:,\mathcal{I}_{i_k}}$. 
The iteration scheme \eqref{one-step} of our method becomes
\begin{equation}
	\label{rabk-gave}
	x^{k+1}=x^k-\alpha \frac{A_{\mathcal{I}_{i_k},:}^\top\left(A_{\mathcal{I}_{i_k},:}x^k-B_{\mathcal{I}_{i_k},:}|x^k|-b_{\mathcal{I}_{i_k}}\right)}{\|A_{\mathcal{I}_{i_k},:}\|^2_2}.
\end{equation}
Particularly, when $B=0$, the iteration scheme \eqref{rabk-gave}  corresponds to the randomized average block Kaczmarz (RABK) method for solving linear systems \cite{necoara2019faster,du2020randomized,moorman2021randomized,zeng2023adaptive}. 

In this case, the parameters in Theorem \ref{THM-asym} can be simplified as 
$H=1/\|A\|^2_{\varpi,p}$, $\sigma_n(H^\frac{1}{2}A)=\sigma_n(A)/\|A\|_{\varpi,p}$, and $\|H^\frac{1}{2}B\|_2=\|B\|_2/\|A\|_{\varpi,p}$. If $\sigma_n(A)>\|B\|_2$, then according to Theorem \ref{THM-asym} 	$(\romannumeral2)$, the iteration scheme \eqref{gpim} with $\alpha=1$ exhibits the following convergence property
\begin{equation}
	\label{RABK-convergence}
	\mathbb{E}[\|x^k-x^*\|_2^2]\leq \left(1-\frac{\sigma^2_n(A)-\|B\|^2_2}{\|A\|^2_{\varpi,p}}\right)^k\|x^0-x^*\|_2^2.
\end{equation}
If $p=1$, meaning that each block only contains a single row, then the iterative scheme \eqref{rabk-gave} recovers the RK method \eqref{rk-gave} for GAVE, and the convergence result stated above aligns with the result established in Section \ref{sect-5-3}.

We now make a comparison between the cases where $p=1$ and $p=m$. For convenience, we assume that $\sigma^2_n(A)-\|B\|^2_2=1$.  The convergence factors for the cases $p=1$  and $p=m$ are  $1-\frac{1}{\|A\|^2_F}$ and $1-\frac{1}{\|A\|^2_2}$, respectively. Since the computational cost for the case $p=m$ at each step is about $m$-times as expensive as that for the case $p=1$, 
we can turn this comparison into a comparison between $\left(1-\frac{1}{\|A\|^2_F}\right)^m$ and $ 1-\frac{1}{\|A\|^2_2}$.
Since $\|A\|^2_F\leq m\|A\|_2^2$, it follows that 
$$\left(1-\frac{1}{\|A\|^2_F}\right)^m\leq \left(1-\frac{1}{m\|A\|^2_2}\right)^m< \operatorname{exp}\left(-\frac{1}{\|A\|^2_2}\right)<1-\frac{1}{\|A\|^2_2}.$$
This suggests that, theoretically, the RABK method with $p=1$ is more efficient than the RABK method with $p=m$ for solving the GAVE \eqref{GAVE}.
However, we note that one can use the parallelization technique to speed-up  the iteration scheme \eqref{rabk-gave} in terms of the total running time.


\subsection{Other types of sketches}
\label{sect-sketch}

In addition to the previously discussed sketching matrices, we now introduce several other types of sketching matrices that can be employed in Algorithm \ref{RIM}. These alternative techniques are widely used in randomized numerical linear algebra and optimization, each offering unique advantages that make them particularly suitable for specific problems. For a more comprehensive discussion of additional sketching methods, we refer the reader to e.g., \cite[Sections 8 and 9]{martinsson2020randomized}.

{\bf Uniform sampling and CountSketch:} 
We consider the uniform sampling of \(p\) unique indices that form the set \(\mathcal{J}\), where \(\mathcal{J}\subset[m]\) and \(\operatorname{card}(\mathcal{J})=p\) for all samplings, with \(p\) representing the block size. It is evident that the total number of possible choices for \(\mathcal{J}\) is given by \(\binom{m}{p}\), and the probability of selecting any particular \(\mathcal{J}\) is \(\operatorname{Prob}(\mathcal{J})=1/\binom{m}{p}\).  The uniform sketching matrix is defined as \(S_{US}=I_{:,\mathcal{J}}\). 

The CountSketch, which originated from the streaming data literature \cite{charikar2004finding,cormode2005improved} and was popularized as a matrix sketching tool by \cite{clarkson2017low}, is defined as  
$
S_{CS}^\top=DI_{\mathcal{J},:},
$
where \(D\in\mathbb{R}^{p\times p}\) is a diagonal matrix with elements sampled uniformly from \(\{-1,1\}\) and  the set \(\mathcal{J}\) is  chosen by uniformly sampling \(p\) elements from $[m]$. Since $S_{US}S_{US}^\top=S_{CS}S_{CS}^\top$ and $\|S_{US}^\top A\|_2=\|S_{CS}^\top A\|_2$, it follows that the iteration scheme \eqref{one-step} in Algorithm \ref{RIM} for both uniform sampling and CountSketch are identical. In these cases, we refer to Algorithm \ref{RIM} as the \emph{randomized iterative method} with \emph{CountSketch} (RIMCS).


{\bf Gaussian sketch:} A Gaussian sketch is a random matrix \(S\in\mathbb{R}^{m\times p}\), where each element is independently and identically distributed (i.i.d.) according to the standard Gaussian distribution. Forming the sketch \(S^\top A\) for a dense matrix \(A\) typically requires \(O(mnp)\) flops, which can be computationally expensive. However, in practice, the actual running time can be significantly reduced through parallelized computation or when dealing with sparse matrices \(A\). For a detailed discussion of the practical advantages of Gaussian sketch, one may refer to \cite{meng2014lsrn}.  In this case, Algorithm \ref{RIM} yields the \emph{randomized iterative method} with \emph{Gaussian sketch} (RIMGS).


{\bf Subsampled randomized Hadamard transform (SRHT):}  The SRHT \cite{woolfe2008fast,tropp2011improved} is defined as a matrix \(S\in\mathbb{R}^{m\times p}\) such that
$
S^\top=\sqrt{\frac{m}{p}}I_{\mathcal{J},:}H_m D,
$
where \(D\in\mathbb{R}^{m\times m}\) is a diagonal matrix with elements sampled uniformly from \(\{-1,1\}\), \(H_m\in\mathbb{R}^{m\times m}\) is the Hadamard matrix\footnote{The Hadamard transform is defined for \(m=2^q\) for some positive integer \(q\). If \(m\) is not a power of $2$, a standard practice is to pad the data matrix with \(2^{\lceil\log_2(m)\rceil}-m\) additional rows of zeros.} of order \(m\), and the set \(\mathcal{J}\) is  chosen by uniformly sampling \(p\) elements from $[m]$. We note that the SRHT can be computed using $O(mn\log p)$ flops via  the fast Walsh--Hadamard transform \cite{fino1976unified}. In this context, Algorithm \ref{RIM} yields the \emph{randomized iterative method} with \emph{SRHT} (RIMSRHT).


\section{Numerical experiments}
\label{sect:6}
In this section, we study the computational behavior of the proposed randomized iterative method. We also compare our algorithms with some of the state-of-the-art methods, namely, the generalized Newton method (GNM) \cite{mangasarian2009generalized,hu2011generalized},  the Picard iteration method (PIM) \cite{rohn2014iterative}, the successive linearization algorithm (SLA) \cite{mangasarian2007absolute}, and the method of alternating projections (MAP) \cite{alcantara2023method}.

All the methods are implemented in  MATLAB R2022a for Windows $11$ on a desktop PC with Intel(R) Core(TM) i7-1360P CPU @ 2.20GHz  and 32 GB memory. The code to reproduce our results can be found at \href{https://github.com/xiejx-math/GAVE-codes}{https://github.com/xiejx-math/GAVE-codes}.

We have previously detailed the PIM  in Section \ref{sect:5-1}. Next, we briefly describe GNM, SLA, and MAP, respectively.

\begin{itemize}
	\item[(\romannumeral1)] {\bf Generalized Newton method (GNM)} \cite{mangasarian2009generalized,hu2011generalized}.
	This algorithm is aimed at solving the GAVE \eqref{GAVE} with $m=n$ and the iterations are given by
	$$
	x^{k+1}=(A-BD^k)^{-1}b,
	$$
	where $D^k=\operatorname{diag}(\operatorname{sign}(x^k_1),\ldots,\operatorname{sign}(x^k_n))$. The iterations are derived by applying the semismooth Newton method in solving the equation $Ax-B|x|=b$.
	As in \cite{mangasarian2009generalized}, we use the MATLAB's backslash operator ``\textbackslash '' to obtain the iterates. 
	
	\item[(\romannumeral2)]  {\bf Successive linearization algorithm (SLA)} \cite{mangasarian2007absolute}.
	Given an initial point $(x^0,t^0,s^0)\in\mathbb{R}^n\times\mathbb{R}^n\times \mathbb{R}^m$ and $\varepsilon>0$, we solve the linear programming 
	\begin{equation}
		\label{sla}
		\begin{aligned}
			\min_{(x,t,s)\in\mathbb{R}^n\times\mathbb{R}^n\times \mathbb{R}^m} \ \ &\varepsilon\sum_{i=1}^{n} (-\operatorname{sign}(x_i^k)x_i+t_i)+\sum_{i=1}^{m}s_i\\
			\text{s.t.} \ \ & -s\leq Ax-Bt-b\leq s,\\
			&  -t\leq x\leq t,
		\end{aligned}
	\end{equation}
	and call its solution $(x^{k+1},t^{k+1},s^{k+1})$. To solve \eqref{sla},
	we use the MATLAB function {\tt linprog}. 
	
	\item[(\romannumeral3)] {\bf Method of alternating projections (MAP)} \cite{alcantara2023method}.
	Let $w:=(u,v)\in\mathbb{R}^n\times\mathbb{R}^n$ and consider the following feasibility problem
	\begin{equation}
		\label{feaprob}
		\mbox{Find} \ w  \in C_1\mathop{\cap} C_2,
	\end{equation}
	where
	$$
	C_1:=\{w:Tw=\sqrt{2}b\}, 
	C_2:=\{w: u\geq 0,v\geq0, \ \text{and} \ \langle u,v \rangle=0\},
	$$ 
	and the matrix $T$ is defined as
	$$
	T:=[A-B \ \ -A-B]\in\mathbb{R}^{m\times 2n}.
	$$
	In \cite{alcantara2023method}, the authors showed that the GAVE \eqref{GAVE} is equivalent to the feasibility problem \eqref{feaprob}, i.e., if $(u^*,v^*)$ solves \eqref{feaprob}, then $x^*=\frac{1}{\sqrt{2}}(u^*-v^*)$ solves the GAVE \eqref{GAVE}. The method of alternating projections (MAP) can be employed to solve \eqref{feaprob}. Particularly,
	starting from any initial point $w^0$, the MAP iterates with the format
	$$w^{k+1}\in (P_{C_1} P_{C_2})(w^k).$$
	Here for any $w\in\mathbb{R}^{2n}$, we have
	$$
	P_{C_1}(w)=w-T^\dagger(Tw-\sqrt{2}b)
	$$
	and $z\in P_{C_2}(w)$ if and only if for each $i=1,\ldots,n$,
	$$
	(z_i,z_{n+i}) \in\left\{
	\begin{array}{ll}
		\{(0,(v_i)_+)\}, & u_i<v_i,
		\\
		\{((u_i)_+,0)\}, & u_i>v_i,
		\\
		\{(0,(v_i)_+),((u_i)_+,0)\}, & u_i=v_i.
	\end{array}
	\right.
	$$
\end{itemize}

In our implementations, all computations are initialized with $x^0=0$.  The computations are terminated once the relative solution error (RSE), defined as
$\text{RSE}=\|x^k-x^*\|^2_2/\|x^*\|^2_2$, or
the relative residual error (RRE), defined as
$\text{RRE}=\|Ax^k-B|x^k|-b\|^2_2/\|b\|^2_2$,  is less than a specific error tolerance.
For our algorithms, we set $\alpha=1$ and for the SLA, we set $\varepsilon=1$. All the results below are averaged over $20$ trials.

\subsection{Comparison of different types of sketches}
\label{sect-comp-sketh}
In this subsection, we compare the performance of different sketching methods presented in Section \ref{sect-examples}. In particular, we focus mainly on the evaluation of the performance of  RABK, RIMCS, RIMGS, and RIMSRHT  detailed in Subsections \ref{sub-section-rabk} and \ref{sect-sketch}.

We generate the coefficient matrices \(A\) and \(B\) as follows. 
Given parameters \(m, n, a_{\min}, b_{\max}\), and  \(\kappa_A, \kappa_B\geq1\), let \(r = \min\{m, n\}\). We construct matrices \(A = U_1 D V_1^\top\) and \(B = U_2 P V_2^\top\), where \(U_1, U_2 \in \mathbb{R}^{m \times r}\) and \(V_1, V_2 \in \mathbb{R}^{n \times r}\) are column-orthogonal matrices. The diagonal matrices \(D\) and \(P\) have entries defined as:
\[
D_{i,i} = a_{\min} + \frac{i-1}{r-1} (\kappa_A - 1) a_{\min}, \quad
P_{i,i} = \frac{b_{\max}}{\kappa_B} + \frac{i-1}{r-1} \left(1 - \frac{1}{\kappa_B}\right) b_{\max}.
\]
This construction ensures that \(\sigma_r(A) = a_{\min}\), \(\|B\|_2 = b_{\max}\), and the condition numbers of matrices \(A\) and \(B\) are \(\kappa_A\) and \(\kappa_B\), respectively. Moreover, if \(m \geq n\) and \(a_{\min} > b_{\max}\), we can ensure the uniqueness of the solution to the GAVE (see Theorem \ref{THM-suf}). Using MATLAB notation, we generate the column-orthogonal matrices with the following commands: {\tt [U,$\sim$]=qr(randn(m,r),0)} and {\tt [V,$\sim$]=qr(randn(n,r),0)}. The exact solution is generated by \(x^* = {\tt randn(n,1)}\), and we then set \(b = Ax^* - B|x^*|\). In our test, we set \(a_{\min} = 2\) and \(b_{\max} = 1\).

Figure \ref{figureR1} presents the computational time (CPU) and the number of iterations for each method.  The bold line illustrates the median value derived from $20$ trials. The lightly shaded area signifies the  range from the minimum to the maximum values, while the darker shaded one indicates the data lying between the $25$-th and $75$-th quantiles. It can be seen from Figure \ref{figureR1} that  all four algorithms require the same number of iterations, which increases as the condition number of matrix \(A\) rises.
Meanwhile, the condition number of matrix $B$ has only a marginal influence on the iteration count. This suggests that the algorithms are more sensitive to the conditioning of matrix $A$ than to that of matrix $B$. These findings align with the convergence bound \eqref{RABK-convergence}, which is primarily dependent on the condition number of $A$ and only weakly influenced by the condition number of $B$.


From Figure \ref{figureR1}, it can also be observed that RIMGS and RIMSRHT demonstrate more CPU time compared to the other algorithms due to their higher computational cost per iteration. Although RABK and RIMCS have the same computational cost per iteration, RABK outperforms RIMCS in terms of the CPU time. This is attributed to RIMCS encountering additional computational overhead from repeatedly extracting rows from matrices $A$ and $B$.
In contrast, RABK avoids this issue by pre-storing the submatrices of $A$ and $B$ based on the initial partition, thereby eliminating the need for repeated row extractions.
In fact, it has been observed that the RABK method consistently outperforms the other three methods regarding CPU time throughout our experiments. Consequently, we will focus our subsequent tests on the RABK method.

\begin{figure}[tbhp]
	\centering
	\begin{tabular}{cc}
		\includegraphics[width=0.31\linewidth]{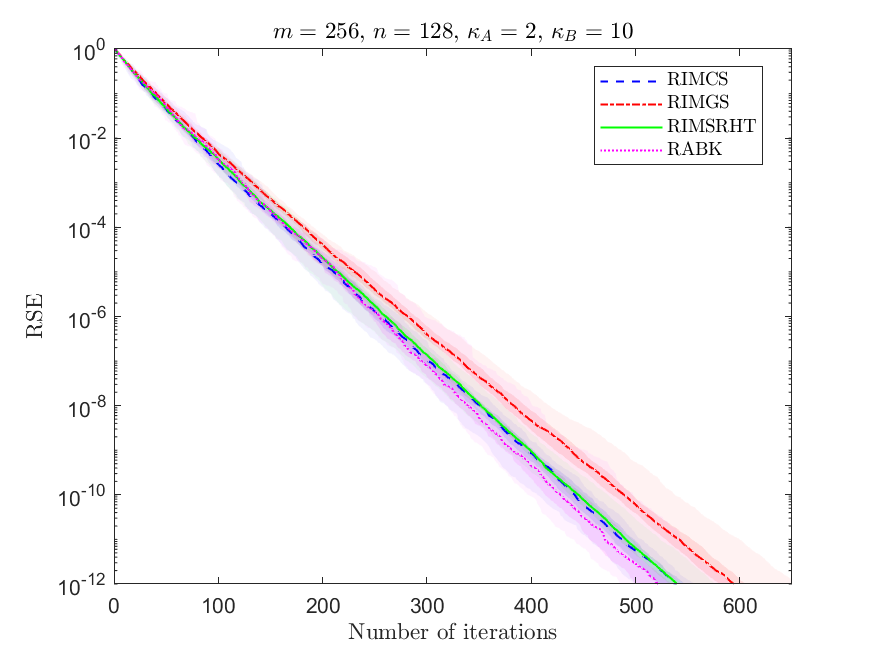}
		\includegraphics[width=0.31\linewidth]{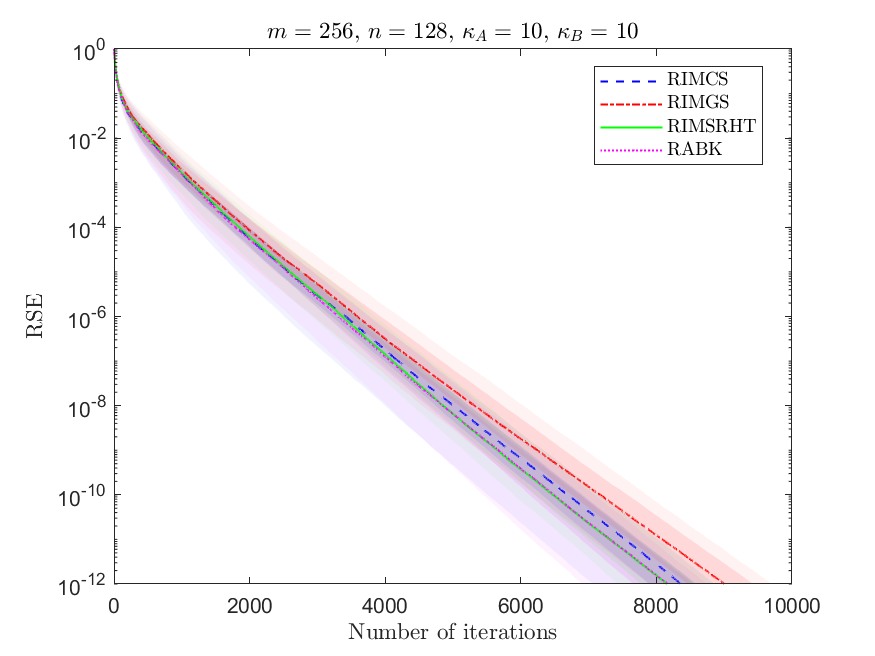}
		\includegraphics[width=0.31\linewidth]{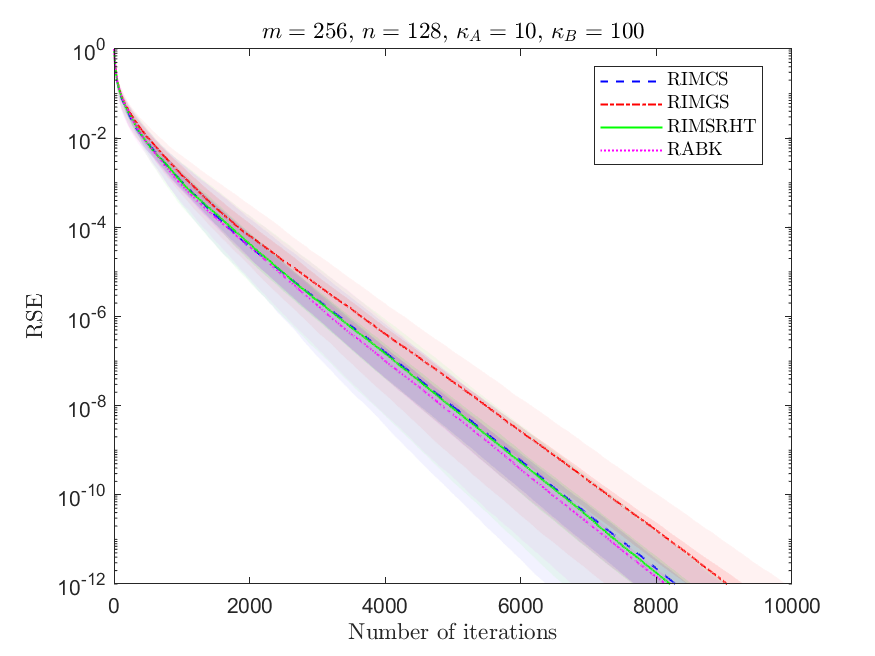}\\
		\includegraphics[width=0.31\linewidth]{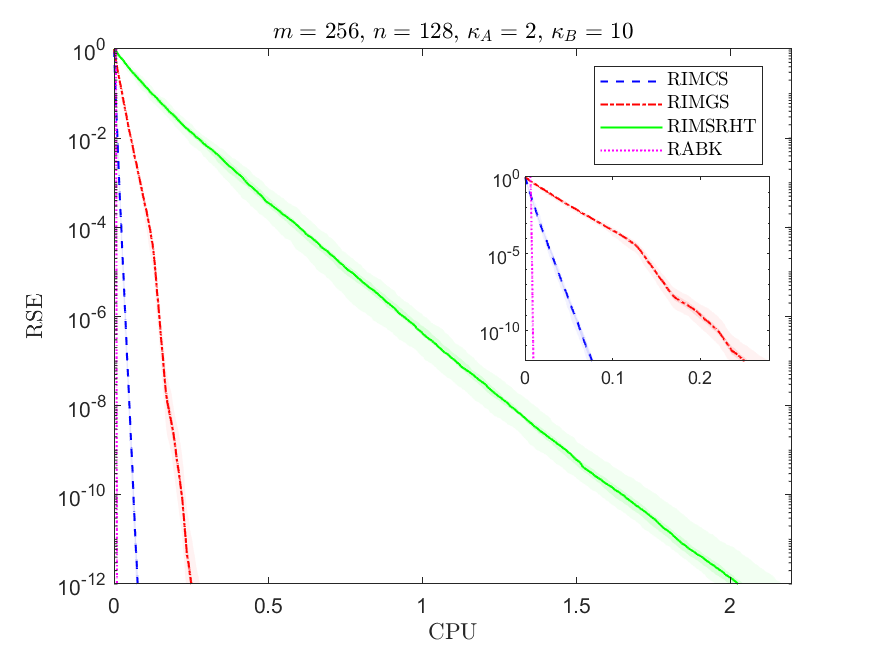}
		\includegraphics[width=0.31\linewidth]{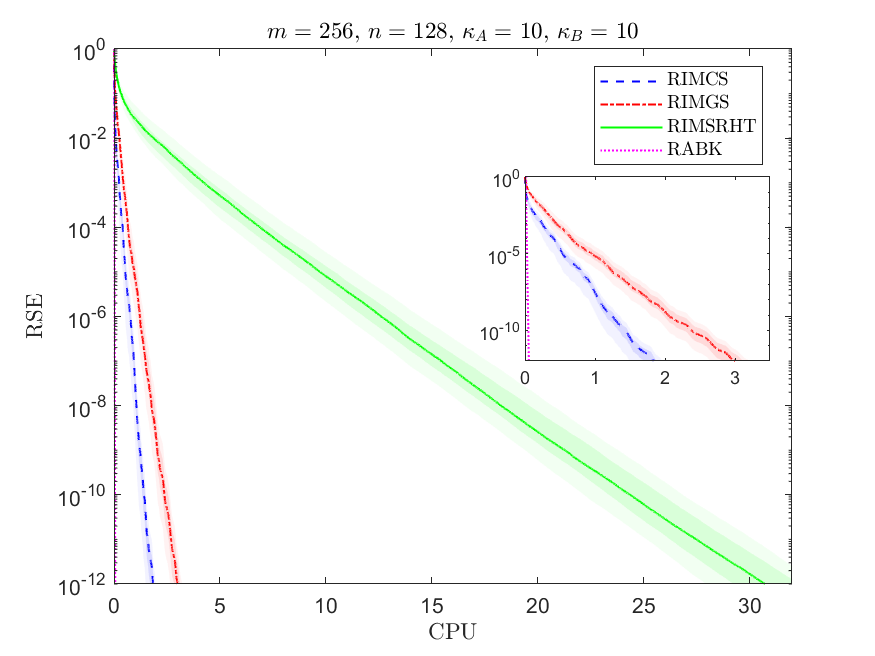}
		\includegraphics[width=0.31\linewidth]{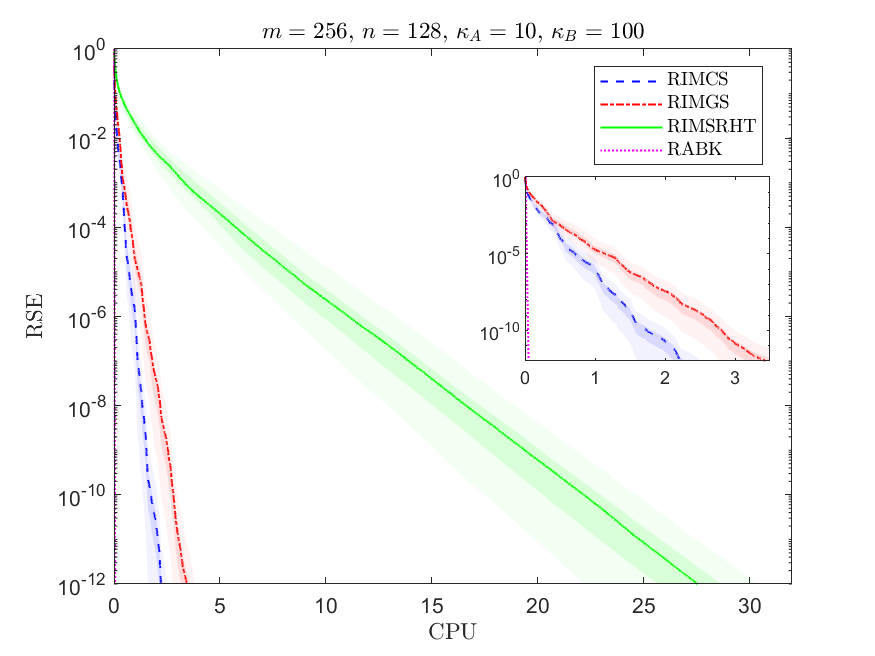}
	\end{tabular}
	\caption{Comparison of  RIMCS, RIMGS,  RIMSRHT, and RABK. We set $p=10$. Figures depict the evolution of RSE with respect to the number of  iterations (top) and the CPU time (bottom). The title of each plot indicates the values of $m,n,\kappa_A$, and $\kappa_B$.}
	\label{figureR1}
\end{figure}

Finally, we explore the influence of the block size $p$ on the convergence of the RABK method in solving the GAVE \eqref{GAVE}  with randomly generated matrices. The performance of the algorithms is measured in both the computing time (CPU) and 
the number of full iterations $(k\cdot\frac{p}{m})$, which  maintains a uniform count of operations for a single pass through the rows of \(A\) across all algorithms. In this experiment, we fix $m=512$ and set $n$ to be $128,256$,  and $512$.
The results are presented in Figure \ref{figure1}. As shown in Figure \ref{figure1}, increasing the value of \( p \) leads to a rise in the number of full iterations. This indicates that smaller values of \( p \) enhance the performance of the RABK method concerning the number of full iterations, which is consistent with the discussion in Section \ref{sub-section-rabk}. However, variations in CPU time for different values of \( p \) are relatively minor, with some instances showing an increase followed by a slight decrease. This behavior is due to MATLAB performing automatic multithreading when calculating matrix-vector products, which is the bottleneck in block sampling-based methods. Consequently, despite the higher number of full iterations required by the RABK method, its performance in terms of wall-clock time remains comparable to other methods.
In the subsequent tests, we would take $p=1$.
We note that when \(p=1\), the RABK method essentially becomes the RK method \eqref{rk-gave} for GAVE.

\begin{figure}[tbhp]
	\centering
	\begin{tabular}{cc}
		\includegraphics[width=0.31\linewidth]{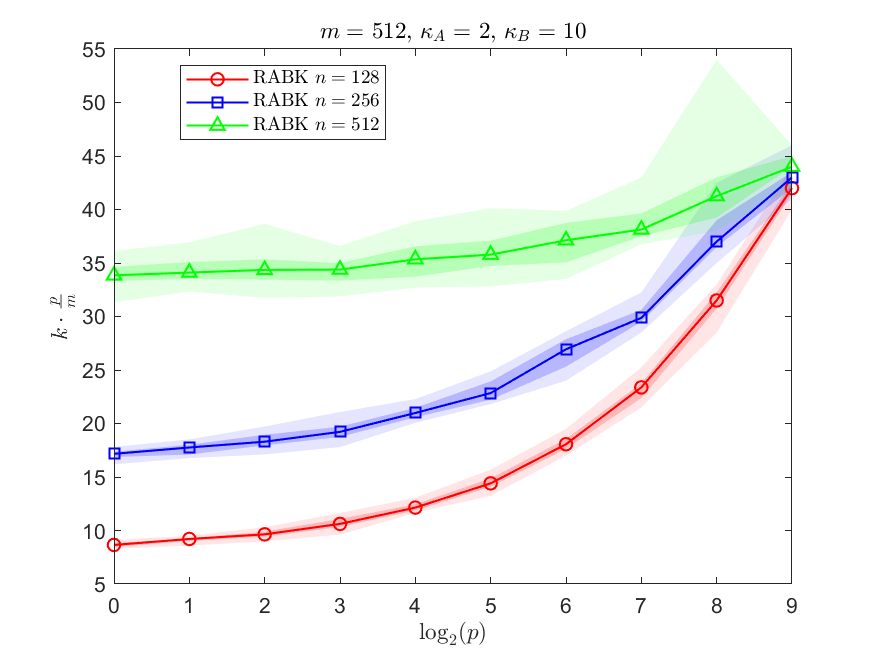}
		\includegraphics[width=0.31\linewidth]{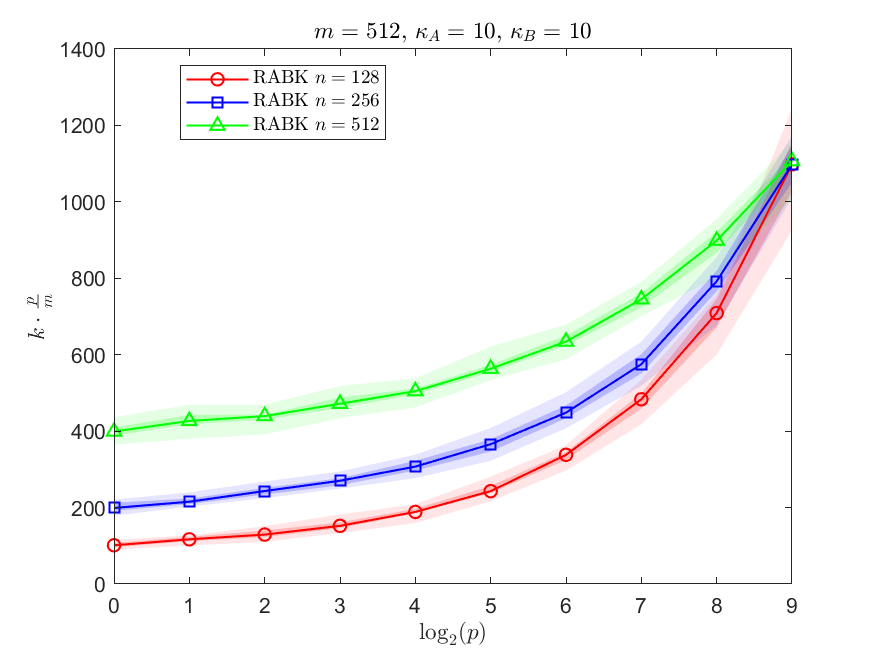}
		\includegraphics[width=0.31\linewidth]{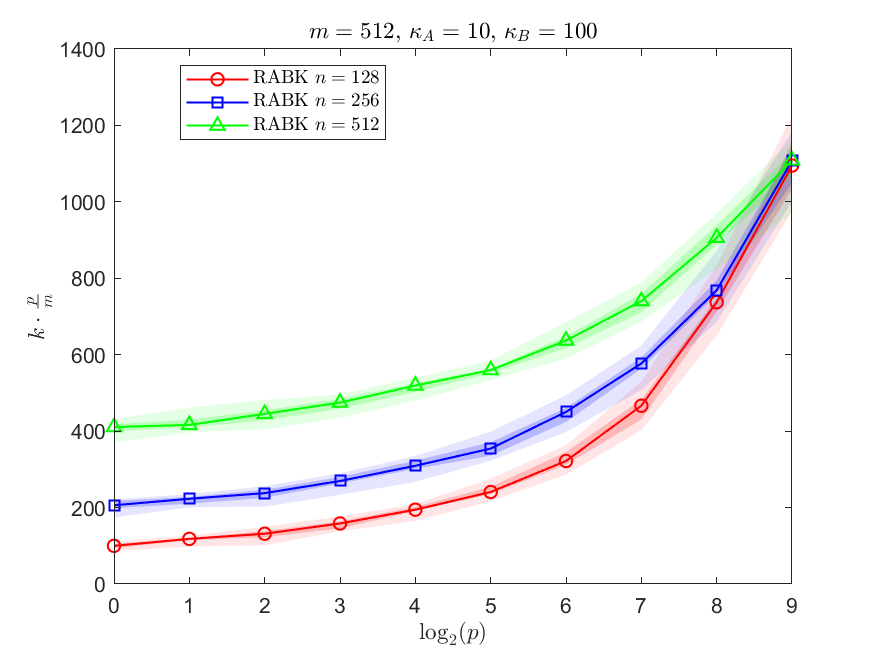}\\
		\includegraphics[width=0.31\linewidth]{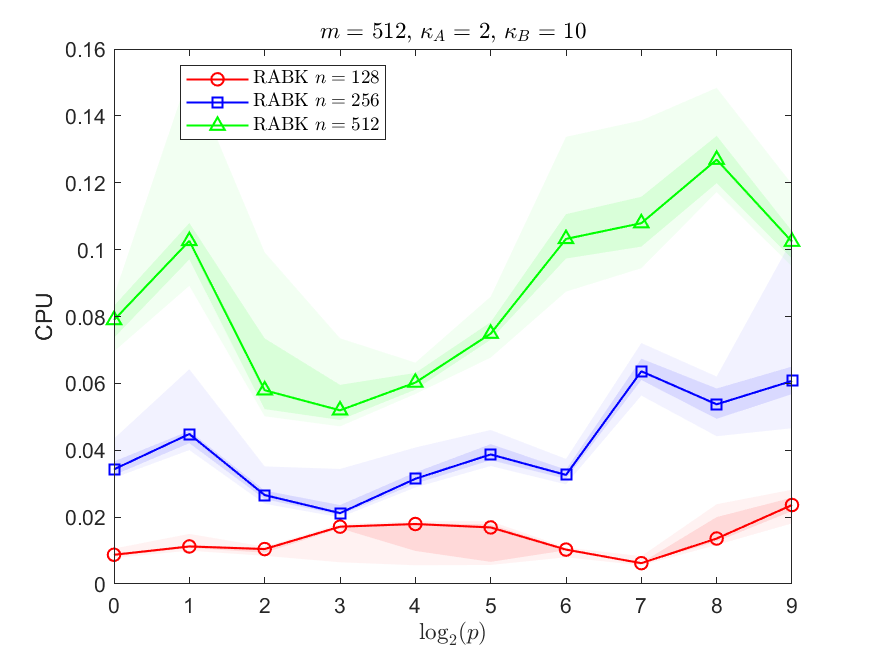}
		\includegraphics[width=0.31\linewidth]{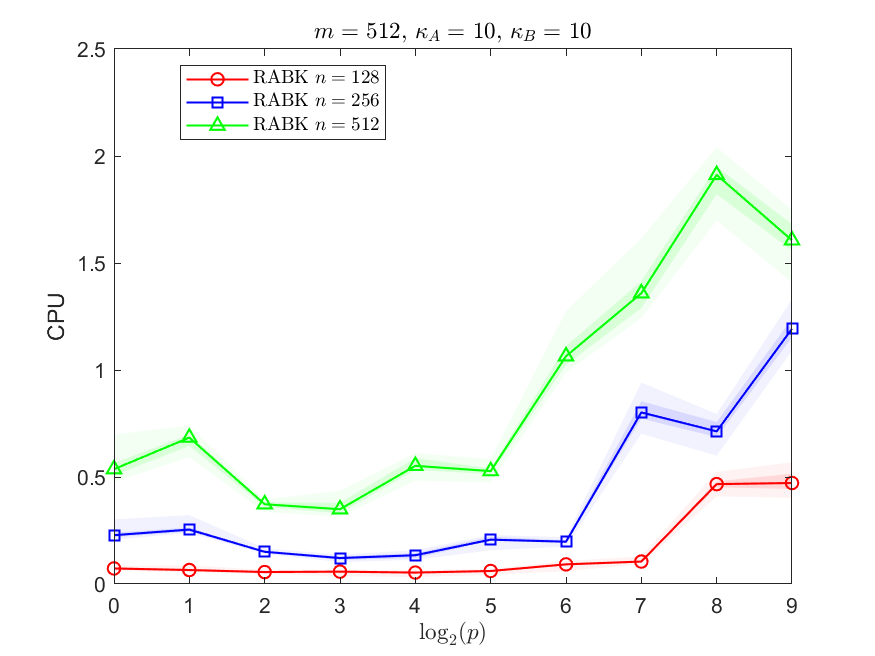}
		\includegraphics[width=0.31\linewidth]{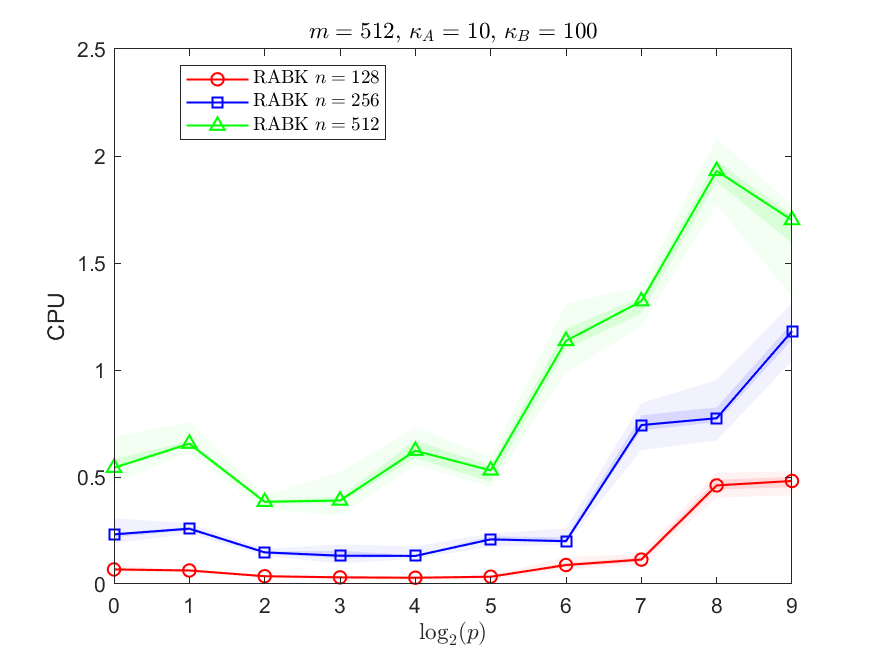}
	\end{tabular}
	\caption{Figures depict the evolution of the number of full iterations (top) and the CPU time (bottom) with respect to the block size $p$.  We fix $m=512$ and set $n$ to be $128,256$,  and $512$. All computations are terminated once  $\operatorname{RSE}<10^{-12}$.}
	\label{figure1}
\end{figure}

\subsection{Comparison to some existing methods}
In this subsection, we compare the performance of RABK with GNM, PIM, SLA, and MAP. For the case where \( m = n \), we compare RABK with GNM, PIM, and MAP, excluding SLA due to its extensive computational time requirements for these problems. For the case where \( m \neq n \), RABK is compared solely with SLA and MAP, as GNM and PIM are only capable of handling square matrices. 
The computations are terminated once the RSE is less than $10^{-12}$. Similar to Subsetion \ref{sect-comp-sketh}, the exact solution is generated by \(x^* = {\tt randn(n,1)}\). We set \(b = Ax^* - B|x^*|\),  \(a_{\min} = 2\), \(b_{\max} = 1\), and $p=1$.

In Figure \ref{figure2}, we present the CPU time comparisons for PIM, GNM, MAP, and RABK methods using square coefficient matrices, i.e., $m=n$. The results reveal that for relatively small values of $n$, the GNM, PIM, and MAP exhibit superior performance compared to the RABK method. However, as  $n$ increases, the performance of the RABK method gradually improves, eventually surpassing the other algorithms and emerging as the most efficient method. 
Figure \ref{figure3} illustrates the performance comparison among SLA, MAP, and RABK methods when applied to non-square coefficient matrices. The results demonstrate the efficiency of the RABK method.



\begin{figure}[tbhp]
	\centering
	\begin{tabular}{cc}
		\includegraphics[width=0.31\linewidth]{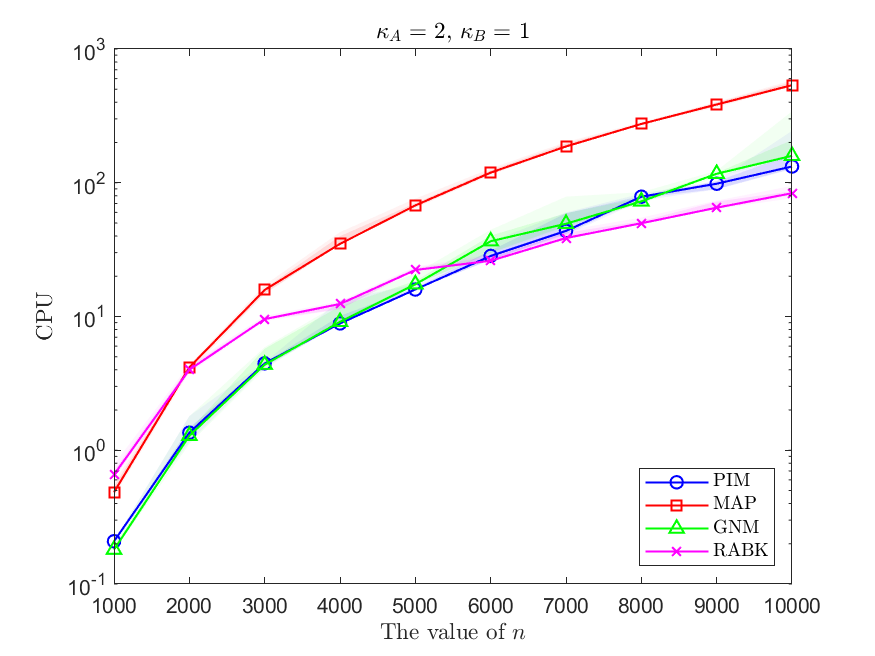}
		\includegraphics[width=0.31\linewidth]{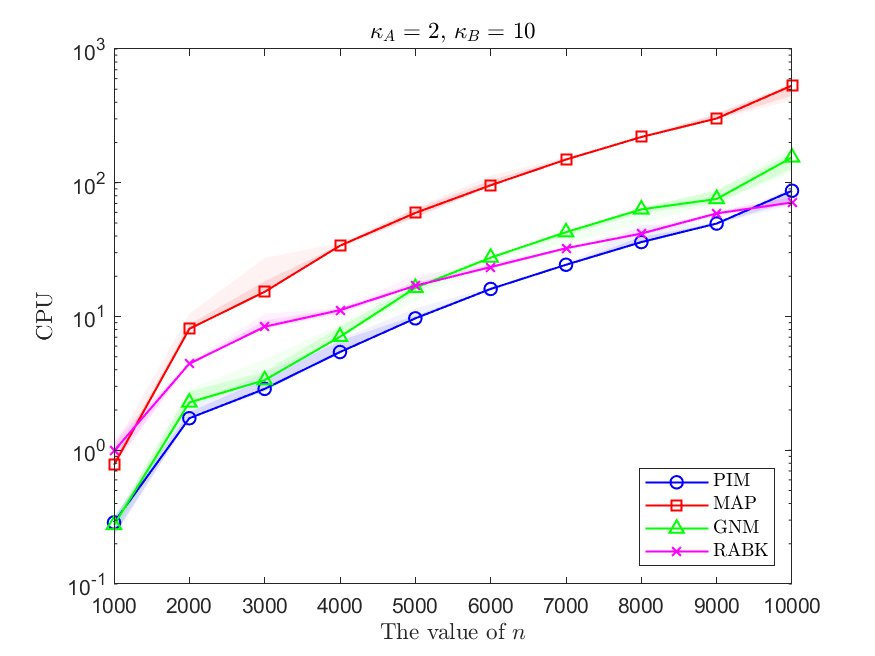}
		\includegraphics[width=0.31\linewidth]{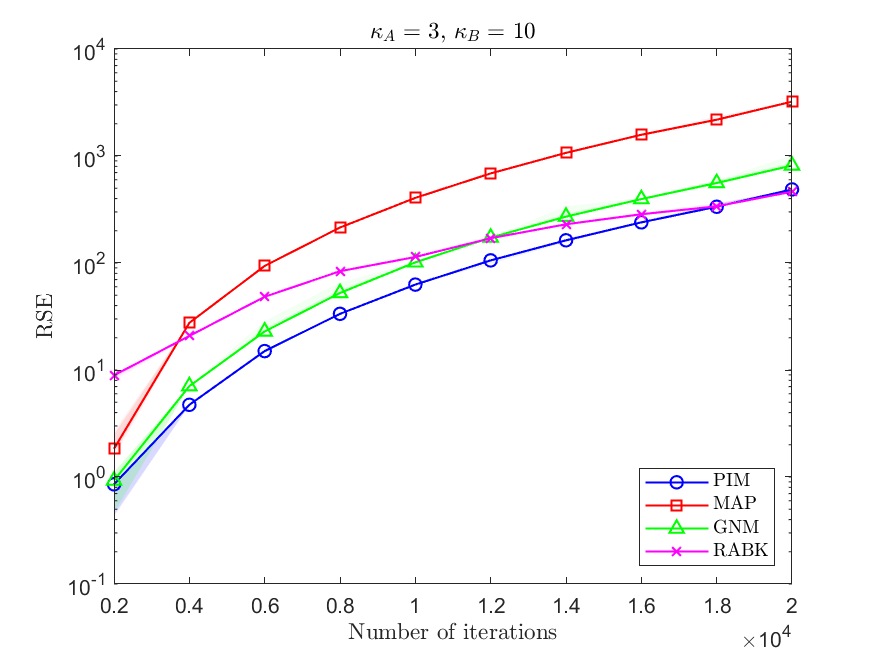}
	\end{tabular}
	\caption{
		Figures depict the evolution of CPU time vs the increasing dimensions of the coefficient matrices. We have $m=n$ and the title of each plot indicates the values of $\kappa_A$ and $\kappa_B$.  }
	\label{figure2}
\end{figure}

\begin{figure}[tbhp]
	\centering
	\begin{tabular}{cc}
		\includegraphics[width=0.31\linewidth]{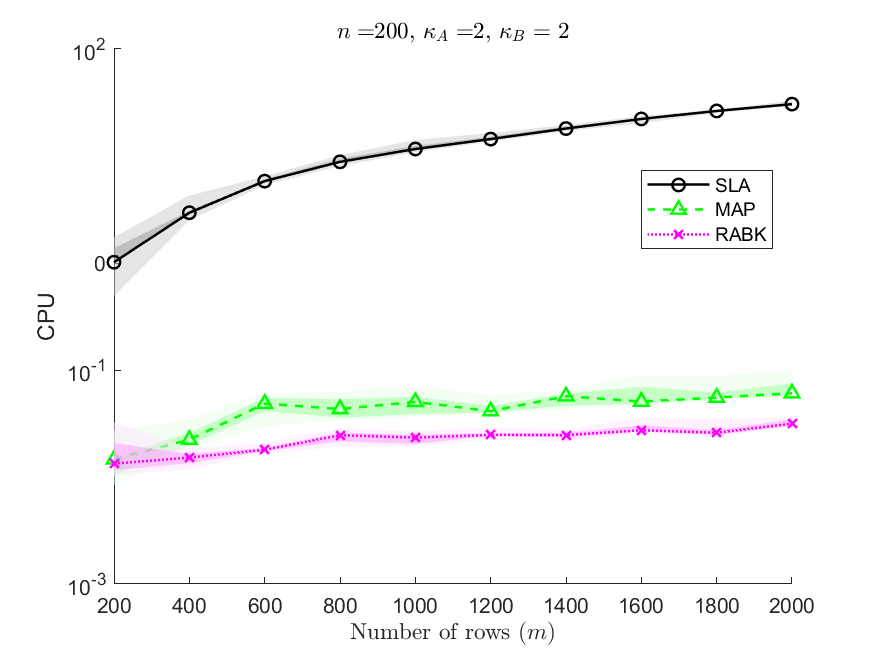}
		\includegraphics[width=0.31\linewidth]{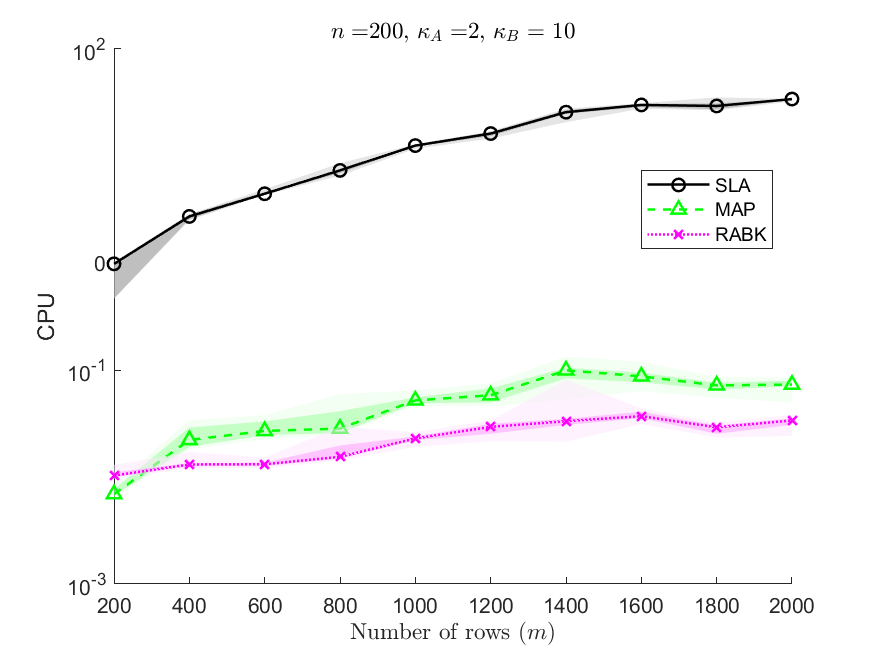}
		\includegraphics[width=0.31\linewidth]{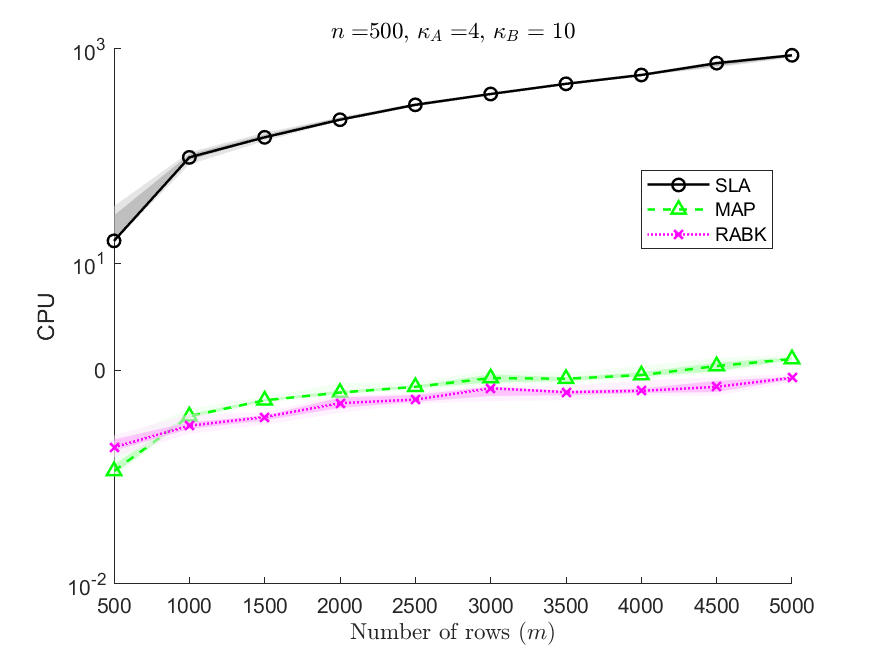}
	\end{tabular}
	\caption{Comparison of SLA, MAP, and RABK with non-square coefficient matrices. Figures depict the CPU time (in seconds) vs increasing number of rows. The title of each plot indicates the values of $n,\kappa_A$, and $\kappa_B$. }
	\label{figure3}
\end{figure}

\subsection{Ridge Regression}
Ridge regression is a popular parameter estimation method used to address the collinearity problem frequently arising in multiple linear regression \cite{mcdonald2009ridge,daniilidis2024solving}.
We consider an asymmetric ridge regression of the form:
$$
\min_{x\in\mathbb{R}^n} h(x)+\sum_{i=1}^{n}\left(\lambda_i\max\{x_i,0\}^2+\mu_i\max\{-x_i,0\}^2 \right),
$$ 
where the penalty parameters  $\lambda_i$ and $\mu_i$ satisfy $\lambda_i\neq\mu_i, i=1,\ldots,n$. We note that the case $\lambda_i=\mu_i=\lambda$ for every $i$ corresponds to the classical ridge regression, which will not be considered here. The case $\lambda_i=0$ for all $i$ and $\mu_i>0$ corresponds to a penalization of the negativity of the coefficients, promoting  solutions with positive coefficients. The necessary condition for optimality is given by:
$$
\nabla h(x)+2\lambda \circ\max\{x,0\}-2\mu\circ\max\{-x,0\}=0, 
$$
where $\circ$  denotes the componentwise (or Hadamard) product. Noting that $2\max\{x,0\}=|x|+x$ and $2\max\{-x,0\}=|x|-x$, we end up with the following problem
\begin{equation}\label{first-RR}
	\nabla h(x)+(\mu+\lambda)\circ x-(\mu-\lambda)\circ|x|=0.
\end{equation}
If we set $\lambda_i=\bar{\lambda}$ and $\mu_i=\bar{\mu}$ for every $i\in[n]$ and consider the loss function $h(x)=\frac{1}{2}\|Lx-c\|^2_2$, where $L\in\mathbb{R}^{m\times n}$ and $c\in\mathbb{R}^m$, then \eqref{first-RR} becomes the following GAVE problem
$$
\left(L^\top L+(\bar{\mu}+\bar{\lambda})I\right) x-(\bar{\mu}-\bar{\lambda})|x|=L^\top c.
$$ 

Figure \ref{figure4} illustrates our experimental results for varying values of $\bar{\lambda}$, and $\bar{\mu}$. The matrix $L$ and vector $c$ are randomly generated with values in $[-5,5]$. The computations are terminated once the RRE is less than $10^{-12}$ and set $p=1$ for the RABK method. 
The performance of the algorithms is measured in the computing time (CPU) and the number of full iterations. From the iteration schemes of PIM, GNM, and MAP, it can be seen that one iteration of PIM and GNM corresponds to one full iteration, while one iteration of MAP corresponds to two full iterations. It can be observed from Figure \ref{figure4} that RABK outperforms the other methods in terms of the number of full iterations.  However, in terms of CPU time, both PIM and GNM outperform RABK, even though we only show the CPU time taken by RABK to execute \eqref{rabk-gave}. This is because the parallel processing capability of  MATLAB can help alleviate the computational costs associated with computing the inverse of a matrix.

\begin{figure}[tbhp]
	\centering
	\begin{tabular}{cc}
		\includegraphics[width=0.31\linewidth]{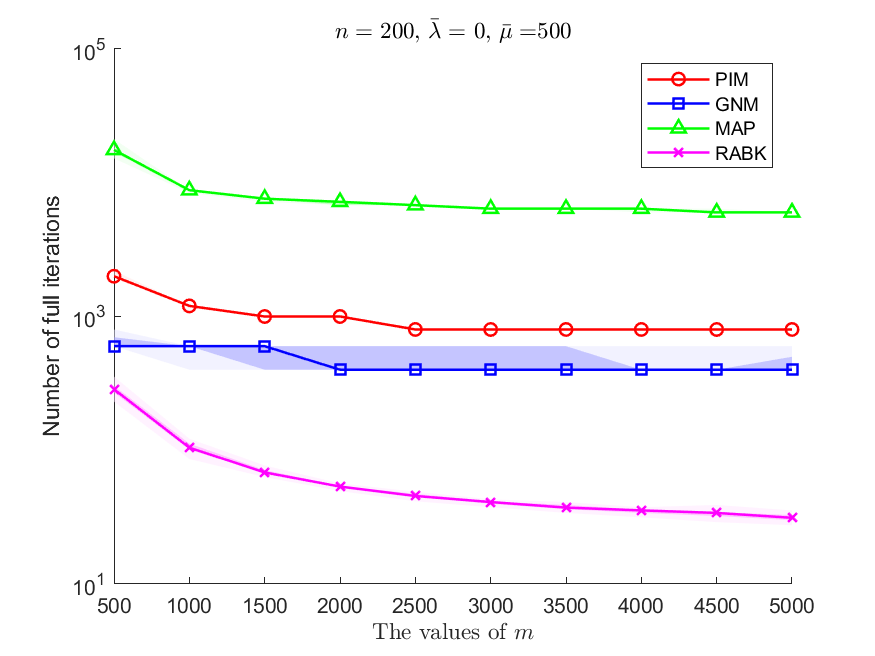}
		\includegraphics[width=0.31\linewidth]{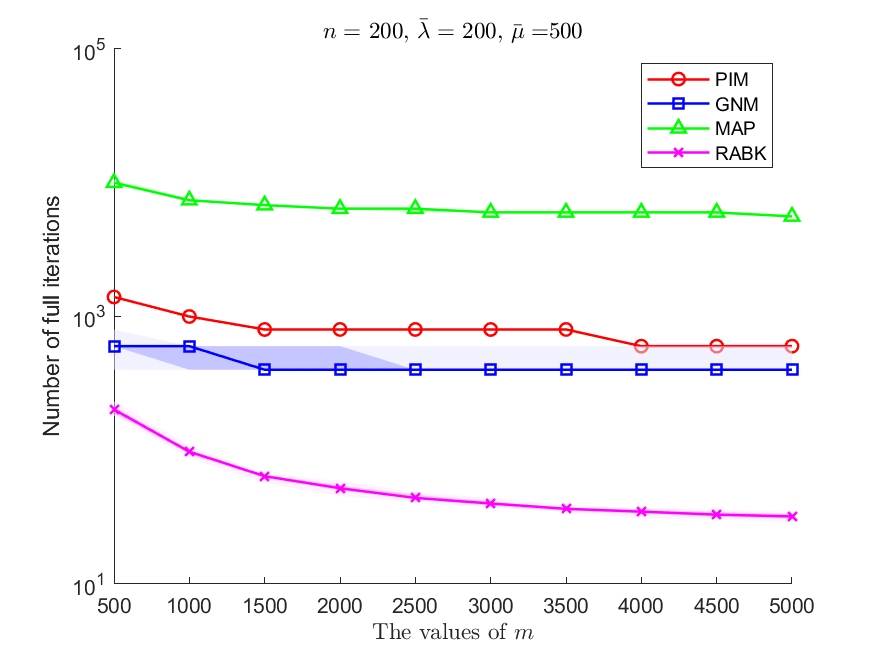}
		\includegraphics[width=0.31\linewidth]{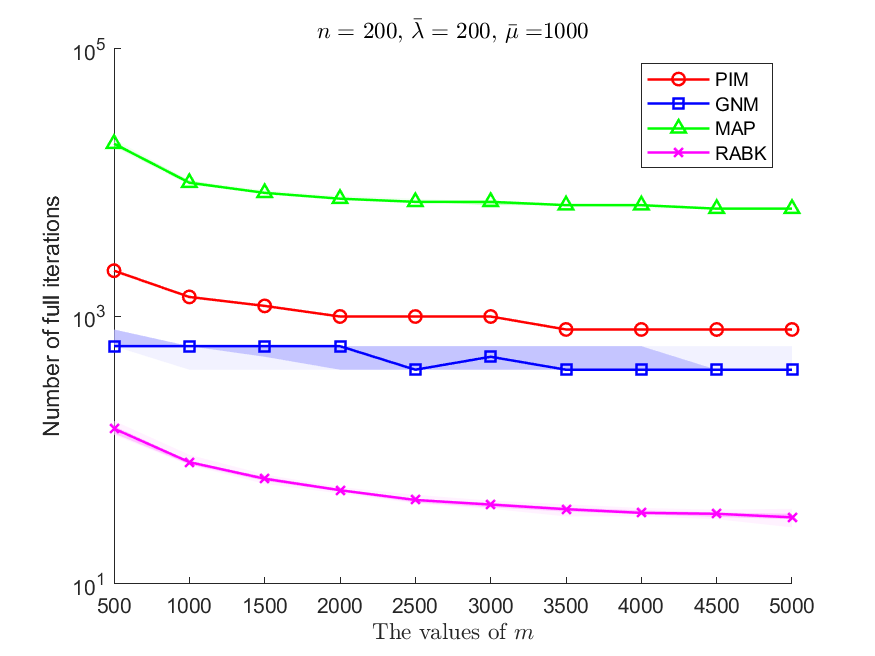}\\
		\includegraphics[width=0.31\linewidth]{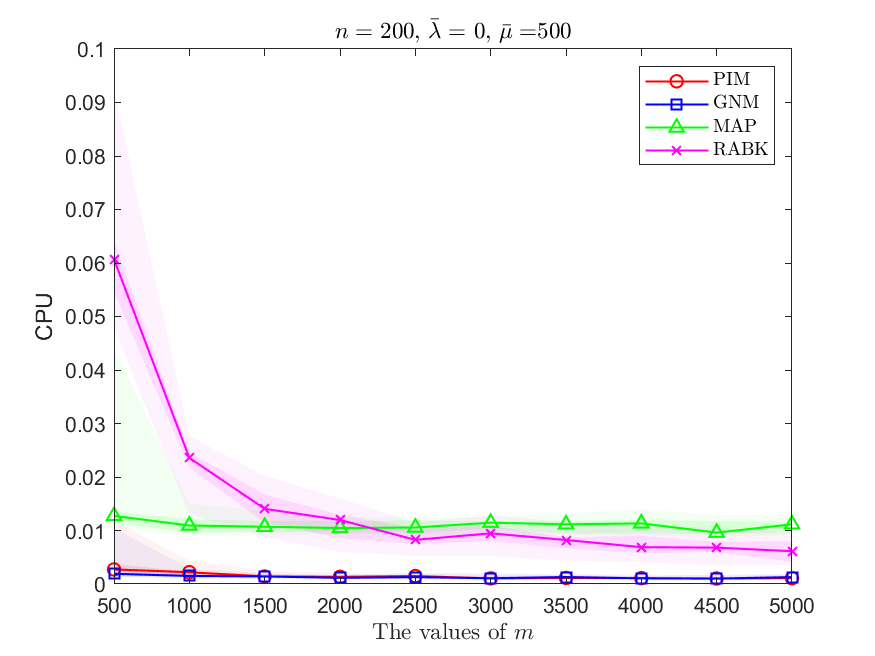}
		\includegraphics[width=0.31\linewidth]{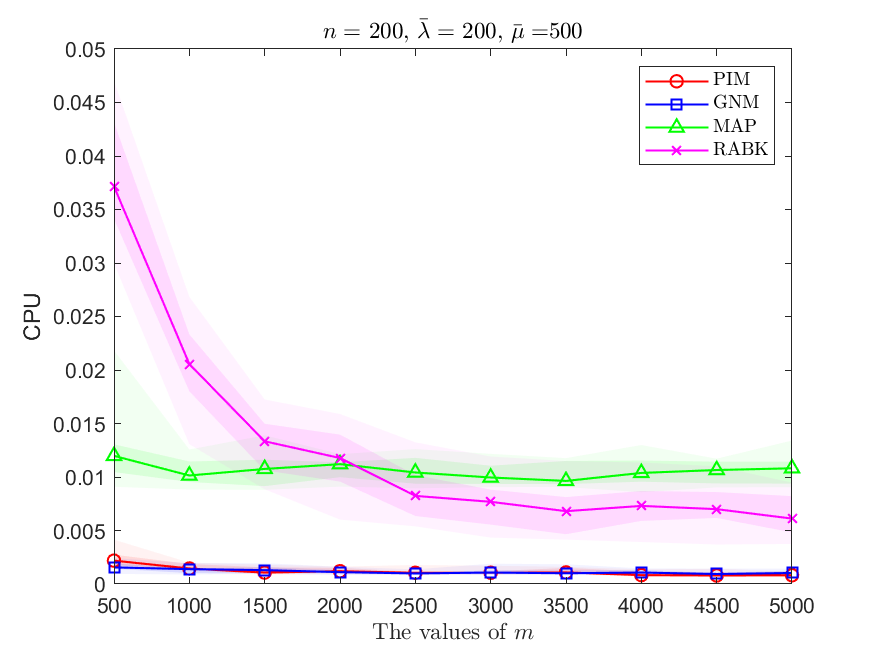}
		\includegraphics[width=0.31\linewidth]{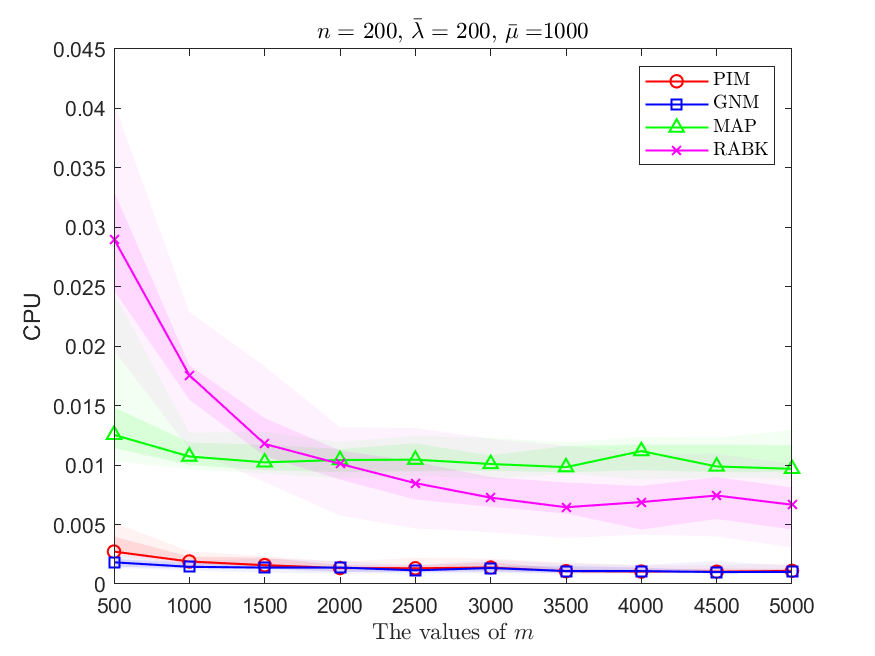}
	\end{tabular}
	\caption{Comparison of PIM, GNM, MAP, and RABK for the asymmetric ridge regression. Figures depict the number of full iterations (left) and CPU time (right) vs increasing of $m$. The title of each plot indicates the values of $n$,  $\bar{\lambda}$, and $\bar{\mu}$. }
	\label{figure4}
\end{figure}

\section{Concluding remarks}
We have proposed a simple and versatile randomized iterative algorithmic framework for solving GAVE, applicable to both square and non-square coefficient matrices. By manipulating the probability spaces, our algorithmic framework can recover a wide range of popular algorithms, including  the PIM and the RK method. The flexibility of our approach also enables us to modify the parameter matrices to create entirely new methods tailored to specific problems. We have provided conditions that ensure the unique solvability and established error bounds for GAVE. Numerical results confirm the efficiency of our proposed method. 

This work not only broadens the applicability of randomized iterative methods, but it also opens up new possibilities for further algorithmic innovations for GAVE.
The momentum acceleration technique, a strategy recognized for its effectiveness in enhancing the performance of optimization methods \cite{Han2022-xh,loizou2020momentum,zeng2023adaptive}, presents a promising area of exploration. Investigating a momentum variant of randomized iterative methods for the GAVE problem could yield significant advancements in this field.
Moreover, real-world applications often present challenges in the form of uncertainty or noise in the input data or computation. Inexact algorithms have been recognized for their ability to provide robustness and flexibility in handling such uncertainties \cite{chen2023exact,loizou2020convergence}. Analyzing inexact variants of randomized iterative methods for solving GAVE would also be a valuable topic.

\bibliographystyle{plain}
\bibliography{references}

\section{Appendix. Proof of the main results}
\label{sec:appd}


%

\subsection{Proof of Theorem \ref{THM-suf}}
The following lemma is essential for proving Theorem \ref{THM-suf}.
\begin{lemma}
	\label{lemma-exist}
	Suppose that $m\leq n$. If there exists a subset $\mathcal{S}\subset [n]$ with $|\mathcal{S}|=m$ such that for any $D\in[-I_m,I_m]$, the matrix $A_{:,\mathcal{S}} + B_{:,\mathcal{S}}D$ is nonsingular, then for any $b\in\mathbb{R}^m$, the GAVE \eqref{GAVE} 
	is solvable. In particular,  the  GAVE \eqref{GAVE} has a unique solution  for any $b\in\mathbb{R}^m$ if and only if $m=n$.
\end{lemma}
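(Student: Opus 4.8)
The plan is to reduce this non-square statement to the square case covered by Theorem \ref{unique-solution}, by collapsing attention onto the $m$ columns indexed by $\mathcal{S}$.

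First I would set up a coordinate embedding $\iota:\mathbb{R}^m\to\mathbb{R}^n$ sending $y\in\mathbb{R}^m$ to the vector $x$ whose components on $\mathcal{S}$ equal those of $y$ (in the natural order) and whose components on $[n]\setminus\mathcal{S}$ vanish. Since $|0|=0$, one has $|\iota(y)|=\iota(|y|)$, and expanding the matrix--vector products columnwise gives $A\iota(y)=A_{:,\mathcal{S}}y$ and $B|\iota(y)|=B_{:,\mathcal{S}}|y|$. Hence $F_{A,B}\circ\iota=F_{A_{:,\mathcal{S}},B_{:,\mathcal{S}}}$ with $F$ as in \eqref{mapF}; in words, a solution of the square GAVE associated with the $m\times m$ pair $(A_{:,\mathcal{S}},B_{:,\mathcal{S}})$ lifts, via $\iota$, to a solution of \eqref{GAVE}.

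Next I would apply Theorem \ref{unique-solution} to the $m\times m$ matrices $A_{:,\mathcal{S}}$ and $B_{:,\mathcal{S}}$. The hypothesis that $A_{:,\mathcal{S}}+B_{:,\mathcal{S}}D$ is nonsingular for every $D\in[-I_m,I_m]$ is exactly what that theorem requires, so $A_{:,\mathcal{S}}y-B_{:,\mathcal{S}}|y|=b$ has a (unique) solution $y^\star$ for each $b\in\mathbb{R}^m$; then $x^\star:=\iota(y^\star)$ solves \eqref{GAVE}, establishing solvability for every $b$. For the ``in particular'' part, when $m=n$ we have $\mathcal{S}=[n]$, $A_{:,\mathcal{S}}=A$, $B_{:,\mathcal{S}}=B$, and Theorem \ref{unique-solution} directly yields a unique solution for every $b$; conversely, if $m<n$, then Lemma \ref{Lemma-surjective-no}(i) exhibits a $b\in\mathbb{R}^m$ for which \eqref{GAVE} has infinitely many solutions, so unique solvability for all $b$ cannot hold unless $m=n$.

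I do not anticipate a serious obstacle here; the one point deserving care is conceptual rather than technical: producing a good column block $\mathcal{S}$ only forces \emph{solvability} when $m<n$, never uniqueness (solutions supported outside $\mathcal{S}$, or with other sign patterns, can coexist), which is precisely why the uniqueness half has to come from the dimension obstruction in Lemma \ref{Lemma-surjective-no}(i) rather than from the reduction itself.
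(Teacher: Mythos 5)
Your proposal is correct and follows essentially the same route as the paper: extend the unique solution of the square GAVE on the columns $\mathcal{S}$ by zeros on $\mathcal{S}^c$ (the paper writes this as the block decomposition \eqref{equ-gave} rather than via an explicit embedding $\iota$), and derive the ``only if'' of the uniqueness claim from the dimension obstruction for $m<n$ (the paper cites Theorem \ref{thm-solvability}, which itself rests on Lemma \ref{Lemma-surjective-no}, so your direct appeal to Lemma \ref{Lemma-surjective-no}(i) is the same idea). Your closing remark that the reduction yields only solvability, never uniqueness, when $m<n$ is exactly the right caveat.
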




\begin{proof}
	We can rewrite \eqref{GAVE} as
	\begin{equation}\label{equ-gave}
		\begin{bmatrix}
			A_{:,\mathcal{S}} \
			A_{:,\mathcal{S}^c}
		\end{bmatrix}
		\begin{bmatrix}
			x_{\mathcal{S}} \\
			x_{\mathcal{S}^c}
		\end{bmatrix}
		-\begin{bmatrix}
			B_{:,\mathcal{S}} \
			B_{:,\mathcal{S}^c}
		\end{bmatrix}\begin{bmatrix}
			|x_{\mathcal{S}}| \\
			|x_{\mathcal{S}^c}|
		\end{bmatrix}=b
	\end{equation}
	It is follows from Theorem \ref{unique-solution} that 
	$$A_{:,\mathcal{S}} x_{\mathcal{S}} -B_{:,\mathcal{S}}|x_{\mathcal{S}} |=b$$
	has a unique solution, denoted as  $x_{\mathcal{S}}^*$. Therefore, $(x_{\mathcal{S}}^*,0)^\top$ is a solution to \eqref{equ-gave}, i.e., a solution to the  GAVE \eqref{GAVE}. Hence, the  GAVE \eqref{GAVE} is solvable. 
	If $m=n$, Theorem \ref{unique-solution} indicates that the  GAVE \eqref{GAVE} now has unique solution. On the other hand, if the  GAVE \eqref{GAVE} has unique solution for any $b\in\mathbb{R}^m$,  Theorem \ref{thm-solvability} ensures that $m=n$. This completes the proof of this lemma.
\end{proof}

Now we are ready to prove Theorem \ref{THM-suf}.

\begin{proof}[Proof Theorem \ref{THM-suf}]
	$(\romannumeral1)$ Suppose that there exists a $\bar{b}\in\mathbb{R}^m$ such the GAVE \eqref{GAVE} is unsolvable. By Lemma \ref{lemma-exist}, we know that for every subset $\mathcal{S}\subset [n]$ with $|\mathcal{S}|=m$, there exists a $D_{\mathcal{S}}\in[-I_m,I_m]$ such that $A_{:,\mathcal{S}} + B_{:,\mathcal{S}}D_{\mathcal{S}}$ is singular.  
	Then we have
	$$
	0=\sigma_{m}\left(MA_{:,\mathcal{S}} + MB_{:,\mathcal{S}}D_{\mathcal{S}}\right)
	\geq\sigma_{m}\left(MA_{:,\mathcal{S}}\right)- \|M B_{:,\mathcal{S}}D_{\mathcal{S}}\|_2
	\geq \sigma_{m}\left(MA\right)-\|MB\|_2,
	$$
	where the last inequality follows from the Cauchy interlacing theorem. This contradicts to the assumption that $\sigma_{m}\left(MA\right)>\|MB\|_2$. Hence, we know that the GAVE \eqref{GAVE} is solvable for each $b\in\mathbb{R}^m$.
	
	$(\romannumeral2)$ Following a similar argument as in \((\romannumeral1)\), we know that the GAVE \eqref{GAVE} is solvable for any \(b \in \mathbb{R}^m\). Together with Lemma \ref{lemma-exist}, this implies that the solution is unique.
	
	$(\romannumeral3)$	Let	$x_1,x_2\in \mathcal{X}^*$, we have
	$$
	\begin{aligned}
		0=&\|MAx_1-MB|x_1|-MAx_2+MB|x_2|\|_2\\
		=&\|MA(x_1-x_2)-MB(|x_1|-|x_2|)\|_2\\
		\geq&\|MA(x_1-x_2)\|_2-\|MB(|x_1|-|x_2|)\|_2\\
		\geq &\left( \sigma_{n}(MA)-\|MB\|_2\right)\|x_1-x_2\|_2\\
		\geq&0.
	\end{aligned}
	$$
	Hence, $x_1=x_2$. This completes the proof of this theorem.
\end{proof}

\subsection{Proof of Theorem \ref{EB-full-rank}}
The following lemmas are useful for our proof.

\begin{lemma}[\cite{hiriart1980mean}, Theorem $8$]
	\label{lemma-MVT}
	Let $F:\mathbb{R}^{n}\to\mathbb{R}^m$ be a locally Lipschitz function on open subset $\Omega$ of $\mathbb{R}^n$ and let  $x,y\in\Omega$. Then
	there exist $\xi_i\geq 0$ with $\sum_{i=1}^m\xi_i=1$, vectors $z_i\in \operatorname{co}(\{x,y\})$, and matrices $M_i \in \partial F(z_i),i=1,\ldots,m$, such that
	$$
	F(x)-F(y)=\sum_{i=1}^{m}\xi_i M_i(x-y).
	$$
\end{lemma}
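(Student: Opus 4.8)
\emph{Approach.} The statement is the set-valued (nonsmooth) mean value theorem, and the plan is to derive it from the scalar Lebourg mean value theorem combined with a connectedness-refined Carathéodory argument. First I would establish the \emph{mean value inclusion} $F(x)-F(y)\in\overline{\operatorname{co}}(C)$, where
\[
C:=\{M(x-y): z\in\operatorname{co}(\{x,y\}),\ M\in\partial F(z)\}\subseteq\mathbb{R}^m
\]
and $\overline{\operatorname{co}}$ denotes the closed convex hull. To prove this membership I would use the support-function characterization: a vector $w$ lies in $\overline{\operatorname{co}}(C)$ iff $\langle p,w\rangle\le\sup_{c\in C}\langle p,c\rangle$ for every $p\in\mathbb{R}^m$. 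Fixing $p$, set $\phi(t):=\langle p,F(y+t(x-y))\rangle$, a scalar locally Lipschitz function on $[0,1]$. The scalar Lebourg mean value theorem yields $t^\ast\in(0,1)$ and $\zeta\in\partial\phi(t^\ast)$ with $\phi(1)-\phi(0)=\zeta$. The Clarke chain rule, together with the inclusion $\partial\langle p,F\rangle(z)\subseteq\{M^\top p: M\in\partial F(z)\}$ for the generalized Jacobian, gives $\zeta=\langle p,M(x-y)\rangle$ for some $z^\ast\in\operatorname{co}(\{x,y\})$ and $M\in\partial F(z^\ast)$. Hence $\langle p,F(x)-F(y)\rangle\le\sup_{c\in C}\langle p,c\rangle$ for every $p$, which is exactly the claimed inclusion.

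\emph{Sharpening the Carathéodory count.} The heart of the argument is to upgrade this inclusion, which a priori produces only a convex combination of $m+1$ points via the classical Carathéodory theorem, to the representation with exactly $m$ terms demanded by the statement. For this I would show that $C$ is both compact and connected. Compactness follows because $\operatorname{co}(\{x,y\})$ is compact and $\partial F(\cdot)$ is a compact-valued, upper semicontinuous set-valued map (its values are bounded by the local Lipschitz constant of $F$), so the graph $\{(z,M): z\in\operatorname{co}(\{x,y\}),\ M\in\partial F(z)\}$ is compact; then $C$ is its image under the continuous map $(z,M)\mapsto M(x-y)$. Connectedness follows because the domain $\operatorname{co}(\{x,y\})$ is connected and $\partial F(\cdot)$ is upper semicontinuous with convex (hence connected) compact values, so—by a standard result in set-valued analysis—its graph over this segment is connected, and $C$ is again a continuous image. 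In particular $\operatorname{co}(C)$ is closed, so $\overline{\operatorname{co}}(C)=\operatorname{co}(C)$ and the inclusion from the first step reads $F(x)-F(y)\in\operatorname{co}(C)$.

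\emph{Conclusion and main obstacle.} Finally I would invoke the connectedness-refined Carathéodory theorem (the Fenchel--Bunt theorem): if $C\subseteq\mathbb{R}^m$ is connected, then every point of $\operatorname{co}(C)$ is a convex combination of at most $m$ points of $C$. Writing $F(x)-F(y)=\sum_{i=1}^m\xi_i c_i$ with $c_i\in C$, $\xi_i\ge0$, $\sum_{i=1}^m\xi_i=1$, and unpacking each $c_i=M_i(x-y)$ with $M_i\in\partial F(z_i)$ and $z_i\in\operatorname{co}(\{x,y\})$, yields precisely the asserted identity. I expect the main obstacle to be the second step: verifying the connectedness of $C$ and correctly applying the refined Carathéodory bound, since the reduction from $m+1$ to exactly $m$ summands hinges entirely on connectedness, which in turn rests on the upper semicontinuity and convex-valuedness of the generalized Jacobian. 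By comparison, the mean value inclusion of the first step and the compactness of $C$ are routine applications of Clarke's nonsmooth calculus.
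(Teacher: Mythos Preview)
The paper does not prove this lemma; it is simply quoted from the external reference \cite{hiriart1980mean} and used as a black box in the proof of Lemma~\ref{EB-MV}. Your proposal is therefore not competing with any argument in the paper, and your sketch is essentially the classical proof due to Hiriart-Urruty: Lebourg's scalar mean value theorem plus a support-function argument gives the inclusion $F(x)-F(y)\in\operatorname{co}(C)$, and the Fenchel--Bunt refinement of Carath\'eodory (valid because $C$ is compact and connected, the latter following from upper semicontinuity of $\partial F$ with convex values over the segment) reduces the number of summands from $m+1$ to $m$. The approach is correct and there is no gap.
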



\begin{lemma}\label{EB-MV}
	For any $x,y\in\mathbb{R}^n$, there exists $\hat{D}:=\hat{D}_{x,y}\in[-I_n,I_n]$  such that 
	$$
	F_{A,B}(x)-F_{A,B}(y)=(A+B\hat{D})(x-y).
	$$
\end{lemma}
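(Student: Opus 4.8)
The plan is to deduce Lemma~\ref{EB-MV} from the generalized mean value theorem, Lemma~\ref{lemma-MVT}, by specializing it to the map $F_{A,B}(x) = Ax - B|x|$ and computing its generalized Jacobian explicitly. First I would compute $\partial F_{A,B}(z)$ for an arbitrary point $z \in \mathbb{R}^n$. Since $x \mapsto Ax$ is linear with constant Jacobian $A$, and $x \mapsto B|x|$ is a composition of the linear map $B$ with the componentwise absolute value, the chain rule for Clarke's generalized Jacobian gives $\partial F_{A,B}(z) \subseteq \{A - BD : D \in \partial |\cdot|(z)\}$, where $\partial|\cdot|(z)$ consists of diagonal matrices $D = \operatorname{diag}(d)$ with $d_j = \operatorname{sign}(z_j)$ if $z_j \neq 0$ and $d_j \in [-1,1]$ if $z_j = 0$. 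In all cases, any such $D$ lies in $[-I_n, I_n]$. Hence every matrix in $\partial F_{A,B}(z)$ has the form $A + B\tilde D$ for some $\tilde D \in [-I_n, I_n]$ (absorbing the sign into $\tilde D = -D$, or just noting the interval is symmetric).

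Next I would apply Lemma~\ref{lemma-MVT} with $\Omega = \mathbb{R}^n$: there exist $\xi_i \geq 0$ with $\sum_{i=1}^m \xi_i = 1$, points $z_i \in \operatorname{co}(\{x,y\})$, and matrices $M_i \in \partial F_{A,B}(z_i)$ such that $F_{A,B}(x) - F_{A,B}(y) = \sum_{i=1}^m \xi_i M_i (x-y)$. By the previous paragraph, each $M_i = A + B D_i$ for some $D_i \in [-I_n, I_n]$. Therefore
$$
F_{A,B}(x) - F_{A,B}(y) = \sum_{i=1}^m \xi_i (A + B D_i)(x-y) = \left(A + B \Big(\sum_{i=1}^m \xi_i D_i\Big)\right)(x-y),
$$
using $\sum_i \xi_i = 1$. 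Setting $\hat D := \hat D_{x,y} = \sum_{i=1}^m \xi_i D_i$, it remains only to check that $\hat D \in [-I_n, I_n]$: this is immediate because $[-I_n, I_n]$ is a convex set (it is the set of diagonal matrices with diagonal entries in $[-1,1]$, a box, hence convex) and $\hat D$ is a convex combination of the $D_i \in [-I_n, I_n]$.

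The one genuine technical point — and the main obstacle — is justifying the formula for $\partial F_{A,B}$, i.e. that the generalized Jacobian of $Ax - B|x|$ is contained in (indeed equals, but containment is all we need) $\{A - B\operatorname{diag}(d)\}$ over the appropriate $d$'s. The cleanest route is to invoke the standard chain rule / sum rule for Clarke's generalized Jacobian (see \cite{clarke1990optimization}): $\partial(G+H) \subseteq \partial G + \partial H$ with equality when one summand is $C^1$, and the generalized Jacobian of the separable map $x \mapsto |x|$ is the set of diagonal matrices described above. Alternatively, and perhaps more self-containedly, one can argue directly from the definition via limits of gradients at points of differentiability: $F_{A,B}$ is differentiable precisely at points with all coordinates nonzero, where $\nabla F_{A,B}(x) = A - B\operatorname{diag}(\operatorname{sign}(x))$, and taking limits $x_n \to z$ and then convex hulls produces exactly matrices of the form $A - B\operatorname{diag}(d)$ with $d \in [-1,1]^n$ matching the sign pattern of $z$ on its nonzero coordinates. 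Either way the conclusion $M_i \in \{A + BD : D \in [-I_n,I_n]\}$ follows, and the rest is the short convexity argument above.
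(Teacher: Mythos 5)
Your proof is correct and follows essentially the same route as the paper: apply the nonsmooth mean value theorem (Lemma \ref{lemma-MVT}), observe that $\partial F_{A,B}(z)\subseteq\{A+B\operatorname{diag}(d):\|d\|_\infty\leq 1\}$, and use convexity of this set to collapse the convex combination into a single $A+B\hat D$ with $\hat D\in[-I_n,I_n]$. The only difference is that you spell out the computation of the generalized Jacobian of $x\mapsto|x|$, which the paper simply asserts.
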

\begin{proof}
	The idea of the proof is similar to that of Theorem 7 in \cite{zamani2023error}.
	By the
	mean value theorem, see Lemma \ref{lemma-MVT}, there exist $z_i\in \text{co}(\{x, y\}), M_i\in \partial F_{A,B}(z_i),$ and $ \xi_i \geq 0, i = 1,\ldots,m $ with $\sum_{i=1}^{m}\xi_i=1$ such that
	$$
	F_{A,B}(x)-F_{A,B} (y)=\sum_{i=1}^{m}\xi_i M_i(x-y).
	$$
	Note that for any $z\in \mathbb{R}^n$, $\partial F_{A,B}(z) \subseteq \{A + B\text{diag}(d) \mid \|d\|_{\infty} \leq 1\}$. Combining this result with the convexity of $\{A + B\text{diag}(d) \mid \|d\|_{\infty} \leq 1\}$, we know that there exists $\hat{D}\in[-I_n,I_n]$ such that $\sum_{i=1}^{m}\xi_i M_i=A+B\hat{D}$ and hence
	$$
	F_{A,B}(x)-F_{A,B}(y)=(A+B\hat{D})(x-y).
	$$
	This completes the proof of this lemma.
\end{proof}

\begin{proof}[Proof of Theorem \ref{EB-full-rank}]
	From Lemma \ref{EB-MV}, we know that there exists a $\hat{D}$ such that $M(A+B\hat{D})(x-x^*)=M(Ax-B|x|-b)$. Since $A+BD$ is full column rank, we have
	$$x-x^*=(MA+MB\hat{D})^{\dagger}M(Ax-B|x|-b).$$
	Thus
	$$
	\begin{aligned}
		\| x-x^*\|&=\|(MA+MB\hat{D})^{\dagger}M(Ax-B|x|-b)\|
		\\
		&\leq \max\limits_{D\in[-I_n,I_n]}\|\left(MA+MBD\right)^{\dagger}\|\cdot \|M(Ax-B|x|-b)\|,
	\end{aligned}
	$$
	which completes the proof. 
\end{proof}


\subsection{Proof of Theorem \ref{THM-asym}}

To prove Theorem \ref{THM-asym}, the following supermartingale convergence lemma is required.

\begin{lemma}[\cite{williams1991probability}, Supermartingale convergence lemma]\label{supermartingale}
	Let $v^k$ and $u^k$ be sequences of nonnegative random variables such that $\mathbb{E}[v^{k+1}\mid\mathcal{F}_k]\leq v^k-u^k$ a.s. for all $k\geq0$, where $\mathcal{F}_k$ denotes the collection $\{v^0,\ldots,v^k,u^0,\ldots,u^k\}$. Then, $v^k$ converges to a random variable $v$ a.s. and $\sum_{k=0}^{\infty}u^k<\infty$.
\end{lemma}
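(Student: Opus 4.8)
The plan is to reduce the statement to the classical almost sure convergence theorem for nonnegative supermartingales by means of an explicit compensation. The hypothesis $\mathbb{E}[v^{k+1}\mid \mathcal{F}_k] \le v^k - u^k$ says that $\{v^k\}$ fails to be a supermartingale only by the nonnegative correction terms $u^k$; the idea is to absorb the accumulated corrections into the process so that what remains is a genuine nonnegative supermartingale.

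First I would define the compensated process
$$
w^k := v^k + \sum_{j=0}^{k-1} u^j, \qquad w^0 := v^0,
$$
and verify that it is a nonnegative supermartingale with respect to $\{\mathcal{F}_k\}$. Nonnegativity is immediate from $v^k\ge 0$ and $u^j\ge 0$. For the supermartingale property, note that $u^0,\dots,u^k$ are $\mathcal{F}_k$-measurable, so a direct computation using the hypothesis gives
$$
\mathbb{E}[w^{k+1}\mid \mathcal{F}_k] = \mathbb{E}[v^{k+1}\mid \mathcal{F}_k] + \sum_{j=0}^{k} u^j \le (v^k - u^k) + \sum_{j=0}^{k} u^j = v^k + \sum_{j=0}^{k-1} u^j = w^k.
$$
Here I assume, as is implicit in the statement for the conditional expectations to be defined, that each $v^k$ is integrable; then $\mathbb{E}[w^k] \le \mathbb{E}[w^0] = \mathbb{E}[v^0] < \infty$, so $\{w^k\}$ is bounded in $L^1$. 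Invoking Doob's convergence theorem for an $L^1$-bounded (in particular, nonnegative) supermartingale, I conclude that $w^k \to w$ almost surely for some integrable, hence almost surely finite, random variable $w$.

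Next I would extract both conclusions from this single limit. Write $S_k := \sum_{j=0}^{k-1} u^j$, which is nondecreasing in $k$ because $u^j\ge 0$, and therefore converges pointwise to $S_\infty := \sum_{j=0}^{\infty} u^j \in [0,\infty]$. From $w^k = v^k + S_k$ together with $v^k \ge 0$ we obtain $S_k \le w^k$; letting $k\to\infty$ yields $S_\infty \le w < \infty$ almost surely, which is precisely $\sum_{k=0}^{\infty} u^k < \infty$ almost surely. Consequently $v^k = w^k - S_k \to w - S_\infty =: v$ almost surely, and $v$ is a finite nonnegative random variable, giving the claimed convergence of $\{v^k\}$.

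I expect the only delicate points to be (i) identifying the correct compensator $S_k$ and checking measurability carefully so that the supermartingale inequality holds exactly, and (ii) justifying the interchange of the almost sure limit of $w^k$ with the monotone limit of $S_k$. Both are handled cleanly once $w^k$ is shown to converge by Doob and $S_k$ by monotonicity, since $v^k = w^k - S_k$ then converges as the difference of two convergent sequences. The integrability needed to apply Doob is mild and is already implicit in the hypothesis that the conditional expectations $\mathbb{E}[v^{k+1}\mid\mathcal{F}_k]$ are well defined.
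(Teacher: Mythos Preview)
Your argument is correct and is the standard compensation/Doob route to this classical result. Note, however, that the paper does not actually prove this lemma: it is stated as a citation to Williams~\cite{williams1991probability} and used as a black box in the proof of Theorem~\ref{THM-asym}, so there is no ``paper's own proof'' to compare against.
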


\begin{proof}[Proof of Theorem \ref{THM-asym}]
	$(\romannumeral1)$ For any $x^*\in\mathcal{X^*}$, we have
	$$
	\begin{aligned}
		\|x^{k+1}-x^*\|^2_2=&\left\|x^k-x^*-\alpha\frac{A^\top S_kS_k^\top (Ax^k-B|x^k|-b)}{\|S_k^\top A\|^2_2}\right\|^2_2\\
		=&\|x^k-x^*\|^2_2+\alpha^2\frac{\|A^\top S_kS_k^\top (Ax^k-B|x^k|-b)\|_2^2}{\|S_k^\top A\|^4_2}\\
		&-\frac{2\alpha}{\|S_k^\top A\|^2_2}\left\langle S_k^\top (Ax^k-B|x^k|-b),S_k^\top A(x^k-x^*)\right\rangle\\
		\leq &\|x^k-x^*\|^2_2+\alpha^2\frac{\|S_k^\top (Ax^k-B|x^k|-b)\|_2^2}{\|S_k^\top A\|^2_2}\\
		&-\frac{2\alpha}{\|S_k^\top A\|^2_2}\left\langle S_k^\top (Ax^k-B|x^k|-b),S_k^\top A(x^k-x^*)\right\rangle
		\\
		= &\|x^k-x^*\|^2_2-(\alpha-\alpha^2)\frac{\|S_k^\top (Ax^k-B|x^k|-b)\|_2^2}{\|S_k^\top A\|^2_2}\\
		&+\alpha\frac{\| S_k^\top B(|x^k|-|x^*|)\|^2_2}{\|S_k^\top A\|^2_2}-\alpha\frac{\|S_k^\top A(x^k-x^*)\|^2_2}{\|S_k^\top A\|^2_2},
	\end{aligned}
	$$
	where the inequality follows from $\|A^\top S_kS_k^\top (Ax^k-B|x^k|-b)\|_2\leq \|A^\top S_k\|_2\|S_k^\top (Ax^k-B|x^k|-b)\|_2$  and the last equality follows from the facts that $-2\langle a,b\rangle=\|a-b\|^2_2-\|a\|^2_2-\|b\|^2_2$ and $Ax^*-B|x^*|=b$. Taking expectations, we have
	\begin{equation}\label{prove-i-0124}
		\begin{aligned}
			\mathbb{E}[\|x^{k+1}-x^*\|^2_2\mid x^k]\leq &\|x^k-x^*\|^2_2-(\alpha-\alpha^2)\mathbb{E}\left[\frac{\|S^\top (Ax^k-B|x^k|-b)\|_2^2}{\|S^\top A\|^2_2}\right]\\
			&+\alpha\mathbb{E}\left[\frac{\| S^\top B(|x^k|-|x^*|)\|^2_2}{\|S^\top A\|^2_2}\right]-\alpha\mathbb{E}\left[\frac{\|S^\top A(x^k-x^*)\|^2_2}{\|S^\top A\|^2_2}\right]\\
			=&\|x^k-x^*\|^2_2-(\alpha-\alpha^2)\|Ax^k-B|x^k|-b\|_H^2\\
			&+\alpha\| B(|x^k|- |x^*|)\|^2_H-\alpha\| A(x^k- x^*)\|^2_H
			\\
			\leq& \left(1-\alpha\left(\sigma_{n}^2(H^{\frac{1}{2}}A)-\|H^{\frac{1}{2}}B\|^2_2\right)\right)\|x^k-x^*\|^2_2
			\\&-(\alpha-\alpha^2)\|Ax^k-B|x^k|-b\|_H^2,
		\end{aligned}
	\end{equation}
	where the last inequality follows from the fact   $\| B(|x^k|- |x^*|)\|^2_H\leq\|H^{\frac{1}{2}}B\|^2_2\|x^k-x^*\|^2_2$ and $\| A(x^k- x^*)\|^2_H\geq\sigma_{n}^2(H^{\frac{1}{2}}A)\|x^k-x^*\|^2_2$. According to the supermartingale convergence lemma,  $\|x^k-x^*\|_2$ converges a.s. for every $x^*\in\mathcal{X^*}$. Thus, the sequence $\{x^k\}_{k\geq0}$ is bounded a.s., leading to the existence of accumulation points for $\{x^k\}_{k\geq0}$.  Furthermore, as $\sum_{k=1}^{\infty}\|Ax^k-B|x^k|-b\|^2_H<\infty$ a.s., it implies that $\|Ax^k-B|x^k|-b\|_H\to 0$ a.s..
	From Lemma \ref{positive}, we know that $H$ is positive definite and hence $\|Ax^k-B|x^k|-b\|_2\to 0$ a.s., i.e. $Ax^k-B|x^k|-b\to 0$ a.s.. By the continuity of the function $\|Ax-B|x|-b\|_2$, for any accumulation point $\tilde{x}^*$ of $\{x^k\}_{k\geq0}$, we have $A\tilde{x}^*-B|\tilde{x}^*|-b= 0$ a.s.. This implies $\tilde{x}^*\in\mathcal{X^*}$ a.s.. 
	
	$(\romannumeral2)$ From Theorem \ref{THM-suf}, we know that $x^*$ is the unique solution.
	By Corollary \ref{EB-2-norm}, we know that $\|Ax^k-B|x^k|-b\|_H^2\geq \left(\sigma_{n}(H^{\frac{1}{2}}A)-\|H^{\frac{1}{2}}B\|_2\right)^2\|x-x^*\|_2^2$, which together with \eqref{prove-i-0124} and $\alpha=(2-\xi)\frac{\sigma_{n}(H^{\frac{1}{2}}A)}{\sigma_{n}(H^{\frac{1}{2}}A)-\|H^{\frac{1}{2}}B\|_2}\in(0,1]$ implies
	\begin{equation}
		\label{proof-xie-2025-2-3-1}
		\mathbb{E}[\|x^{k+1}-x^*\|^2_2\mid x^k]\leq \left(1-(2-\xi)\xi\sigma_{n}^2(H^{\frac{1}{2}}A)\right)\|x^{k}-x^*\|^2_2.
	\end{equation}
	Taking the expectation over the entire history we have
	$$
	\mathbb{E}[\|x^{k+1}-x^*\|^2_2]\leq\left(1-(2-\xi)\xi\sigma_{n}^2(H^{\frac{1}{2}}A)\right) \mathbb{E}[\|x^{k}-x^*\|^2_2].
	$$
	By induction on the iteration index $k$, we can obtain the desired result.
	
	$(\romannumeral3)$ 
	Let $x_k^{*}\in \operatorname{Proj}_{\mathcal{X}^*}(x^k)$. Since 
	$$
	\operatorname{dist}^2_{\mathcal{X^*}}(x^{k+1})=\|x^{k+1}-x^*_{k+1}\|^2_2\leq\|x^{k+1}-x^*_k\|^2_2,
	$$
	by using the similar arguments as \eqref{prove-i-0124}, we have  
	\begin{equation}\label{dist-re}
		\begin{aligned}
			\mathbb{E}[\operatorname{dist}^2_{\mathcal{X^*}}(x^{k+1})\mid x^k]\leq & \left(1-\alpha\left(\sigma_{n}^2(H^{\frac{1}{2}}A)-\|H^{\frac{1}{2}}B\|^2_2\right)\right) \operatorname{dist}^2_{\mathcal{X^*}}(x^{k})
			\\&-(\alpha-\alpha^2)\|Ax^k-B|x^k|-b\|_H^2.
		\end{aligned}
	\end{equation}
	By the assumptions in this theorem and Theorem \ref{EB-general}, we know that 
	$$
	\|Ax^k-B|x^k|-b\|_H^2\geq \lambda_{\min}(H) \kappa^2\operatorname{dist}^2_{\mathcal{X^*}}(x^{k}),
	$$
	which together with \eqref{dist-re} and $\alpha=(2-\xi)\frac{\sigma_{n}^2(H^{\frac{1}{2}}A)-\|H^{\frac{1}{2}}B\|^2_2+\lambda_{\min}(H)\kappa^2}{2\lambda_{\min}(H)\kappa^2}\in(0,1]$ leads to
	$$
	\mathbb{E}[\operatorname{dist}^2_{\mathcal{X^*}}(x^{k+1})\mid x^k]\leq \left(1-(2-\xi)\xi\frac{\sigma_{n}^2(H^{\frac{1}{2}}A)-\|H^{\frac{1}{2}}B\|^2_2+\lambda_{\min}(H)\kappa^2}{4\lambda_{\min}(H)\kappa^2}\right)
	\operatorname{dist}^2_{\mathcal{X^*}}(x^{k}).
	$$
	By taking expectation over the entire history and  the induction on the iteration index $k$, we can obtain the desired result.
\end{proof}

\section{Appendix. Relationships between the standard AVE and the LCP}
\label{sec:appd2}

This section examines the equivalence between the standard AVE, where \(m=n\) and \(B=I\), and the LCP. It also provides a concise survey on this topic.  The LCP \cite{cottle2009linear} consists of finding $w,z\in\mathbb{R}^\ell$ such that
\begin{equation}
	\label{lcp-f}
	w=Qz+q, \ \ w^\top z=0,  \ \ w,z\geq 0.
\end{equation}
As a fundamental and well-studied problem in mathematical programming, the LCP has strong connections to the AVE, as demonstrated in \cite{prokopyev2009equivalent,mangasarian2007absolute,hu2010note,mangasarian2006absolute}. These problems are not only equivalent but also serve as a basis for deriving essential properties of the AVE.

\subsection{LCP as an AVE}

We introduce the reduction from \cite{mangasarian2007absolute}. Assume that \(Q-I\) is nonsingular; this assumption is general, as  \(Q\) and $q$ in \eqref{lcp-f} can be scaled by any positive scalar. The reduction is based on the substitutions \(w = |x| - x\) and \(z = |x| + x\). Thus, the LCP \eqref{lcp-f} can be expressed as
\[
|x| - x = Q|x| + Qx + q,
\]
which is equivalent to the AVE
\[
(Q-I)^{-1}(I+Q)x - |x| = (Q-I)^{-1}q.
\]

\subsection{AVE as an LCP}

There are primarily three methods to reformulate the AVE into the LCP as described in \cite{prokopyev2009equivalent,hu2010note,mangasarian2006absolute}. These reformulations are based on different mathematical formulations and assumptions, each offering certain advantages in specific contexts.

\textbf{Reformulation I:} This reformulation, proposed by Mangasarian and Meyer in \cite{mangasarian2006absolute}, is based on the assumption that \(A - I\) is nonsingular. 
We denote the positive and negative parts of $x$ by $x^+$ and $x^{-}$, respectively, which gives us
$$
x=x^+-x^{-}, \ |x|=x^++x^-, \ (x^+)^\top x^-=0, \  \ x^+,x^-\geq 0.
$$
Thus, the standard AVE takes the form
$$
A(x^+-x^-)-x^+-x^-=b, \  (x^+)^\top x^-=0, \ \ x^+,x^-\geq 0.
$$
This can then be reformulated into the LCP as 
$$
x^{+}=(A-I)^{-1}(A+I)x^{-}+(A-I)^{-1}b, \  (x^+)^\top x^-=0, \  \ x^+,x^-\geq 0.
$$

\textbf{Reformulation II:}  To eliminate the assumption on the coefficient matrix $A$  in Reformulation I, Prokopyev proposed an alternative approach in \cite{prokopyev2009equivalent}. Consider the LCP \eqref{lcp-f} with \(\ell = 3n\), where \(w = (w_1^\top, w_2^\top, w_3^\top)^\top\) and \(z = (z_1^\top, z_2^\top, z_3^\top)^\top\) with \(w_i, z_i \in \mathbb{R}^n\) for \(i = 1, 2, 3\). The standard AVE can be reformulated into \eqref{lcp-f} with
\[
Q = \begin{bmatrix}
	-I & 2I & 0 \\
	A & -A-I & 0 \\
	-A & A+I & 0
\end{bmatrix}, \quad q = \begin{bmatrix}
	0 \\
	-b \\
	b
\end{bmatrix}.
\]
If \((w^*,z^*)\) solves the related LCP, then \(x^* = z_1^* - z_2^*\) solves the standard AVE.

\textbf{Reformulation III:} It can be seen that Reformulation II increases the problem size, as the number of variables in the related LCP is three times that of the standard AVE. In \cite{hu2010note}, Hu and Huang proposed a new reformulation that neither requires assumptions on \(A\) nor increases the problem size. However, this reformulation relies on a linear independence-based index selection, as described in Algorithm \ref{LCP-index=r}.


\begin{algorithm}[htpb]
	\caption{Linear independence-based index selection}
	\label{LCP-index=r}
	\begin{algorithmic}
		\Require $P, Q \in \mathbb{R}^{m \times m}$ with $\operatorname{rank}(P+Q)=m$, $k=1$, and initialize empty sets $\mathcal{I}=\emptyset$ and $\mathcal{M}=\emptyset$.
		\begin{enumerate}
				\item[1:] If $\mathcal{M}\cup\{P_{:,k},P_{:,k+1}+Q_{:,k+1},\ldots,P_{:,m}+Q_{:,m}\}$ is linearly independent
				
						\qquad	Update $\mathcal{I}=\mathcal{I}\cup\{k\}$ and $\mathcal{M}=\mathcal{M}\cup \{P_{:,k}\}$.
						
						Otherwise 
						
						\qquad	Update $\mathcal{M}=\mathcal{M}\cup \{Q_{:,k}\}$.
			\item[2:] If $k=m$, stop and go to output. Otherwise, set $k=k+1$ and go to Step 1.
		\end{enumerate}
		\Ensure
		The index set  $\mathcal{I}$.
	\end{algorithmic}
\end{algorithm}

For the standard AVE, let the index set \(\mathcal{I}\) be generated by Algorithm \ref{LCP-index=r} with \(P = A + I\) and \(Q = I - A\). Define \(D\) as a diagonal matrix where the \(i\)-th diagonal element is \(1\) if \(i \in \mathcal{I}\) and \(-1\) otherwise. The standard AVE can then be reduced to the following LCP:
\[
w = (AD - I)^{-1}(AD + I)z + 2(AD - I)^{-1}b, \quad w^\top z = 0, \quad w, z \geq 0.
\]
Let \((w^*, z^*)\) be the solution to the reduced LCP. We define \((y^*, s^*)\) as follows:
\[
y^*_i = 
\begin{cases} 
	w^*_i, & \text{if } i \in \mathcal{I}, \\
	z^*_i, & \text{if } i \in \mathcal{I}^c,
\end{cases} 
\quad \text{and} \quad 
s^*_i = 
\begin{cases} 
	z^*_i, & \text{if } i \in \mathcal{I}, \\
	w^*_i, & \text{if } i \in \mathcal{I}^c.
\end{cases}
\]
Then \(x^* = \frac{1}{2}(y^* - s^*)\) is a solution to the standard AVE.

\end{document}